\newcommand\hrefdefaultfont{\ttfamily}
\xpatchcmd\href{\setkeys{href}{#1}}{\setkeys{href}{font=\hrefdefaultfont,#1}}{}{\fail}
\renewcommand*{\backref}[1]{}
\renewcommand*{\backrefalt}[4]{
  \ifcase #1 
  [No citations.]
  \or [#2]
  \else [#2]
  \fi }
\let\originalleft\left
\let\originalright\right
\renewcommand{\left}{\mathopen{}\mathclose\bgroup\originalleft}
\renewcommand{\right}{\aftergroup\egroup\originalright}
\newcommand{\calB}{\mathcal{B}}
\newcommand{\QQ}{\mathbb{Q}}
\newcommand{\RR}{\mathbb{R}}
\newcommand{\ZZ}{\mathbb{Z}}
\newcommand{\from}{\colon} 
\newcommand{\bdy}{\partial} 
\newcommand{\SL}{\operatorname{SL}} 
\theoremstyle{plain}
\newtheorem{XXXtheoremQED}[equation]{Theorem} 
  {\pushQED{\qed}\begin{XXXtheoremQED}}
  {\popQED\end{XXXtheoremQED}}
\newcommand{\fakeenv}{} 
 \renewcommand{\fakeenv}{#2} 
 \theoremstyle{plain} 
 \newtheorem*{\fakeenv}{#1~\ref{#2}} 
\newenvironment{restated}[2]  
{ 
 \renewcommand{\fakeenv}{#2} 
 \theoremstyle{definition} 
 \newtheorem*{\fakeenv}{#1~\ref{#2}} 
 \begin{\fakeenv}
}
{
 \end{\fakeenv}
}
\newcommand{\aut}{\mathsf{Aut}}  
\newcommand{\ch}{\mathsf{ch}}  
\newcommand{\rk}{\mathsf{rk}\,}   
\newcommand{\br}{\mathrm{br}\,}   
\newcommand{\Homv}{v} 
\newcommand{\Homw}{w} 
\newcommand{\bigonx}[1]{\stackon[1pt]{#1}{\kern0.32em\bigon}}
\title{Non-standard bi-orders on punctured torus bundles}
\author{Jonathan Johnson and Henry Segerman} 
\date{\today}
\begin{document}

\begin{abstract}
Results of Perron and Rolfsen imply that untwisted hyperbolic once-punctured torus bundles over the circle have bi-orderable fundamental groups.
They do this by showing that the action of the monodromy preserves a “standard” bi-ordering formed using the lower central series of the free group.
Here we investigate other bi-orderings that punctured torus bundle groups can have.
We show that for every such bi-ordering, the largest and second largest proper convex subgroups match the corresponding convex subgroups for a standard bi-ordering.
Moreover, if there exists a third largest convex subgroup, it must also match the third largest convex subgroup for a standard bi-ordering.
However, we also show that these groups admit non-standard bi-orderings.
\end{abstract}

\maketitle

\section{Introduction}

Much progress has been made on determining when a three-manifold group is \emph{left-orderable}, that is, having a strict total order invariant under left (but not necessarily right) multiplication.
In fact, there is a conjectured answer to when a three-manifold group is left-orderable \cite{BGW13}.
However, comparatively little is known about the symmetrized version of this property.

\begin{definition}
	\label{Def:BiOrderable}
    A group is \emph{bi-orderable} if there is a strict total order of its elements invariant under left and right multiplication.
\end{definition}

Most work on bi-orderability to date has focused on the question of which three-manifolds have bi-orderable fundamental groups.
Boyer, Rolfsen, and Wiest~\cite{BRW05} classified all Seifert fibered spaces with bi-orderable fundamental groups.
Perron and Rolfsen~\cite{PerRolf03} proved a sufficient condition for bi-orderability of the groups of fiber bundles over the circle.
Later, Clay and Rolfsen~\cite{ClaRol12} proved an obstruction for bi-orderability of these fiber bundle groups.
Linnell, Rhemtulla, and Rolfsen~\cite{LRR08} generalized Perron and Rolfsen's result, in particular to address some non-fibered manifolds.
These results have been used to show that many link complements have bi-orderable groups \cite{CDN16, John23a, John23b, KinRolf18, Yam17}.

In this paper we turn from asking which three-manifolds have bi-orderable fundamental groups to the question of classifying the bi-orders on a bi-orderable fundamental group. In particular, we look at punctured torus bundles. 
Clay, Perron, and Rolfsen's results~\cite{ClaRol12, PerRolf03} together determine that the punctured torus bundles with bi-orderable fundamental groups are the untwisted hyperbolic punctured torus bundles (\refprop{BiOrderablePuncturedTorusBundles}).
Thus for the rest of the paper we will assume that our punctured torus bundles are of this form.

The following notions to do with \emph{convexity} are key to describing the structure of bi-ordered groups.

\begin{definition}
	\label{Def:Convex}
    A subgroup $C$ of a bi-ordered group $(G,<)$ is \emph{convex with respect to $<$} if for every $a,b\in C$ and $g\in G$, when $a<g<b$ then $g\in C$.
    We call a convex subgroup $C$ \emph{maximal} if $C$ is a proper subgroup and the only convex subgroups containing $C$ are $C$ and $G$.
\end{definition}

Our first result tells us that there is no choice in the maximal convex subgroup for a bi-orderable hyperbolic punctured torus group. We can then take that maximal convex subgroup and ask what \emph{its} maximal convex subgroup is. Again there is no choice.

\begin{theorem} \label{Thm:toporder}
	Let $M$ be an untwisted hyperbolic punctured torus bundle 
	with fundamental group
	\[
		\pi_1(M)\cong G\rtimes \ZZ 
	\]
	where $G$ is the fundamental group of the punctured torus.
	Let $<$ be a bi-ordering of $\pi_1(M)$. 
	\begin{enumerate}
	    \item The maximal convex subgroup of $\pi_1(M)$ with respect to $<$ is $G$.

		\item Let $<_G$ be the induced bi-order of $<$ on $G$.
	    The maximal convex subgroup of $G$ with respect to $<_G$ is $G_2=[G,G]$.
    \end{enumerate}
\end{theorem}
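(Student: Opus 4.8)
The strategy rests on two standard facts about convex subgroups of bi-ordered groups, combined with the hyperbolicity of the monodromy. Recall that the convex subgroups of a bi-ordered group are linearly ordered by inclusion, are normal, and that the union of a chain of convex subgroups is convex; when the group is finitely generated the union $C$ of all proper convex subgroups is itself proper (otherwise finitely many proper convex subgroups would cover a generating set, and the largest of them — a single proper subgroup — would be the whole group), so $C$ is the unique maximal proper convex subgroup. Second, by Hölder's theorem $H/C$ is archimedean and hence embeds order-preservingly in $(\RR,+)$; in particular $H/C$ is abelian and torsion-free, so $[H,H]\subseteq C$ and every element of $H$ with some power in $[H,H]$ already lies in $C$. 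I will apply this to $H=\pi_1(M)$ and, for the induced order, to $H=G$.

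For (1), I would first compute $H_1(M)$. From $1\to G\to\pi_1(M)\to\ZZ\to 1$ with $G$ free of rank two, abelianizing yields $\pi_1(M)^{\mathrm{ab}}\cong(\ZZ^2/(A-I)\ZZ^2)\oplus\ZZ$, where $A\in\SL_2(\ZZ)$ is the action of the monodromy on $H_1(G)=\ZZ^2$. Since $M$ is hyperbolic, $A$ has no eigenvalue on the unit circle; in particular $1$ is not an eigenvalue, so $\det(A-I)\neq 0$ and $T:=\ZZ^2/(A-I)\ZZ^2$ is finite. Hence the torsion subgroup of $\pi_1(M)^{\mathrm{ab}}$ is exactly $T$, and its preimage in $\pi_1(M)$ is $G$ (the kernel of $\pi_1(M)\to\ZZ$). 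Every element of $G$ has a power in $[\pi_1(M),\pi_1(M)]$, so $G\subseteq C$. If $G\subsetneq C$, then $C/G$ is a nonzero subgroup of $\ZZ\cong\pi_1(M)/G$, so $C$ has finite index in $\pi_1(M)$; but $\pi_1(M)/C$ embeds in $\RR$ and cannot be a nontrivial finite group, forcing $C=\pi_1(M)$, a contradiction. Thus $C=G$, which is statement (1).

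For (2), note that $<_G$ is simply the restriction of $<$, so this part is logically independent of (1). Conjugation by the stable letter $t$ is the monodromy $\phi\in\Aut(G)$, and since $<$ is bi-invariant and $G$ normal, $\phi$ is an order-automorphism of $(G,<_G)$; it therefore induces a poset automorphism of the chain of convex subgroups and so fixes the unique maximal proper convex subgroup $C_G$. Since $[G,G]$ is characteristic, $\phi$ also fixes it, hence acts on the subgroup $C_G/[G,G]$ of $\ZZ^2=G/[G,G]$, and this action is precisely $A$. Now $[G,G]\subseteq C_G\subsetneq G$, so $C_G/[G,G]$ is a proper subgroup of $\ZZ^2$ of one of the following kinds, all ruled out except the last: rank two (then $G/C_G$ is a nontrivial finite subgroup of $\RR$, impossible); rank one but not a direct summand (then $G/C_G$ has torsion, again impossible); rank one and a direct summand (then $A$ preserves a rank-one summand of $\ZZ^2$, hence has an integer eigenvalue, contradicting that a hyperbolic element of $\SL_2(\ZZ)$ has irrational eigenvalues); or the trivial subgroup, which gives $C_G=[G,G]=G_2$.

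The geometric heart is the single fact that a hyperbolic monodromy acts on $H_1(G)=\ZZ^2$ with no invariant rank-one summand (no rational eigenvalue); everything else is organizational. Accordingly I expect the main obstacle to be bookkeeping rather than ideas: one must check that each quotient genuinely carries an induced bi-order — which is why the argument is framed around the canonically-defined maximal proper convex subgroup rather than around $G_2=[G,G]$ directly — and one must get the $H_1$ computation and the identification of the preimage of its torsion subgroup with $G$ exactly right.
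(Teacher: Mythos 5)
Your proof is correct and follows essentially the same strategy as the paper's Propositions~\ref{Prop:MaxConvexPi1M} and~\ref{Prop:MaxConvexG}: H\"older's theorem plus finite generation produce the unique maximal convex subgroup with torsion-free abelian quotient, and the geometric input is the same --- $\det(A-I)\neq 0$ forces $\rk H_1(M;\ZZ)=1$ in part~(1), and hyperbolicity of the monodromy forbids an $A$-invariant rank-one subgroup of $\ZZ^2$ in part~(2). The organization differs only slightly (you identify $G$ as the preimage of the torsion subgroup of $H_1(M;\ZZ)$ rather than invoking uniqueness of the surjection to $\ZZ$, and you spell out the case analysis on $C_G/[G,G]$ rather than reducing immediately to $G/C_G\cong\ZZ$ or $\ZZ^2$), but the mathematical content is identical.
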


A maximal convex subgroup is normal (\refprop{FiniteGeneratorMaxConvex}), and it follows from a theorem of H\"older that the quotient is abelian~\cite{Holder1901}. 
Thus the ``most significant digit'' of any such bi-order is given by a bi-ordering of this finitely generated free abelian group; such bi-orders are well understood.

One might ask if the pattern of maximal convex subgroups being determined continues. 
Our next result tells us that if $G_2$ has a maximal convex subgroup then, again, there is no choice.

\begin{theorem} \label{Thm:G3Convex}
	Let $M$ be an untwisted hyperbolic punctured torus bundle 
	with fundamental group
	\[
		\pi_1(M)\cong G\rtimes_h \ZZ 	
	\]
	where $G$ is the fundamental group of the punctured torus.
	Let $<$ be a bi-ordering of $\pi_1(M)$.
	Let $<_2$ be the induced bi-order of $<$ on $G_2=[G,G]$.
	Suppose that $C$ is a maximal convex subgroup of $G_2$ with respect to $<_2$.
	Then $C$ is $G_3=[G_2,G]$.
\end{theorem}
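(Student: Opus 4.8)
The structure should parallel the proof of Theorem \ref{Thm:toporder}(2), but one level deeper in the lower central series. The key facts I'd want in hand: $G$ is free of rank 2, so $G_2 = [G,G]$ is free of countably infinite rank, $G_3 = [G_2, G]$ is the next term of the lower central series of $G$, and the quotient $G_2/G_3$ is free abelian (this is the degree-2 part of the associated graded Lie ring of the free group $G$, which is free abelian of rank 1 when $\rk G = 2$). The monodromy $h$ acts on $G$, hence on each $G_k/G_{k+1}$; on $G/G_2 \cong \ZZ^2$ it acts by some matrix $A \in \mathrm{GL}_2(\ZZ)$, and on $G_2/G_3 \cong \ZZ$ it acts by $\det A = \pm 1$. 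The "untwisted" hypothesis should force $\det A = 1$ (orientation of the fiber), so $h$ acts trivially on $G_2/G_3$.

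**Main line of argument.** Let $C$ be a maximal convex subgroup of $(G_2, <_2)$. By \refprop{FiniteGeneratorMaxConvex}-type reasoning $C$ is normal in $G_2$ with abelian quotient. First I would establish $G_3 \subseteq C$: since $G_2/C$ is abelian, $C$ must contain $[G_2, G_2]$; I'd then need to see it contains all of $G_3 = [G_2, G]$, not just $[G_2, G_2]$. For this the crucial input is that the whole configuration ($<_2$ and $C$) must be compatible with the $\ZZ$-action: the bi-order $<$ on $\pi_1(M) = G \rtimes_h \ZZ$ restricts to $<_2$ on $G_2$, and conjugation by the stable letter is an order-automorphism of $(G_2, <_2)$. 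Convex subgroups of a bi-ordered group are linearly ordered by inclusion and are permuted by any order-automorphism; since $h$ acts on the (at most countable) chain of convex subgroups preserving inclusion, and the chain below $C$ is well-understood, $C$ itself must be $h$-invariant (a maximal proper convex subgroup is characterized order-theoretically, so it's fixed setwise). Then $C \normal G$, so $C \supseteq [C, G]$, and combined with $C \supseteq [G_2,G_2]$ I get $C \supseteq [G_2,G_2]\cdot[G_2,G] $; I'd want this to equal $G_3$ — indeed $[G_2,G] \supseteq [G_2,G_2]$, so $C \supseteq G_3$.

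**Getting the reverse inclusion $C \subseteq G_3$.** This is where the real work is, and I expect it to be the main obstacle. Having shown $G_3 \subseteq C$, I'd pass to the quotient: $C/G_3$ is a convex subgroup of $G_2/G_3 \cong \ZZ$ with respect to the induced order, and every proper subgroup of $\ZZ$ that is convex in a bi-order of $\ZZ$ is trivial. Hence $C/G_3$ is either trivial (giving $C = G_3$, done) or all of... no — if $C/G_3 = G_2/G_3$ then $C = G_2$, contradicting properness. The subtlety: I need $C/G_3$ to genuinely be convex in $G_2/G_3$ with the \emph{induced} order, which requires knowing $C \supseteq G_3$ (just shown) and that the induced order on $G_2/G_3$ is a genuine bi-order — this uses that $G_3$ is itself convex in $(G_2, <_2)$. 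That last point is the load-bearing claim: \emph{$G_3$ is convex with respect to $<_2$}. To prove it I'd argue that the restriction of $<$ to $\pi_1(M)$ already forces $G_k$ to be convex at each level by an inductive "domination" argument on the lower central series — elements of $G_{k+1}$ are infinitesimally small compared to elements of $G_k \setminus G_{k+1}$, because commutators $[g_2, g]$ with $g_2 \in G_2$ and appropriate positivity are squeezed; the archimedean-type comparison here is exactly the kind of estimate used to prove Theorem \ref{Thm:toporder}. Concretely I'd want a lemma: \emph{in any bi-order on a group $\Gamma$ in which the monodromy-type automorphism acts ``expansively enough'', each lower central term $\Gamma_k$ is convex}. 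If such a statement is available (or provable via the Perron--Rolfsen standard-order machinery the paper has set up), then both inclusions follow and $C = G_3$.

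**Where it could go wrong.** The fragile step is showing $G_3$ is convex — $a priori$ a wild bi-order on $G_2$ need not respect the lower central series at all, and the only leverage is the $\ZZ$-action. So the proof really must exploit that $<_2$ extends to $\pi_1(M)$ and that $h$ has hyperbolic (Anosov) dynamics on $G/G_2$ — the same dynamical input that gives Theorem \ref{Thm:toporder}(2). I'd structure the argument to isolate that convexity-of-$G_3$ claim as a standalone lemma, prove it using the hyperbolicity of the monodromy exactly as in the proof of \refthm{toporder}, and then finish with the short quotient argument above.
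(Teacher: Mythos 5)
There is a genuine gap. Your argument for $G_3 \subseteq C$ is where things break. You observe (correctly) that $C$ is preserved under conjugation by all of $G$, hence normal in $G$, so $[C,G] \subseteq C$; and since $G_2/C$ is abelian, $[G_2,G_2] \subseteq C$. But from these two facts you cannot conclude $[G_2,G] \subseteq C$: the containment $[C,G]\subseteq C$ controls commutators of $C$ with $G$, not commutators of all of $G_2$ with $G$, and $[G_2,G_2]$ is strictly smaller than $[G_2,G]$. In fact, $G_2/([G_2,G_2]\cdot C)$ being abelian tells you nothing about elements of the form $[g_2, g]$ with $g_2\in G_2\setminus C$ and $g\in G\setminus G_2$. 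This is precisely the hard part of the theorem, and a purely group-theoretic observation about normality cannot resolve it.

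The paper's mechanism for $G_3 \subseteq C$ is substantially different and is the real content. Since $C$ is convex and maximal, $G_2/C$ has no proper nontrivial convex subgroups, so H\"older's theorem gives an order-embedding $f\from G_2/C \hookrightarrow \RR$. Composing with the abelianization $p_2\from G_2 \to H_1(\hat T;\ZZ)$ gives a homomorphism $\mu\from H_1(\hat T;\ZZ)\to\RR$, where $\hat T$ is the abelian cover with $\pi_1(\hat T) \cong G_2$. One identifies the basis $\calB$ of $H_1(\hat T;\ZZ)$ with boundary circles $\gamma_{m,n}$ indexed by $\ZZ^2$, then argues: (i) order-preserving automorphisms of $(G_2,<_2)$ that fix $C$ preserve not just the sign but the ratios of $\mu$-values (an archimedean squeeze in $\RR$); (ii) conjugation by $\alpha,\beta$ acts as shifts, forcing $\mu(\gamma_{m,n}) = \mu(\gamma_{0,0})a^m b^n$ for some $a,b>0$; (iii) invariance under $\hat h_+$ forces $\mu$ constant on orbits, and since $h$ is pseudo-Anosov those orbits contain three non-collinear points; (iv) a function of the form $c\,a^m b^n$ constant at three non-collinear points is constant. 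Once $\mu$ is constant on $\calB$, a direct computation with the shift action shows every generator $[g,x]$ of $G_3$ lies in $\ker(\mu\circ p_2) = C$.

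On the reverse inclusion, you actually do not need the claim that $G_3$ is convex. Once $G_3\subseteq C\subsetneq G_2$, use $G_2/G_3\cong\ZZ$: then $G_2/C$ is a nontrivial quotient of $\ZZ$ which is also bi-orderable (because $C$ is convex), hence torsion-free, hence all of $\ZZ$; so $C/G_3$ is trivial and $C=G_3$. That short quotient argument replaces your intermediate ``$G_3$ is convex'' lemma, which you correctly flagged as the fragile step but did not prove. Your intuition that the pseudo-Anosov dynamics is the essential input was right, but the way it enters is through constraining the H\"older map $\mu$, not through any coarse normality or ``infinitesimal domination'' argument.
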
 

In \refthm{toporder} the groups $\pi_1(M)$ and $G$ must have maximal convex subgroups because they are finitely generated (\refprop{FiniteGeneratorMaxConvex}).
However, $G_2$ is not finitely generated, leaving open the possibility that it fails to have a maximal convex subgroup.

In \refsec{nonstandard}
we build bi-orders on $\pi_1(M)$ for which $G_2$ does not have a maximal convex subgroup (\refcor{NoMaximalConvex}).
Our examples are not obtainable by existing construction methods in the literature.
Perron and Rolfsen~\cite{PerRolf03} show that untwisted hyperbolic punctured torus bundles are bi-orderable by showing that the monodromy of each of these bundles preserves at least one bi-ordering of the free group.
These bi-orderings are \emph{standard}, meaning that every term of the lower central series is convex (although each term may not be maximal in the previous term); see \refdef{Standard} and \refprop{StandardConvexDef} for the details.
In a natural extension of this terminology, we say that the fundamental group of a punctured torus bundle is \emph{standard} if $h_*$ preserves a standard bi-ordering on the free group $G$.

\begin{theorem} \label{Thm:nonstandard}
Let $M$ be an untwisted hyperbolic punctured torus bundle.
Then $\pi_1(M)$ admits a non-standard bi-ordering.
\end{theorem}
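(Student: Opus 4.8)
The plan is to produce, for each untwisted hyperbolic punctured torus bundle $M$, an explicit bi-ordering of $\pi_1(M) \cong G \rtimes_h \ZZ$ that fails to be standard, i.e., for which some term of the lower central series of $G$ is \emph{not} convex. By \refthm{toporder} and \refthm{G3Convex} we know the first three convex subgroups in any bi-order are forced to be $G$, $G_2$, and (if a maximal convex subgroup of $G_2$ exists) $G_3$; so the place to look for non-standard behavior is \emph{inside} $G_2$, either by arranging that $G_2$ has no maximal convex subgroup at all, or by arranging that whatever convex subgroups it does have do not line up with $G_3, G_4, \dots$. The most efficient route is to invoke the construction of \refsec{nonstandard} directly: there we build bi-orders on $\pi_1(M)$ for which $G_2$ has no maximal convex subgroup (\refcor{NoMaximalConvex}). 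A standard bi-ordering, by \refdef{Standard}/\refprop{StandardConvexDef}, has every $G_n$ convex; in particular $G_3 = [G_2,G]$ would be a convex subgroup of $G_2$, and since $G_2/G_3$ is finitely generated abelian, $G_3$ would in fact be contained in a maximal convex subgroup of $G_2$. Hence a bi-order on $\pi_1(M)$ whose restriction to $G_2$ admits no maximal convex subgroup cannot be standard, and \refcor{NoMaximalConvex} supplies exactly such a bi-order.

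So the steps, in order, are: (1) recall the construction from \refsec{nonstandard} producing a bi-ordering $<$ on $\pi_1(M)$ with $G_2$ having no maximal convex subgroup with respect to $<_2$; (2) observe that if $<$ were standard then each $G_n$ would be $<_2$-convex in $G_2$, and that $G_3 \normal G_2$ with $G_2/G_3$ finitely generated abelian, so by H\"older-type reasoning the convex subgroups of $G_2$ containing $G_3$ would be totally ordered with $G_3$ among them — but the collection of convex subgroups of $G_2$ would then have a largest proper element, namely the largest convex subgroup of $G_2/G_3$ pulled back, contradicting (1); (3) conclude $<$ is non-standard. One subtlety to handle carefully in step (2): "standard" is a property of a bi-ordering of the free group $G$ that is preserved by $h_*$, whereas the bi-order we produce on $G$ is the restriction $<_G$ of a bi-order on $\pi_1(M)$; I should make explicit that $<_G$ is automatically $h_*$-invariant (since conjugation by the stable letter realizes $h_*$ and bi-orders are conjugation-invariant), so asking whether $\pi_1(M)$ is standard is the same as asking whether $<_G$ is a standard bi-order of $G$.

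The main obstacle is entirely contained in \refsec{nonstandard}: constructing a concrete bi-order on $\pi_1(M)$, uniformly over all such bundles, whose restriction to $G_2$ has no maximal convex subgroup. This requires understanding the $h_*$-action on $G_2$ well enough to build an $h_*$-invariant bi-order on $G$ that refines the standard one "past" $G_3$ in a way that produces an infinite descending-without-bottom (or ascending-without-top) family of convex subgroups of $G_2$; the linear algebra of the monodromy on $G_2/G_3, G_3/G_4, \dots$ (equivalently, on the associated graded of the lower central series, a free Lie algebra on two generators acted on by $\mathrm{SL}_2(\ZZ)$) is where the work lies. Once \refcor{NoMaximalConvex} is in hand, the deduction of \refthm{nonstandard} above is immediate and formal.
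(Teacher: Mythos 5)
Your proposal uses the same underlying construction as the paper (the positive cone $P$ built in \refsec{nonstandard} from $Q_1, Q_2, Q_3$) but concludes non-standardness by a different final step. The paper's proof is more direct: it cites \reflem{G3NotConvex}, which exhibits two elements $y, z$ in the same nontrivial coset of $G_3$ with $y \in P$ and $z \in P^{-1}$, so $G_3$ is not convex; \refprop{StandardConvexDef} then immediately gives that $P$ is non-standard. You instead invoke \refcor{NoMaximalConvex} (that $G_2$ has no maximal convex subgroup under $<_2$) and argue that a standard order would be inconsistent with this, since $G_3$ would be convex in $G_2$, and as $G_2/G_3 \cong \ZZ$ (\reflem{G2G3Z}) is finitely generated and nontrivial, \refprop{FiniteGeneratorMaxConvex} together with \refprop{NestedConvex} would force $G_2$ to have a maximal convex subgroup. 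That deduction is sound, and you correctly flag that $<_G$ is $h_*$-invariant because $h_*$ is realized by conjugation in $\pi_1(M)$. The one caveat is logical ordering: \refcor{NoMaximalConvex} is stated in the paper \emph{after} and \emph{as a consequence of} \refthm{nonstandard} (via \reflem{G3NotConvex} and \refthm{G3Convex}), so to avoid apparent circularity you should rely on the corollary's alternative direct proof, namely the increasing chain of convex subgroups $C_\gamma$ (indexed by $\gamma \in \calB$) that exhausts $G_2$. In effect your route substitutes that explicit chain of convex subgroups for the explicit pair $y, z$ of \reflem{G3NotConvex}; both come from the same positive cone $Q_2$ of \refdef{Q2}, so the two proofs are equal in substance, with the paper's being a few steps shorter.
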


\subsection{Bi-orders and eigenvalues}

Here we give some relevant context to our results and discuss possible future directions.

In this section, let $M$ be a fiber bundle over the circle whose fiber is a once punctured surface $S$ (not necessarily a genus one surface), and  let $h$ be the monodromy of this bundle.
As in the genus one case, $\pi_1(M)$ factors into a semidirect product $\pi_1(M)\cong G\rtimes_{h_*}\ZZ$ where $G\cong \pi_1(S)$ is a free group of rank twice the genus of $S$.
Thus there is a natural map $\pi_1(M)\to\ZZ$ obtained by projecting $G\rtimes_h \ZZ$ onto the $\ZZ$ factor.
Therefore a bundle over the circle has a canonical Alexander polynomial $\Delta_M(t)$.
Perron and Rolfsen, and Clay and Rolfsen prove the following.

\begin{theorem}[\cite{PerRolf03}, Theorem 1.1] 
\label{Thm:PerRolf}
	If all the roots of $\Delta_M(t)$ are real and positive then $\pi_1(M)$ admits a standard bi-ordering.
\end{theorem}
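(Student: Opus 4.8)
The plan is to run the argument of Perron and Rolfsen: build an $h_*$-invariant standard bi-ordering of the free group $G$ one graded piece at a time along the lower central series, and then extend it over the $\ZZ$ factor. Write $G = G_1 \geq G_2 \geq \cdots$ with $G_{n+1} = [G_n,G]$. Each $G_n$ is characteristic, hence invariant under $h_*$, so $h_*$ descends to an automorphism $\bar h_n$ of the finitely generated free abelian group $Q_n := G_n/G_{n+1}$; in degree one $\bar h_1$ is the monodromy acting on $G^{\mathrm{ab}} = H_1(S;\ZZ)$. Because $M$ is fibered with fiber $S$, the Alexander module of $M$ is $H_1(S;\ZZ)$ with the generator of $\ZZ$ acting as $\bar h_1$, so up to units $\Delta_M(t) = \det(tI - \bar h_1)$; thus the roots of $\Delta_M$ are exactly the eigenvalues of $\bar h_1$ on $H_1(S;\CC)$. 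Since free groups are residually nilpotent, $\bigcap_n G_n = \{1\}$, so every $g \neq 1$ lies in a unique top term $G_{n(g)} \setminus G_{n(g)+1}$.

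The first ingredient I would isolate is a linear-algebra lemma: if $A \in \operatorname{GL}_k(\ZZ)$ has all of its eigenvalues real and strictly positive, then $A$ preserves a bi-ordering of $\ZZ^k$. Having all eigenvalues real, $A$ is conjugate over $\RR$ to an upper-triangular matrix $U$ with its eigenvalues down the diagonal; $U$ preserves the standard flag of $\RR^k$ and acts on each one-dimensional successive quotient by the corresponding positive eigenvalue, so the reverse-lexicographic order along that flag (with any choice of orientations) is a translation-invariant total order preserved by $U$. Transporting this order back through the conjugation gives an $A$-invariant translation-invariant total order on $\RR^k$, which restricts to a bi-ordering of the sublattice $\ZZ^k$.

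Next I would propagate the hypothesis up the central series. The iterated left-normed commutator map $(G^{\mathrm{ab}})^{\otimes n} \to Q_n$ is a surjective homomorphism of abelian groups, equivariant for the diagonal action of $\bar h_1$ on the source and the action of $\bar h_n$ on the target. Tensoring with $\CC$, every eigenvalue of $\bar h_n$ on $Q_n \otimes \CC$ is an eigenvalue of $\bar h_1^{\otimes n}$ on $H_1(S;\CC)^{\otimes n}$, hence a product $\lambda_{i_1}\cdots\lambda_{i_n}$ of $n$ roots of $\Delta_M$. Under the hypothesis each $\lambda_i$ is real and positive, so every such product — and therefore every eigenvalue of $\bar h_n$ — is real and positive. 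By the lemma, for each $n$ I may choose a bi-ordering $<_n$ of $Q_n$ preserved by $\bar h_n$.

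Finally I would assemble the pieces. Declaring, for $g \neq 1$, that $g >_G 1$ exactly when the image of $g$ in $Q_{n(g)}$ is $<_{n(g)}$-positive defines a bi-ordering of $G$ in which every $G_n$ is convex — a standard bi-ordering (\refdef{Standard}, \refprop{StandardConvexDef}) — and it is preserved by $h_*$ since $h_*$ fixes each $G_n$ setwise and acts on $Q_n$ by the $<_n$-preserving map $\bar h_n$. Extending lexicographically over $\pi_1(M) \cong G \rtimes_{h_*} \ZZ$ with the $\ZZ$ factor on top is bi-invariant precisely because $<_G$ is invariant under $h_*$ and under all inner automorphisms of $G$, and the resulting bi-ordering of $\pi_1(M)$ restricts to the standard bi-ordering $<_G$ on $G$; so $\pi_1(M)$ admits a standard bi-ordering. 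The crux — the place where the hypothesis actually does work — is the third paragraph: recognizing that the higher graded pieces $Q_n$ carry only eigenvalues of the form $\lambda_{i_1}\cdots\lambda_{i_n}$, which rests on the free-nilpotent (Magnus--Witt) structure of the quotients $G/G_{n+1}$; everything else is bookkeeping with ordered groups and flags.
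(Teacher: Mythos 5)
The paper does not reprove this result; it cites Perron--Rolfsen \cite{PerRolf03}, and your write-up is essentially a correct reconstruction of their argument. Your overall structure matches theirs: (i) a linear-algebra lemma that a matrix in $\operatorname{GL}_k(\ZZ)$ with all eigenvalues real and positive preserves a bi-ordering of $\ZZ^k$ (your flag argument handles non-diagonalizable matrices correctly, since an upper-triangular $U$ with positive diagonal preserves a flag-wise lexicographic order); (ii) propagation of the positive-spectrum hypothesis to every lower central series quotient $Q_n$; and (iii) the assembly of a standard bi-order on $G$, conjugation-invariant because each $Q_n$ is central in $G/G_{n+1}$, and then the lexicographic extension over $G \rtimes_{h_*}\ZZ$ as in \refprop{BOInvariantOrder}. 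The one place where you differ from the tool the paper actually references is step (ii): you use the surjective iterated-commutator map $(G^{\mathrm{ab}})^{\otimes n} \twoheadrightarrow Q_n$, whereas the paper (in the proof of \refprop{PreserveStandardOrders}) quotes Perron--Rolfsen's Lemma 4.5, which gives canonical \emph{injections} $\iota_n\from A_n \hookrightarrow A_1^{\otimes n}$. Either direction transfers the spectral hypothesis, since for an equivariant surjection the characteristic polynomial of the target divides that of the source, and for an equivariant injection the same divisibility holds; so the two variants buy the same conclusion, and your surjection is arguably the more elementary input since it needs only that weight-$n$ basic commutators generate $G_n$ modulo $G_{n+1}$. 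One small omission you should make explicit: the induced order on each $Q_n$ must be invariant under conjugation by all of $G$ (not just under $\bar h_n$) for the assembled positive cone on $G$ to be conjugation-invariant; this holds automatically because $G_n/G_{n+1}$ is central in $G/G_{n+1}$, but it is worth saying.
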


\begin{theorem}[\cite{ClaRol12}, Theorem 1.1]
\label{Thm:ClayRolf}
	If $\pi_1(M)$ is bi-orderable then $\Delta_M(t)$ has at least one real and positive root.
\end{theorem}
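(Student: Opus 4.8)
The plan is to reduce the statement to a linear-algebra fact about the monodromy acting on the homology of the fiber, exploiting that bi-orderings are conjugation invariant. Write $\pi_1(M)\cong G\rtimes_{h_*}\ZZ$ with $G=\pi_1(S)$ free of rank $n$, let $t$ be a generator of the $\ZZ$ factor so that $h_*(g)=tgt^{-1}$, and let $A\colon\ZZ^n\to\ZZ^n$ be the automorphism induced by $h_*$ on the abelianization $H_1(S;\ZZ)\cong\ZZ^n$ of $G$. Since the infinite cyclic cover associated to the projection $\pi_1(M)\to\ZZ$ is homotopy equivalent to the fiber $S$, the Alexander module is $\ZZ[t^{\pm1}]^n/(tI-A)\ZZ[t^{\pm1}]^n$, and so $\Delta_M(t)$ agrees, up to a unit of $\ZZ[t^{\pm1}]$, with the characteristic polynomial $\det(tI-A)$. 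Its roots are precisely the eigenvalues of $A$, so it is enough to prove that if $\pi_1(M)$ is bi-orderable then $A$ has a positive real eigenvalue.

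Suppose $<$ is a bi-ordering of $\pi_1(M)$ and let $<_G$ be its restriction to $G$. In any bi-ordered group the positive cone is closed under conjugation (if $1<g$, multiply on the left by $x$ and then on the right by $x^{-1}$ to get $1<xgx^{-1}$); since $h_*$ is conjugation by $t$, it preserves the positive cone of $(G,<_G)$, i.e.\ $h_*$ is an order automorphism of $(G,<_G)$. As $G$ is finitely generated, $(G,<_G)$ has a maximal proper convex subgroup $C$, which is normal in $G$ by \refprop{FiniteGeneratorMaxConvex}, and $C$ is unique because the convex subgroups of a bi-ordered group are totally ordered by inclusion. The order automorphism $h_*$ carries convex subgroups to convex subgroups and so fixes the unique maximal one, $h_*(C)=C$; hence $h_*$ descends to an order automorphism $\bar h$ of the bi-ordered quotient $G/C$. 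The convex subgroups of $G/C$ correspond to the convex subgroups of $G$ lying between $C$ and $G$, namely only $C$ and $G$, so $G/C$ has no proper nontrivial convex subgroups; it is therefore archimedean, and by H\"older's theorem~\cite{Holder1901} it embeds, order-preservingly, in $(\RR,+)$ via an injective homomorphism $\iota$. In particular $G/C$ is abelian, hence a finitely generated quotient of $\ZZ^n$, and torsion-free (bi-orderable groups have no torsion), so $G/C\cong\ZZ^m$ with $1\le m\le n$.

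To extract the eigenvalue, tensor with $\RR$: regard $\iota$ as a nonzero functional in $(\RR^m)^*$, write $\bar h^{*}\iota=\iota\circ\bar h$ for its transpose, and note that the positive cone of $G/C\cong\ZZ^m$ is $P=\{v\in\ZZ^m:\iota(v)>0\}$. Since $\bar h$ preserves the order we have $\bar h(P)=P$, hence $\{v\in\QQ^m:(\bar h^{*}\iota)(v)>0\}=\{v\in\QQ^m:\iota(v)>0\}$; passing to closures in $\RR^m$, the functionals $\bar h^{*}\iota$ and $\iota$ cut out the same closed half-space, so $\bar h^{*}\iota=c\,\iota$ for some real $c>0$. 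Thus $c$ is an eigenvalue of $\bar h^{*}$, and hence of $\bar h$. Finally, the surjection $\ZZ^n\to G/C$ intertwines $A$ with $\bar h$, so its kernel $K$ is $A$-invariant and $A$ acts on $\ZZ^n/K\cong\ZZ^m$ as $\bar h$; over $\QQ$ this makes $\det(tI-A)$ the product of the characteristic polynomial of $A$ on $K\otimes\QQ$ with that of $\bar h$ on $\QQ^m$. Therefore $c>0$ is a root of $\det(tI-A)$, and so of $\Delta_M(t)$.

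The step I expect to require the most care is the passage to the archimedean quotient together with the identification of $\bar h$: one must verify that the maximal convex subgroup is respected by the order automorphism $h_*$ (which rests on its uniqueness), and that an archimedean bi-ordering of $\ZZ^m$ is determined by its defining functional up to a positive real factor, so that $h_*$-invariance of the positive cone becomes exactly the eigenvalue relation $\bar h^{*}\iota=c\,\iota$. The remaining ingredients --- conjugation invariance of the positive cone, H\"older's theorem, existence of a maximal convex subgroup in a finitely generated bi-orderable group, and multiplicativity of the characteristic polynomial along an $A$-equivariant short exact sequence of finite-dimensional $\QQ$-vector spaces --- are standard.
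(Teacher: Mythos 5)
This theorem is stated in the paper as a citation to Clay--Rolfsen \cite{ClaRol12}; the paper itself gives no proof, so there is no ``paper's own proof'' to compare against directly. Your argument is correct, and it is in essence a reconstruction of the Clay--Rolfsen argument: restrict the bi-order from $\pi_1(M)$ to the fiber group $G$, note $h_*$ preserves it, pass to the unique maximal (hence normal, $h_*$-invariant) proper convex subgroup $C$ using \refprop{FiniteGeneratorMaxConvex} and \refprop{NestedConvex}, embed $G/C$ into $(\RR,+)$ via H\"older's theorem, and then extract a positive eigenvalue $c$ of the induced map on $G/C$; since $G/C$ is a quotient of $H_1(S;\ZZ)$ by an $A$-invariant sublattice, $c$ is a positive real root of $\det(tI-A)\doteq\Delta_M(t)$. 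The only step that merits a second look is the passage from equality of positivity sets over $\QQ^m$ to the scalar relation $\bar h^*\iota=c\iota$; your closure argument is fine, though it reads more cleanly if phrased as: the two nonzero functionals have the same kernel (since $\iota(v)=0$ iff neither $v$ nor $-v$ is positive, and likewise for $\bar h^*\iota$) and the same sign on any fixed $v$ outside that kernel, hence are positive multiples of one another. With that cosmetic remark aside, the proof is sound and gives what the cited theorem asserts.
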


\begin{remark}
	These results are stated for fibered knot complements but the proofs can be applied to arbitrary fiber bundles over the circle without any alterations.
\end{remark}

\refthm{PerRolf} is a sufficient but not necessary condition for $\pi_1(M)$ to admit a standard bi-ordering.
However, to know whether or not $\pi_1(M)$ admits a standard bi-ordering we only need to investigate the induced map $h_+\from H_1(S;\ZZ)\to H_1(S;\ZZ)$. (See \refprop{PreserveStandardOrders} for more details.)

\begin{question} \label{Que:BOHplus}
	Suppose that $S$ is a once-punctured surface, and
	suppose that $M$ and $M'$ are $S$ bundles over the circle with monodromies $h$ and $h'$ respectively. 
	Is it possible that $\pi_1(M')$ is bi-orderable, 
	$\pi_1(M)$ is not bi-orderable, but in homology we have that $h_+=h_+'$? 
\end{question}

\begin{remark}
If we allow $S$ to have multiple punctures in \refque{BOHplus} then the answer is ``yes''.
In particular, let $\br(\beta)$ be the link in $S^3$ formed by the closure of a braid $\beta$ along with the braid axis.
Let $M$ and $M'$ be the exteriors of $\br(\sigma_1\sigma_2\sigma_1)$ and $\br(\sigma_1\sigma_2\sigma_1^{-1})$ respectively.
The manifolds $M$ and $M'$, pictured in \reffig{TwoBundles}, are $S$ bundles over the circle where $S$ is a 4-punctured sphere.
For both bundles, the induced maps $h_+, h_+'\from H_1(S;\ZZ)\to H_1(S;\ZZ)$ are described by the following matrix:
\[
	h_+=h_+'=\left(\begin{array}{ccc} 0 & 0 & 1 \\ 0 & 1 & 0 \\ 1 & 0 & 0 \end{array}\right)
\]
Kin and Rolfsen showed that $\pi_1(M)$ is bi-orderable \cite[Theorem 4.10]{KinRolf18} but $\pi_1(M')$ is not bi-orderable \cite[Proposition 4.4, Corollary 4.15]{KinRolf18}.
\end{remark}

\begin{figure}[t]
\subfloat[$M$]
{
\includegraphics[scale=1]{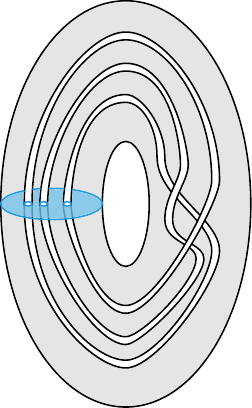}
}
\quad
\subfloat[$M'$]{
\includegraphics[scale=1]{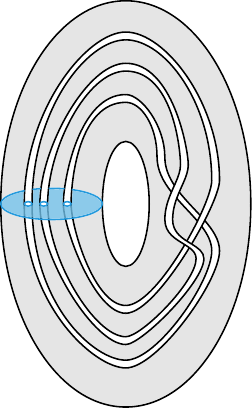}
}
\caption{Bundles over the circle}
\label{Fig:TwoBundles}
\end{figure}

By \refprop{PreserveStandardOrders}, an example answering Question \ref{Que:BOHplus} in the affirmative would have to involve a non-standard bi-ordering.
\refthm{nonstandard} confirms that non-standard bi-orderings exist, although in our examples $M$ is hyperbolic and $S$ is of genus one, a setting in which the answer to Question \ref{Que:BOHplus} is known to be ``no''. (See \refprop{BiOrderablePuncturedTorusBundles}: $h_+$ determines whether or not $M$ is untwisted.)

A negative answer to Question \ref{Que:BOHplus} would have to arise from restrictions that the map on homology $h_+$ places on the non-standard bi-orderings of $\pi_1(M)$. In our examples, $h_+$ puts severe restrictions on what is possible. One might expect the same to be true in higher genus.

\begin{question}
	What other non-standard bi-orderings of a hyperbolic punctured torus (or other surface) bundle group are there beyond those generated by our construction?
\end{question}

\subsection{Structure of the paper}

We begin with an exposition of the interplay between bi-orderings of groups and their convex subgroups in \refsec{BOsandConvex}.
In \refsec{PuncTorusBundles} we recall the definition and some results about punctured torus bundles.
\refsec{G2MaxConvex} focuses on the restrictions that the monodromy imposes on the types of bi-orderings which can be put on the rank 2 free group.
Theorems \ref{Thm:toporder} and \ref{Thm:G3Convex} are proved in \refsec{ProofOfTopOrder} and \refsec{ProofOfG3Convex} respectively.
We describe non-standard bi-ordering of a hyperbolic punctured torus bundle group in \refsec{nonstandard} and so prove \refthm{nonstandard}.

\subsection{Acknowledgments}

The authors thank Danny Calegari and Neil Hoffman for helpful conversations.
The authors were supported in part by National Science Foundation grants DMS-2213213 and DMS-2203993 respectively.

\section{Bi-Orders and Convex Subgroups} \label{Sec:BOsandConvex}

In this section, we state some preliminary facts about bi-orderings and convex subgroups of bi-ordered groups.
In particular, we explore the interplay between the bi-orderings of groups $G$ and $H$ related by a surjective homomorphism $f\from G \to H$.
For a general reference for many of these results see Section 2.2 of Clay and Rolfsen's book~\cite{ClayRolf16}.
We start by stating an alternative way to describe a bi-ordering of a group.

\subsection{Positive Cones and Induced Orderings}

While the most natural method to define a bi-ordering of a group is to specify a total ordering, a bi-ordering is also specified by its set of positive elements.

\begin{definition}
\label{Def:PositiveCone}
    A subset $P\subset G$ is a \emph{(conjugate invariant) positive cone} if it satisfies the following:
    \begin{enumerate}
        \item \label{Itm:Closed}
        $P\cdot P \subset P$
        \item \label{Itm:Partition}
        $G = P \sqcup P^{-1} \sqcup \{1\}$
        \item \label{Itm:Invariant}
        $g^{-1}Pg = P$ for all $g$ in $G$. \qedhere
    \end{enumerate}
\end{definition}

A bi-order $<$ of $G$ corresponds to a unique positive cone by $P_< = \{ g \in G \mid 1 < g\}$.
In the other direction, a bi-order can be obtained from a (conjugate invariant) positive cone $P$ by defining $a <_P b$ if and only if $a^{-1} b \in P$.
We use $(G,<)$, $(G,P)$, or $(G,<,P)$ to mean a group with bi-order $<$ and/or its corresponding positive cone $P$.

\begin{remark}
	The term ``positive cone'' is often used to define the positive elements of a left-ordering of a group.
	In this case, condition (\ref{Itm:Invariant}) is not required.
	The conjugate invariant positive cones of a group are precisely the positive cones associated to left-orders which also happen to be bi-orders.
	Since we only deal with positive cones of bi-orders, all positive cones in this paper are assumed to be conjugate invariant.
\end{remark}

\begin{definition}
	Let $(G,<_G,P_G)$ and $(H,<_H,P_H)$ be bi-ordered groups, and
	let $f\from G \to H$ be a surjective homomorphism.
	The bi-order of $H$ is \emph{induced by} the bi-order of $G$ if either of the following equivalent conditions is true:
	\begin{enumerate}
		\item For all $a,b\in G$, $a<_G b$ implies $f(a)\leq_H f(b)$,
		\item $f(P_G-\ker f)=P_H$. \qedhere
	\end{enumerate}
\end{definition}
\begin{remark}
\label{Rem:ManyToOne}
	Given a surjective homomorphism $f\from G \to H$, each bi-ordering of $G$ can induce either zero or one bi-ordering of $H$.
	However, a given bi-ordering of $H$ may be induced by many distinct bi-orderings of $G$.
\end{remark}

The following lemma constructs one such bi-order of $G$.

\begin{lemma}
	\label{Lem:constructpositivecone}
	Suppose $f\from G \to H$ is a surjective homomorphism of groups.
	Let $P_H$ be a positive cone of $H$ and let $P_K$ be a positive cone of $K = \ker f$ invariant under conjugation by elements of $G$.
	The set
	\[
		P=\{g\in G : f(g)\in P_H \text{ or } g \in P_K\}
	\]
	is a positive cone of $G$ which induces $P_H$.
\end{lemma}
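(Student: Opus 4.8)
The plan is to verify directly that $P$ satisfies the three defining conditions of a (conjugate invariant) positive cone from \refdef{PositiveCone}, and then to check the induced-order condition as a short epilogue. Throughout I would write $K=\ker f$ and record the basic observation that $P_K\subseteq K$: consequently, an element $g\in G$ with $f(g)\neq 1$ lies in $P$ if and only if $f(g)\in P_H$, whereas an element $g\in K$ lies in $P$ if and only if $g\in P_K$. This dichotomy is what makes all the case analyses below manageable.

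For the partition $G=P\sqcup P^{-1}\sqcup\{1\}$, take $g\in G$ and look at $f(g)$. If $f(g)\neq 1$, the partition of $H$ puts exactly one of $f(g),f(g)^{-1}$ into $P_H$, so $g$ lies in exactly one of $P$, $P^{-1}$, and $g\neq 1$; if $f(g)=1$, then $g\in K$ and the partition of $K$ by $P_K$ puts $g$ into exactly one of $P$, $P^{-1}$, $\{1\}$. Disjointness requires a brief argument: supposing $g$ and $g^{-1}$ both lie in $P$, unwind the defining disjunction into its four combinations using $f(g^{-1})=f(g)^{-1}$; each either contradicts the partition of $H$ or that of $K$ (for instance $f(g)\in P_H$ forces $f(g)\neq 1$, hence $g\notin K$, which rules out $g^{-1}\in P_K$). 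Finally $1\notin P$ because $f(1)=1\notin P_H$ and $1\notin P_K$.

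For closure $P\cdot P\subseteq P$, take $a,b\in P$ and split by whether each of $a,b$ lies in $K$. If neither does, $f(a),f(b)\in P_H$, so $f(ab)\in P_H$ by closure of $P_H$. If exactly one lies in $K$, say $a\in K$ (so $a\in P_K$ and $f(a)=1$) while $f(b)\in P_H$, then $f(ab)=f(b)\in P_H$, and the symmetric case is identical. If both lie in $K$, then $a,b\in P_K$ and $ab\in P_K$. In every case $ab\in P$. For conjugation invariance, fix $a\in P$ and $g\in G$: if $f(a)\in P_H$ then $f(g^{-1}ag)=f(g)^{-1}f(a)f(g)\in P_H$ by conjugation invariance of $P_H$; if instead $a\in P_K\subseteq K$ then $g^{-1}ag\in K$ by normality of $K$ and $g^{-1}ag\in P_K$ by the standing hypothesis that $P_K$ is invariant under conjugation by \emph{all} of $G$. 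Thus $g^{-1}Pg\subseteq P$, and the reverse inclusion follows by applying this with $g^{-1}$ in place of $g$. The induced-order condition is then immediate: since $P_K\subseteq K$, we have $P\setminus K=\{g\in G: f(g)\in P_H\}$, whose image under the surjection $f$ is exactly $P_H$.

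None of these steps is genuinely hard; the only place that demands real care is the disjointness half of the partition axiom, where the disjunction defining $P$ has to be taken apart into cases. It is also worth flagging in the write-up exactly where each hypothesis is consumed: surjectivity of $f$ in the induced-order statement, and invariance of $P_K$ under $G$-conjugation (rather than merely $K$-conjugation) in the conjugation-invariance step.
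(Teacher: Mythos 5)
Your proof is correct and follows essentially the same route as the paper's: a direct verification of the three positive-cone axioms from \refdef{PositiveCone} by case analysis, followed by the observation that surjectivity of $f$ gives $f(P\setminus K)=P_H$. The ``dichotomy'' framing ($g\notin K$ lands in $P$ iff $f(g)\in P_H$; $g\in K$ lands in $P$ iff $g\in P_K$) is a nice organizing device but does not change the substance of the argument.
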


\begin{proof}
	By construction, $f(P-\ker f)\subset P_H$,
	and since $f$ is surjective and $1\notin P_H$, we have $P_H\subset f(P-\ker f)$.
	Therefore $f(P-\ker f)= P_H$.
	It remains to show that $P$ is a positive cone. We check the properties in \refdef{PositiveCone} as follows.

	\begin{enumerate}	
	\item Consider $x,y\in P$.
	If $x,y\in P_K$ then $xy\in P_K$ so $xy\in P$.
	If $f(x)\in P_H$ then either $f(xy)=f(x)f(y)$ is a product of elements in $P_H$ or $f(xy)=f(x)$.
	Therefore, $xy\in P$.
	Similarly, if $f(y)\in P_H$ then $xy\in P$.
	This proves \refdef{PositiveCone}\refitm{Closed}.

	\item Since $P_H$ is a positive cone, $f(1)=1\notin P_H$.
	Since $P_K$ is a positive cone, $1\notin P_K$.
	Therefore, $1\notin P$.
	
	Consider $x\in P$.
	We will show that $x^{-1}\notin P$.
	Suppose $f(x)\in P_H$.
	Then, $f(x^{-1})\notin P_H$, and
	since $f(x)\neq 1$, $x$ is not in $\ker f$ so $x^{-1}\notin P_K$.
	Therefore, $x^{-1}\notin P$.
	
	Suppose $x\in P_K$.
	Then $x^{-1}\notin P_K$ since $P_K$ is a positive cone.
	Since $x\in K$ and $P_H$ is a positive cone, $f(x^{-1})=1\notin P_H$.
	Therefore, $x^{-1}\notin P$.

	Since $1\notin P$ and $P\cap P^{-1}=\emptyset$, it follows that $G = P \sqcup P^{-1} \sqcup \{1\}$.
	This proves \refdef{PositiveCone}\refitm{Partition}.
    
    \item  Consider $x\in P$, and
	suppose $g\in G$.
	If $f(x)\in P_H$,
	then $f(g^{-1}xg)=f(g)^{-1}f(x)g(x)\in P_H$ since $P_H$ is a positive cone.
	If $x\in P_K$,
	then $g^{-1}xg\in P_K$ since $P_K$ is invariant under conjugation by elements of $G$.
	Therefore, $g^{-1}xg\in P$ and we have \refdef{PositiveCone}\refitm{Invariant}. \qedhere
	\end{enumerate}
\end{proof}

An application of \reflem{constructpositivecone} is the following result on semidirect products of groups with the integers.

\begin{proposition}
	\label{Prop:BOInvariantOrder}
	Let $G$ be a group and let $\phi\in\aut(G)$.
	A semidirect product $G\rtimes_{\phi}\ZZ$ is bi-orderable if and only if there is a bi-order of $G$ invariant under $\phi$.
\end{proposition}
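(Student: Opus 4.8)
The plan is to prove both directions directly from the definitions, using \reflem{constructpositivecone} for the harder (``if'') direction. Throughout, write elements of $G \rtimes_\phi \ZZ$ as pairs $(g,n)$ with $g \in G$, $n \in \ZZ$, and multiplication $(g,n)(h,m) = (g\,\phi^n(h), n+m)$; the copy of $G$ sits inside as $\{(g,0)\}$ and is normal, with quotient $\ZZ$.

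For the ``only if'' direction, suppose $G \rtimes_\phi \ZZ$ carries a bi-order $<$ with positive cone $\calP$. Let $P_G = \calP \cap G$; this is the positive cone of the induced bi-order on the subgroup $G$ (conditions \refitm{Closed}, \refitm{Partition}, \refitm{Invariant} of \refdef{PositiveCone} are inherited by intersection with a subgroup). It remains to check $\phi(P_G) = P_G$. But $\phi$ is realized by conjugation in $G \rtimes_\phi \ZZ$: writing $t = (1,1)$, one has $t(g,0)t^{-1} = (\phi(g),0)$. Since $\calP$ is conjugation-invariant, $t P_G t^{-1} = P_G$, which is exactly $\phi(P_G) = P_G$. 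Hence the induced bi-order on $G$ is $\phi$-invariant.

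For the ``if'' direction, suppose $P_G$ is a positive cone of a $\phi$-invariant bi-order on $G$. I want to apply \reflem{constructpositivecone} with the surjection $f \from G \rtimes_\phi \ZZ \to \ZZ$ (projection onto the second factor), whose kernel is $G$. I take $P_H$ to be the standard positive cone $\{n : n > 0\}$ of $\ZZ$, and I take $P_K = P_G$. The hypothesis of the lemma requires $P_G$ to be invariant under conjugation by \emph{all} of $G \rtimes_\phi \ZZ$, not just by $G$: conjugation by $(h,0)$ preserves $P_G$ because $P_G$ is a positive cone of $G$ (condition \refitm{Invariant}), and conjugation by $t = (1,1)$ acts on $G$ as $\phi$, which preserves $P_G$ by hypothesis; since these generate $G \rtimes_\phi \ZZ$, every conjugate of $P_G$ by an element of the big group equals $P_G$. \reflem{constructpositivecone} then produces a positive cone
\[
	\calP = \{(g,n) : n > 0 \text{ or } (n = 0 \text{ and } g \in P_G)\}
\]
on $G \rtimes_\phi \ZZ$, so the group is bi-orderable.

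Neither direction presents a genuine obstacle; the one point that needs care is the bookkeeping that conjugation by the generator $t$ of the $\ZZ$ factor induces exactly the automorphism $\phi$ on the normal subgroup $G$, so that ``conjugation-invariant'' for the positive cone of $G \rtimes_\phi \ZZ$ translates precisely into ``$\phi$-invariant'' for the positive cone of $G$. Once that identification is made explicit, the ``only if'' direction is immediate and the ``if'' direction is a direct invocation of \reflem{constructpositivecone}.
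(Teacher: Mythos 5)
Your proof is correct and takes essentially the same approach as the paper's: restrict the positive cone to $G$ and note that conjugation by $t=(1,1)$ realizes $\phi$ for the ``only if'' direction, and invoke \reflem{constructpositivecone} with the projection to $\ZZ$ for the ``if'' direction. The only cosmetic difference is that the paper verifies conjugation-invariance of $P_G$ inside $G\rtimes_\phi\ZZ$ by computing $(g,k)(x,0)(g,k)^{-1}=(g\phi^k(x)g^{-1},0)$ directly, whereas you argue via generators; both work equally well.
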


\begin{proof}
	First suppose that $G\rtimes_{\phi}\ZZ$ has a bi-order $<$.
	The bi-order $<$ restricts to a bi-order $<_G$ on $G$.
	Since the action of $\phi$ is conjugation in $G\rtimes_{\phi}\ZZ$ it preserves $<_G$.
	
	In the other direction, suppose that there is a bi-order of $G$ with positive cone $P_K$ invariant under $\phi$.
	By an abuse of notation, we also denote the corresponding subset of $G\rtimes_{\phi}\ZZ$ as $P_K$. 
%
	We apply \reflem{constructpositivecone} with $G\rtimes_{\phi}\ZZ$ playing the role of $G$, the integers $\ZZ$ playing the role of $H$, and the projection map $f\from G\rtimes_{\phi}\ZZ\to\ZZ$ being the surjective homomorphism. For the positive cone $P_H$ on $\ZZ$ we take the set of positive integers.
	We check that $P_K$ is invariant under conjugation by elements of $G\rtimes_{\phi}\ZZ$ as follows.
	Consider an element $(x,0) \in P_K$ and an arbitrary element $(g,k)\in G\rtimes_{\phi}\ZZ$.
	We calculate
	\[
		(g,k)\cdot (x,0)\cdot(g,k)^{-1}=(g\phi^k(x)g^{-1},0)
	\]
	Since $P_K$ is invariant under $\phi$ and conjugation in $G$, we have that $(g\phi^k(x)g^{-1}, 0)\in P_K$.
	Thus $P_K$ is invariant under conjugation by elements of $G\rtimes_{\phi}\ZZ$.
	Applying \reflem{constructpositivecone}, we get that 
	\[
		P=\{(g,n)\in G\rtimes_{\phi}\ZZ \mid 0<n\text{ or } (n = 0 \text{ and } g\in P_K)\}  
	\]
	is a positive cone of $G\rtimes_{\phi}\ZZ$.
	Therefore, $G\rtimes_{\phi}\ZZ$ is bi-orderable.
\end{proof}

%
%

\subsection{Convex Subgroups}

Given a surjective homomorphism of bi-orderable groups $f\from G \to H$, we now consider when a bi-ordering of $G$ induces a bi-ordering of $H$.
The answer is related to the convex subgroups of $G$;
see \refdef{Convex}.
We first restate the definition of convexity in terms of positive cones.

\begin{proposition}
\label{Prop:CosetOrder}
    Let $(G,<,P)$ be a bi-ordered group.
    A subgroup $C\subset G$ is convex with respect to $<$ if and only if for each $g \in G$, either $gC\subset P$, $gC\subset P^{-1}$, or $gC=C$.
    The analogous statement for right cosets also holds.
\end{proposition}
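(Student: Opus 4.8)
The plan is to prove the equivalence for left cosets directly from the positive-cone axioms, translating the order relations by left- and right-invariance, and then obtain the right-coset version by the mirror argument. The statement is really just a restatement of the definition of convexity (\refdef{Convex}) in terms of the cosets and the partition $G = P \sqcup P^{-1} \sqcup \{1\}$.

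\emph{($\Rightarrow$)} Assume $C$ is convex and fix $g \in G$. If $g \in C$ then $gC = C$, so suppose $g \notin C$; then $1 \notin gC$. The claim is that $gC$ cannot contain elements of both $P$ and $P^{-1}$. Indeed, if $gc_1 \in P$ and $gc_2 \in P^{-1}$ for some $c_1, c_2 \in C$, then $gc_1 > 1 > gc_2$; right-multiplying by $c_1^{-1}$ and by $c_2^{-1}$ and using right-invariance gives $g > c_1^{-1}$ and $g < c_2^{-1}$ with $c_1^{-1}, c_2^{-1} \in C$, so convexity forces $g \in C$, a contradiction. Since $G = P \sqcup P^{-1} \sqcup \{1\}$ and $1 \notin gC$, this leaves $gC \subset P$ or $gC \subset P^{-1}$.

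\emph{($\Leftarrow$)} Assume every left coset is contained in $P$, contained in $P^{-1}$, or equal to $C$, and suppose $a < g < b$ with $a, b \in C$. If $g \notin C$ then $g^{-1} \notin C$, so $g^{-1}C \neq C$ and hence $g^{-1}C \subset P$ or $g^{-1}C \subset P^{-1}$. But left-multiplying $a < g$ and $g < b$ by $g^{-1}$ and using left-invariance gives $g^{-1}a < 1 < g^{-1}b$, so the single coset $g^{-1}C$ contains $g^{-1}a \in P^{-1}$ and $g^{-1}b \in P$, which is impossible. Hence $g \in C$, so $C$ is convex.

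For right cosets the roles of the two invariances simply swap: in the forward direction, left-multiply the relations $c_1 g > 1$ and $c_2 g < 1$ by $c_1^{-1}$ and $c_2^{-1}$ to sandwich $g$ between $c_1^{-1}$ and $c_2^{-1}$; in the converse, right-multiply $a < g < b$ by $g^{-1}$ to place $ag^{-1}$ and $bg^{-1}$ in the common coset $Cg^{-1}$. I do not anticipate a genuine obstacle here; the only point requiring care is the bookkeeping — using right-invariance when controlling left cosets and left-invariance when controlling right cosets — so that the translated elements actually land in the coset one wants to pin down. Nothing beyond the positive-cone axioms and the partition $G = P \sqcup P^{-1} \sqcup \{1\}$ is used.
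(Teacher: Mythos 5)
Your proof is correct and follows essentially the same approach as the paper: in the forward direction you sandwich $g$ (or a translate of it) between two elements of $C$ and invoke convexity, and in the converse you translate the chain $a<g<b$ into a single coset to force a contradiction with the trichotomy $G = P \sqcup P^{-1} \sqcup \{1\}$. The only differences are cosmetic: in the converse you work with $g^{-1}C$ via left-invariance, while the paper works with $gC$ via right-invariance; and in the forward direction you take two arbitrary coset elements $gc_1, gc_2$ where the paper normalizes so that the representative $g$ itself is the positive one.
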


\begin{proof}
    Suppose that $C$ is convex with respect to $<$.
    Let $gC$ be a left coset of $C$ such that $gC\neq C$.
    Thus, $g\notin C$.
    Let $h = gc$ where $c\in C$.
    For a contradiction, assume that $g\in P$ and $h\in P^{-1}$.
    Therefore $h=gc<1<g$.
    Multiplying by $g^{-1}$ on the left we get $c<g^{-1}<1$.
    Since $C$ is convex, this implies that $g\in C$ and we have reached a contradiction.
    Therefore, either $gC\subset P$ or $gC\subset P^{-1}$.
    
    Now suppose that for each $g \in G$, either $gC\subset P$, $gC\subset P^{-1}$, or $gC=C$.
    Let $a$ and $b$ be in $C$, and
    let $a<g<b$ for some $g\in G$.
    These inequalities imply that $1<ga^{-1}$ and $gb^{-1}<1$.
    Thus, $gC$ is not a subset of $P$ or $P^{-1}$.
    Therefore we have that $gC=C$ and so $g\in C$.
    
    The proof for right cosets is similar.
\end{proof}


As a consequence of \refprop{CosetOrder}, we have the following.
\begin{corollary}
\label{Cor:ConvexQuotientBiorder}
Let $(G,<,P)$ be a bi-ordered group.
If $C$ is a normal convex subgroup of $G$ then there is a bi-ordering on $G/C$ whose positive cone is the set of cosets $gC$ such that $gC \subset P$.
This bi-ordering of $G/C$ is the one induced by $G$ using the quotient map. \qed
\end{corollary}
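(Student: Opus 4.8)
The plan is to verify directly that the set $\bar{P} := \{gC \in G/C : gC \subset P\}$ satisfies the three conditions of \refdef{PositiveCone} for the quotient group $G/C$, and then to identify the bi-order it determines as the one induced by the quotient map $q \from G \to G/C$. The engine of the argument is \refprop{CosetOrder}: since $C$ is convex, every left coset of $C$ is contained in $P$, is contained in $P^{-1}$, or equals $C$.

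First I would record that $\bar{P}^{-1} = \{hC : hC \subset P^{-1}\}$. Indeed, if $gC \subset P$ then for each $c \in C$ we have $(gc)^{-1} = c^{-1}g^{-1} \in P^{-1}$, and $c^{-1}g^{-1} \in Cg^{-1} = g^{-1}C$ by normality of $C$, so $g^{-1}C \subset P^{-1}$; the reverse inclusion is symmetric. Together with the trichotomy above, the fact that cosets are nonempty, and the partition $G = P \sqcup P^{-1} \sqcup \{1\}$, this yields $G/C = \bar{P} \sqcup \bar{P}^{-1} \sqcup \{C\}$, which is \refdef{PositiveCone}\refitm{Partition}. For \refitm{Closed}, suppose $gC, hC \subset P$; then $g = g \cdot 1 \in P$ and $hc \in hC \subset P$ for every $c \in C$, so $ghc = g \cdot (hc) \in P \cdot P \subset P$, whence $(gC)(hC) = ghC \subset P$. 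For \refitm{Invariant}, suppose $xC \subset P$ and $g \in G$; writing $g^{-1}xgc = g^{-1}\bigl(x\,(gcg^{-1})\bigr)g$ and using $gcg^{-1} \in C$ (normality) gives $x\,(gcg^{-1}) \in xC \subset P$, and conjugation invariance of $P$ in $G$ then gives $g^{-1}xgc \in P$ for all $c \in C$, so $(gC)^{-1}(xC)(gC) = g^{-1}xgC \subset P$. This shows $\bar{P}$ is a positive cone, hence defines a bi-order on $G/C$.

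To finish, I would check that this bi-order is the one induced by $q$, i.e.\ that $q(P \setminus C) = \bar{P}$ (noting $\ker q = C$). If $gC \subset P$ then $g = g \cdot 1 \in P$, and $gC \neq C$ because $1 \in C$ but $1 \notin P$, so $g \notin C$ and $gC \in q(P \setminus C)$. Conversely, if $g \in P$ and $g \notin C$, then $gC \neq C$, so by \refprop{CosetOrder} either $gC \subset P$ or $gC \subset P^{-1}$; the latter would put $g$ in $gC \cap P \subset P^{-1} \cap P = \emptyset$, so $gC \subset P$ and $gC \in \bar{P}$. Hence $q(P \setminus C) = \bar{P}$, and the bi-order with positive cone $\bar{P}$ is induced by $q$.

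I do not expect a genuine obstacle: the corollary is bookkeeping on top of \refprop{CosetOrder}. The only point requiring care is the role of normality of $C$ — it is what makes $G/C$ a group and what licenses the substitutions $c^{-1}g^{-1} \in g^{-1}C$ and $gcg^{-1} \in C$ in the inverse and conjugation computations; without it $\bar{P}$ need not even be closed under multiplication. I would make sure each coset computation flags exactly where normality is invoked.
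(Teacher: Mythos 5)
Your proof is correct, and it is exactly the direct verification the paper intends: the paper states this as an immediate consequence of \refprop{CosetOrder} with no written proof, and your argument supplies the expected bookkeeping (using the coset trichotomy for the partition axiom, normality of $C$ for closure under multiplication and inversion in $G/C$, and conjugation-invariance of $P$ for the third axiom) together with the identification $q(P\setminus C)=\bar P$.
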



%
%

We now state two useful results about convex subgroups.
We refer the reader to a paper by Conrad~\cite{Con59} for proofs of these facts in the context of right-orders.
However, the arguments work equally well for bi-orders.

\begin{proposition}[Conrad~\cite{Con59}, Section~3.3]
	\label{Prop:NestedConvex}
	Let $G$ be a bi-ordered group the set of all convex subgroups of $G$ is totally ordered by
inclusion. \qed
\end{proposition}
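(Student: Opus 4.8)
The plan is to argue by contradiction, exploiting that inclusion is already a partial order on the set of convex subgroups, so the content of the statement is exactly that any two convex subgroups are comparable. Given convex subgroups $C$ and $D$ that are incomparable, I would manufacture an element forced to lie in one of them yet chosen to lie outside it.

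First I would pick witnesses to incomparability: an element $c \in C \setminus D$ and an element $d \in D \setminus C$. Since $1$ lies in every subgroup we have $c \ne 1$ and $d \ne 1$; moreover $c^{-1} \notin D$ and $d^{-1} \notin C$ because $C$ and $D$ are subgroups, so after replacing $c$ by $c^{-1}$ and $d$ by $d^{-1}$ if necessary I may assume $1 < c$ and $1 < d$, using that $<$ is a total order.

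Next I would compare $c$ and $d$ directly. They are distinct, since $c \notin D$ while $d \in D$. If $c < d$, then $1 < c < d$ with $1, d \in D$, so convexity of $D$ (in the sense of \refdef{Convex}) forces $c \in D$, contradicting $c \notin D$. If instead $d < c$, then $1 < d < c$ with $1, c \in C$, so convexity of $C$ forces $d \in C$, contradicting $d \notin C$. Either way we reach a contradiction, so one of $C \subseteq D$ or $D \subseteq C$ holds, which is what is needed.

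There is essentially no obstacle here; the only point requiring a moment's care is the reduction to positive witnesses, which uses only that convex subgroups are closed under inversion and that $<$ is a total order. In particular conjugation invariance of the order plays no role, which is why the result carries over verbatim from Conrad's treatment of right-orders.
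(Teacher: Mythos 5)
Your proof is correct, and it is the standard elementary argument: pass to positive witnesses $c \in C \setminus D$ and $d \in D \setminus C$ using closure under inverses and totality of $<$, then apply the definition of convexity to whichever of $1 < c < d$ or $1 < d < c$ holds to derive a contradiction. The paper does not supply a proof of this proposition (it is cited to Conrad with the \qed placed at the statement), so there is no internal argument to compare against; your remark that only totality of the order and the subgroup property are used, with conjugation invariance playing no role, is also accurate and explains why the result transfers verbatim from the one-sided setting.
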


\begin{remark}
    Recall that a convex subgroup $C$ of a bi-ordered group $(G,<)$ is \emph{maximal} if $C$ is proper and the only convex subgroups containing $C$ are $G$ and $C$.
	By Proposition \ref{Prop:NestedConvex}, maximal convex subgroups are unique when they exist.
\end{remark}

\begin{proposition}[Conrad~\cite{Con59}, Section~3.6]
\label{Prop:ConvexInduced}
	Let $(G,<)$ be a bi-ordered group.
	Suppose that $f\from G \to H$ is a surjective homomorphism of groups.
	If the bi-ordering $<$ induces a bi-ordering on $H$ then $\ker f$ is convex with respect to $<$.
	In particular, $H$ is bi-orderable if and only if $\ker f$ is convex with respect to some bi-ordering of $G$. \qed
\end{proposition}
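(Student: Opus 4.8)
The plan is to deduce convexity of $K \defeq \ker f$ straight from the defining property of an induced bi-order, and then to obtain the ``in particular'' clause by feeding this into \refcor{ConvexQuotientBiorder}, using that $K$ is automatically normal as a kernel.

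\emph{Step 1: $K$ is convex for $<$.} Suppose $<$ induces a bi-ordering $<_H$ on $H$. Let $a,b \in K$ and $g \in G$ with $a < g < b$; I want $g \in K$. Applying $f$ and invoking condition~(1) of the definition of ``induced by'' — in its non-strict form $x <_G y \Rightarrow f(x) \leq_H f(y)$, which is all one can assert since $f$ is typically not injective — gives $f(a) \leq_H f(g) \leq_H f(b)$. But $a,b \in K$, so $f(a) = f(b) = 1$, hence $1 \leq_H f(g) \leq_H 1$, and therefore $f(g) = 1$, i.e. $g \in K$. That is the whole proof of Step~1; the only point to watch is that one must use the non-strict inequality, since the entire coset $K$ is flattened onto $1$.

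For readers who prefer the coset formulation of \refprop{CosetOrder}, the same fact reads as follows. If $g \in K$ then $gK = K$. If $g \notin K$ then $f(g) \neq 1$, so $f(g)$ lies in $P_H$ or in $P_H^{-1}$; say $f(g) \in P_H$. Given $c \in K$, note $gc \notin K$, so $gc \in P$ or $gc \in P^{-1}$. If $gc \in P^{-1}$ then $(gc)^{-1} \in P \setminus K$, and condition~(2) of ``induced'' (namely $f(P\setminus K) = P_H$) yields $f(g)^{-1} = f((gc)^{-1}) \in P_H$, contradicting $f(g)\in P_H$. So $gK \subseteq P$, and symmetrically $f(g) \in P_H^{-1}$ forces $gK \subseteq P^{-1}$. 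By \refprop{CosetOrder}, $K$ is convex.

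\emph{Step 2: the equivalence.} Because $K$ is a kernel it is normal, so when $K$ is convex with respect to a bi-ordering of $G$, \refcor{ConvexQuotientBiorder} produces the induced bi-ordering on $G/K \cong H$, showing $H$ bi-orderable; conversely, a bi-ordering of $G$ inducing one on $H$ has $K$ convex by Step~1. I do not expect a substantive obstacle anywhere: the entire content is the observation that $f$ collapses the order-interval between two elements of $K$ to the single point $1 \in H$. The only places needing care are the strict-versus-non-strict bookkeeping in Step~1 and, in the positive-cone version, remembering to use \emph{both} clauses of the definition of an induced bi-order.
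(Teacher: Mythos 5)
The paper does not give its own proof of this proposition; it is stated as a cited result of Conrad, so there is nothing in-text to compare against. Your Step 1 is correct: the direct argument (push the strict chain $a < g < b$ through $f$, collapsing to non-strict inequalities, to obtain $1 \leq_H f(g) \leq_H 1$, hence $f(g)=1$) and the coset/positive-cone reformulation via \refprop{CosetOrder} are both sound, and you were right to flag the strict-versus-non-strict bookkeeping as the one point requiring care.

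However, Step 2 has a gap in the forward implication of the ``in particular'' clause. You need: if $H$ is bi-orderable, then $\ker f$ is convex with respect to \emph{some} bi-ordering of $G$. What you wrote --- ``conversely, a bi-ordering of $G$ inducing one on $H$ has $K$ convex by Step 1'' --- is merely a restatement of Step 1; it is not the converse of your $\Leftarrow$ direction, and it silently presupposes that a bi-ordering of $G$ inducing one on $H$ exists. The ambient bi-ordering $<$ on $G$ need not make $K$ convex: take $G = \ZZ^2$ with an irrational-slope bi-order as in \refexa{OrderZxZ}\refitm{IrrationalSlope}; no proper nontrivial subgroup is convex, so a $\ZZ$ summand $K$ is not convex, even though the quotient $H\cong\ZZ$ is bi-orderable. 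To produce a bi-ordering of $G$ for which $K$ is convex you must construct one, and \reflem{constructpositivecone} is exactly the tool. Take $P_H$ any positive cone on $H$, and take $P_K$ to be the restriction of $<$ to $K$ --- this is a positive cone on $K$, and it is invariant under conjugation by all of $G$ precisely because $<$ is a bi-order on $G$. \reflem{constructpositivecone} then yields a positive cone $P$ on $G$ inducing $P_H$, and your Step 1 applied to $P$ shows $K$ is convex with respect to $P$, which completes the biconditional.
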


\begin{proposition}[Clay and Rolfsen~\cite{ClaRol12}, Lemma 2.4]
	\label{Prop:FiniteGeneratorMaxConvex}
	If $(G, <)$ is a finitely-generated non-trivial bi-ordered group, then there exists a unique maximal convex subgroup $C$ of $G$ satisfying $C\neq G$.
	Moreover, $C$ is normal in $G$ and $G/C$ is abelian.
	If an automorphism $\phi\from G\to G$ preserves $<$ then $\phi(C)=C$. \qed
\end{proposition}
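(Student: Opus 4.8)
The plan is to realise $C$ as the union of all proper convex subgroups of $(G,<)$ and then read off each claimed property. By \refprop{NestedConvex} the collection $\mathcal{C}$ of convex subgroups of $G$ is a chain under inclusion, and it contains the proper subgroup $\{1\}$ since $G$ is nontrivial. Set $C := \bigcup\{\, C' \in \mathcal{C} : C' \neq G \,\}$. First I would record two routine facts: the union of a chain of subgroups is a subgroup, and the union of a chain of convex subgroups is convex. For the latter, if $a<g<b$ with $a,b\in C$, then by totality of the chain a single proper convex subgroup $C'$ contains both $a$ and $b$, so $g\in C'\subseteq C$. Hence $C$ is a convex subgroup of $G$.

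The one place the hypothesis is used --- and the heart of the argument --- is showing $C\neq G$. Suppose it were, and let $g_1,\dots,g_n$ generate $G$. Each $g_i$ lies in some proper convex subgroup $C_i$, and since $C_1,\dots,C_n$ are totally ordered by inclusion one of them, say $C_\ast$, contains all the others, hence all the generators, forcing $C_\ast=G$ and contradicting properness. Therefore $C\in\mathcal{C}$ and is the greatest proper convex subgroup, i.e.\ the maximal proper convex subgroup; uniqueness is immediate from \refprop{NestedConvex}, since of two maximal proper convex subgroups one must contain the other.

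Next I would treat normality and $\phi$-invariance together, via the observation that an order-preserving group automorphism of $G$ carries convex subgroups to convex subgroups (apply the automorphism and its inverse to the defining inequalities). This applies to every inner automorphism $x\mapsto gxg^{-1}$: conjugation is order-preserving on a bi-ordered group because $1<x$ forces $1<gxg^{-1}$ by conjugation-invariance of the positive cone (\refdef{PositiveCone}\refitm{Invariant}). So for $g\in G$ the subgroup $gCg^{-1}$ is convex and proper, whence $gCg^{-1}\subseteq C$ by maximality; replacing $g$ by $g^{-1}$ gives the reverse inclusion, so $gCg^{-1}=C$ and $C$ is normal. The identical argument with an order-preserving automorphism $\phi$ in place of conjugation yields $\phi(C)=C$.

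Finally, $G/C$ is abelian. Since $C$ is normal and convex, \refcor{ConvexQuotientBiorder} equips $G/C$ with the bi-order induced by the quotient map $q$; pulling back a convex subgroup of $G/C$ along $q$ gives a convex subgroup of $G$ containing $C$, so by maximality of $C$ the group $G/C$ has no proper nontrivial convex subgroup and is therefore Archimedean. By a theorem of H\"older \cite{Holder1901}, an Archimedean bi-ordered group is order-isomorphic to a subgroup of $(\RR,+)$, hence abelian. The only genuinely non-elementary input is this last appeal to H\"older's theorem; everything else is a short manipulation, and the finite generation hypothesis enters solely --- but essentially --- at the properness step.
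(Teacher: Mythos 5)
Your proof is correct. The paper does not prove this result itself---it is stated with a terminal \qed and attributed to Clay and Rolfsen \cite{ClaRol12}, Lemma~2.4---so there is no in-paper argument to compare against. Your approach (take $C$ to be the union of all proper convex subgroups, use \refprop{NestedConvex} to see the union is a convex subgroup, use finite generation to see it is proper, derive normality and $\phi$-invariance from the fact that order-preserving automorphisms permute convex subgroups, and appeal to H\"older via \refcor{ConvexQuotientBiorder} for abelianness of $G/C$) is the standard one and matches the cited source in substance.
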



Given a bi-ordered group $(G,<)$, knowing information about the set of subgroups convex with respect to $<$ can provide important information about the bi-ordering $<$.
The following theorem is a classical result of H\"older.
(See Clay and Rolfsen~\cite[Theorem 2.6]{ClayRolf16} for a more recent exposition of this result.)

\begin{theorem}[H\"older~\cite{Holder1901}]
\label{Thm:Holder}
Suppose that $(G,<)$ is a bi-ordered group with no proper non-trivial convex subgroups.
Then there is an injective homomorphism $f\from G\to \RR$ such that the bi-order of the image of $f$ induced by $<$ is the same as the one induced by the usual ordering of $\RR$.
\end{theorem}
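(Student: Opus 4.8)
The plan is to reduce this to the classical theorem of H\"older on \emph{Archimedean} ordered groups; the hypothesis on convex subgroups is used exactly once, to establish Archimedeanness. Recall that $(G,<)$ is Archimedean if for all $g,h\in G$ with $g>1$ there is an integer $n$ with $g^{n}>h$.

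\textbf{Step 1: $G$ is Archimedean.} We may assume $G\neq\{1\}$, as otherwise there is nothing to prove. Suppose $G$ is not Archimedean, so there are $a,b\in G$ with $a>1$, $b>1$, and $a^{n}<b$ for every $n\geq 1$. I would then form
\[
	C=\{\, g\in G : a^{-n}<g<a^{n}\text{ for some }n\geq 1\,\}.
\]
This set is a subgroup of $G$: it contains $1$; it is closed under inversion because inverting reverses inequalities; and it is closed under multiplication because, using two-sided invariance, $g<a^{n}$ and $h<a^{m}$ give $gh<a^{n}h<a^{n+m}$, and likewise $a^{-(n+m)}<gh$. It is convex directly from its definition (take the larger of the two exponents), it is nontrivial since $a\in C$, and it is proper since $b\notin C$. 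This contradicts the hypothesis, so $G$ is Archimedean.

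\textbf{Step 2: the H\"older embedding.} Fix $a\in G$ with $a>1$. By the Archimedean property, for each $g\in G$ and each $n\geq 1$ there is a unique integer $\lambda(g,n)$ with $a^{\lambda(g,n)}\leq g^{n}<a^{\lambda(g,n)+1}$, and I would set $f(g)=\lim_{n\to\infty}\lambda(g,n)/n$. The points to verify are: \emph{(a)} the limit exists --- raising the defining inequalities to the $m$th power (legitimate since $x\leq y$ implies $x^{m}\leq y^{m}$, again by two-sided invariance) yields $m\,\lambda(g,n)\leq\lambda(g,nm)\leq m\,\lambda(g,n)+m-1$, from which $\bigl(\lambda(g,n)/n\bigr)_{n}$ is Cauchy; \emph{(b)} $f$ is monotone, since $g\leq h$ gives $g^{n}\leq h^{n}$ and hence $\lambda(g,n)\leq\lambda(h,n)$; \emph{(c)} $G$ is abelian; \emph{(d)} granting \emph{(c)}, $f$ is a homomorphism, because $(gh)^{n}=g^{n}h^{n}$ together with $\lambda(g,n)+\lambda(h,n)\leq\lambda(gh,n)\leq\lambda(g,n)+\lambda(h,n)+1$ forces $f(gh)=f(g)+f(h)$ in the limit; \emph{(e)} $f$ is injective, since if $g>1$ and $m$ is chosen with $g^{m}>a$ then $\lambda(g,nm)\geq n$ for all $n$, so $f(g)\geq 1/m>0$. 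Together these say $f$ is a monotone injective homomorphism, hence strictly order-preserving, which identifies $(G,<)$ with an ordered subgroup of $(\RR,<)$; the last clause of the theorem --- that the order induced on $f(G)$ by $<$ coincides with the one inherited from $\RR$ --- is just a restatement of this.

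The main obstacle is Step 2\emph{(c)}: an Archimedean bi-ordered group is abelian. This is the substantive content of H\"older's theorem --- one shows that a nontrivial commutator, together with $a$, would produce a configuration incompatible with the Archimedean property --- and it is carried out in the sources cited in the statement (Conrad~\cite{Con59}, Clay--Rolfsen~\cite{ClayRolf16}). Everything else is routine, the one recurring subtlety being that each inequality must be propagated using both the left and the right invariance of $<$.
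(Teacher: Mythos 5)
The paper does not prove this theorem; it is stated as a classical result with a pointer to H\"older's original paper and to Clay--Rolfsen~\cite[Theorem~2.6]{ClayRolf16}. So there is no in-paper proof to compare against, and your sketch must be judged on its own terms.

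Your outline is the standard proof and it is correct in structure. Step~1 --- that the absence of proper non-trivial convex subgroups forces the Archimedean condition --- is the right reduction and is carried out cleanly: $C$ is the convex subgroup generated by $a$, it is closed under products and inverses by exactly the manipulations you give, it is convex (take the larger exponent), it contains $a$, and it omits $b$; assuming $b>1$ is not a real restriction since $a^{1}\leq b$ already forces $b\geq a>1$. Step~2 is H\"older's classical embedding; the displayed inequalities are the right ones, and the Cauchy estimate you want, $\lvert\lambda(g,n)/n-\lambda(g,n')/n'\rvert\leq 1/n+1/n'$, follows by comparing each term to $\lambda(g,nn')/(nn')$ as you indicate. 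Existence of $\lambda(g,n)$ for $g^{n}\leq 1$ does use the Archimedean property applied to $g^{-n}$, which you have.

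The one place your write-up is not self-contained is Step~2(c), that an Archimedean bi-ordered group is abelian, and you flag this honestly. You cannot run 2(d) as written without it, and it is not a triviality --- it is where the substance of H\"older's theorem lives. Deferring it to the cited sources is a defensible stopping point for a theorem the paper itself does not prove, but as written your argument is an outline of a known proof rather than a complete one. If you want it closed, Clay--Rolfsen's Theorem~2.6 is the reference to follow; the key is a careful estimate showing a nontrivial commutator of positive elements would have to be ``infinitesimal'' relative to $a$, contradicting the Archimedean property.
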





\subsection{Constructing bi-orders}

In this section we describe a general procedure for building bi-orders.
Let $G$ be a group, and consider a surjective homomorphism $f\from G\to H$.
Let $C$ be the kernel of $f$.
We can define a positive cone on $G$ by specifying a positive cone of $H$ and a positive cone on $C$ invariant under conjugation by elements of $G$ as in \reflem{constructpositivecone}.
By \refprop{ConvexInduced}, the subgroup $C$ will be convex with respect to this bi-order.

Abelian groups are a convenient choice for $H$ since bi-orderability is well understood for these groups.
In particular, Levi proves the following.
\begin{theorem}[Levi~\cite{Levi42}, Section~3]
	\label{Thm:Levi}
	An abelian group is bi-orderable if and only if it is torsion-free.
	Thus any free abelian group is bi-orderable. \qed
\end{theorem}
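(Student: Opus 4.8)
The plan is to prove the two implications separately; only the ``if'' direction is substantial, and it is the only one that uses commutativity.

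\emph{Bi-orderable implies torsion-free.} Suppose $(G,<)$ is bi-ordered; left-invariance alone suffices here. Let $g\neq 1$, and, replacing $g$ by $g^{-1}$ if necessary, assume $1<g$. Left-multiplying repeatedly by $g$ gives $g^{n}<g^{n+1}$ for all $n\geq 0$, so by transitivity $g^{n}\neq 1$ for every $n\geq 1$; hence $g$ has infinite order and $G$ is torsion-free.

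\emph{Torsion-free abelian implies bi-orderable.} Let $G$ be torsion-free abelian, and form the $\QQ$-vector space $V=G\otimes_{\ZZ}\QQ$ with the natural homomorphism $\iota\from G\to V$, $g\mapsto g\otimes 1$. The kernel of $\iota$ is the torsion subgroup of $G$, so $\iota$ is injective; since the preimage of a positive cone under an injective homomorphism is again a positive cone, it suffices to bi-order $V$ and pull the ordering back along $\iota$. Using the axiom of choice, pick a $\QQ$-basis $\{e_i\}_{i\in I}$ of $V$ and fix any total order on $I$. Every $v\in V$ has finite support, so for $v\neq 0$ there is a least index $i(v)\in I$ with nonzero coordinate $\lambda_{i(v)}(v)\in\QQ$. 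Put
\[
	P=\{\, v\in V \mid v\neq 0 \ \text{and}\ \lambda_{i(v)}(v)>0 \,\}.
\]
The equality $V=P\sqcup(-P)\sqcup\{0\}$ and conjugation invariance are automatic because $V$ is abelian, and $P+P\subseteq P$ follows by comparing the leading indices $i(v),i(w)$ of elements $v,w\in P$ — in the case $i(v)=i(w)$ one uses that a sum of two positive rationals is positive. Thus $P$ is a positive cone, bi-ordering $V$ and hence $G$. In particular every free abelian group, being torsion-free, is bi-orderable.

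\emph{An alternative, and the main obstacle.} One can avoid tensor products: order the partial positive cones of $G$ — submonoids $Q$ with $Q\cap(-Q)=\{0\}$ — by inclusion, take a maximal one $Q$ by Zorn's lemma (unions of chains of such are again partial positive cones), and check it is total. Indeed, if $g\notin Q\cup(-Q)$, then at least one of the submonoids $Q+\NN g$ and $Q+\NN(-g)$ still meets its own negative only in $0$: failure of both would produce positive integers $k,\ell$ with $kg\in -Q$ and $\ell g\in Q$, whence $k\ell g\in Q\cap(-Q)=\{0\}$, forcing $g=0$ by torsion-freeness — a contradiction. In either approach the single genuine computation is the closure axiom $P+P\subseteq P$ (equivalently, the enlargement step in the Zorn argument), and the single non-constructive ingredient is the axiom of choice; these are the only obstacles.
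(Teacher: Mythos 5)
Your proof is correct; note, though, that the paper does not prove this statement at all---it is quoted as Levi's theorem with a terminal \verb|\qed| and used as a black box, so there is no ``paper's approach'' to compare against. Both of your arguments for the hard direction are valid. The tensor-product route ($G\hookrightarrow G\otimes_\ZZ\QQ$, pick a $\QQ$-basis, order leading coefficients lexicographically) is clean and mirrors the paper's own \refexa{InfiniteLex}, which performs exactly this lexicographic construction on a free abelian group; its only mild burden is verifying that the kernel of $g\mapsto g\otimes 1$ is the torsion subgroup. The Zorn's-lemma route on partial positive cones is closer in spirit to the classical Levi/Lorenzen argument, is more self-contained (no tensor products, no vector-space bases), and your enlargement step---deriving $kg\in -Q$, $\ell g\in Q$, hence $k\ell g\in Q\cap(-Q)=\{0\}$ and so $g=0$ by torsion-freeness---is exactly where commutativity and torsion-freeness both bite. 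One small stylistic point: in the tensor argument you take the \emph{least} index in the support; the paper's \refexa{InfiniteLex} takes the \emph{greatest}. Either convention works since supports are finite, but it is worth flagging that these give different (though equally valid) orders.
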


\begin{example}
\label{Exa:OrderZxZ}
	Suppose that $A \cong \ZZ^2$.
	\begin{enumerate}
	\item
	One way to define a bi-ordering of $A$ is to compare elements one factor at a time.
	In other words, an element $(m,n)\in A$ is positive when $m>0$ or when $m=0$ and $n>0$.
	The set $\{0\}\times\ZZ$ is convex with respect to this bi-ordering.
	This type of bi-ordering is referred to as a \emph{lexicographical bi-ordering}.
	
	\item
	\label{Itm:IrrationalSlope}
	Another option is to
	let $v \in \RR \oplus \RR$ be any vector with irrational slope.
	Then (with the dot product defined as usual) the set $P = \{ u \in A \mid u \cdot v > 0 \}$ is a positive cone on $A$.
	This bi-ordering does not have any proper non-trivial convex subgroups. \qedhere
	\end{enumerate}
\end{example}

\begin{example} \label{Exa:InfiniteLex}
	Suppose that $A$ is a free abelian group with a countably infinite basis $\calB$.
	Choose a total ordering of $\calB$.
	For example, we could index the elements of $\calB$ by the integers or rational numbers.
	Every element $x\in A$ can be represented uniquely as a linear sum of elements
	\[
		x=\sum_{v\in\calB} k_v v
	\]
	where $\{k_{v}\}_{v\in\calB}$ is a set of integer coefficients, all but finite many of which are zero.
	For each $a$, let $v(a)$ be the maximal basis element in $\calB$ such that $a$ has a non-zero coefficient.
	Define $P$ to be the set of all elements $a\in A$ such that the coefficient of $v(a)$ is positive.
\end{example}

We will now construct bi-orderings of a group by choosing a sequence of nested normal subgroups in which every quotient of consecutive terms in a free abelian group.
\refthm{Levi} ensures that our quotients have bi-orders.
We can then use the bi-orders of the quotients to define a bi-ordering of the entire group.

\begin{construction}
\label{Con:BiorderConstructionSchemeFin} 
Let $G$ be a group.
Suppose that we have a nested sequence of normal subgroups of $G$
\[
	G=C_1\triangleright C_2\triangleright \cdots \triangleright C_n
\]
where $C_i/C_{i+1}$ is a free abelian group for $1\leq i <n$.
For each $1\leq i <n$, let $p_i\from C_i\to C_i/C_{i+1}$ be the quotient map.
Choose a positive cone of $C_n$ invariant under conjugation by all elements of $G$.
For every $0\leq i<n$, choose a positive cone $P_i$ for the quotient $C_i/C_{i+1}$ invariant under conjugation of elements of $G$ so for all $g\in C_i$ and $x\in G$, we have
$p_i(g)\in P_i$ if and only if $p_i(xgx^{-1})\in P_i$.
Define an element $g\in G$ to be positive when either of the following occurs:
\begin{enumerate}
	\item $g\in C_i-C_{i+1}$ and $p_i(g)\in P_i$ for some $1\leq i < n$
	\item $g\in P_n$ \qedhere
\end{enumerate}
\end{construction}

We can extend this construction to an infinite nested sequence of subgroups when their intersection is trivial.

\begin{construction}
\label{Con:BiorderConstructionSchemeInf}
Let $G$ be a group.
Suppose that we have a infinite nested sequence of normal subgroups of $G$
\[
	G=C_1\triangleright C_2\triangleright C_3\triangleright \cdots
\]
where $C_i/C_{i+1}$ is a free abelian group and $\bigcap C_i=\{1\}$.
For each $i$, let $p_i\from C_i\to C_i/C_{i+1}$ be the quotient map.
Choose a positive cone $P_i$ for the quotient $C_i/C_{i+1}$ invariant under conjugation by elements of $G$.
Define an element $g\in G$ to be positive when $g\in C_i-C_{i+1}$ and $p_i(g)\in P_i$ for some $i$.
\end{construction}

A key example of the use of this general strategy is in constructing bi-orders on free groups.

\begin{definition}
	Let $G$ be a finite rank free group.
	Let $G_1 = G$ and for $n>1$ let $G_n=[G_{n-1},G]$.
	The sequence  $\{G_n\}_{n\geq1}$ is the \emph{lower central series} of $G$.
	For $n\geq1$, let $A_n=G_n/G_{n+1}$ be the \emph{$n$th lower central series quotient}.
\end{definition}

\begin{remark} 
	Each $A_n$ is a free abelian group. 
	Hall's work~\cite{Hall34} implies that the rank of $A_n$ is less than or equal to $2^n$. 
\end{remark}

\begin{theorem}[Magnus~\cite{Magnus35}]
	\label{Thm:TrivialIntersection}
	The set of terms of the lower central series of a free group has trivial intersection. \qed
\end{theorem}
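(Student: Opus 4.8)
The plan is to invoke the classical \emph{Magnus embedding} of a free group into a ring of non-commutative formal power series. Write $F$ for the free group, with lower central series $F = F_1 \triangleright F_2 \triangleright \cdots$, fix a free basis $\{x_\iota\}$, and let $\Lambda = \ZZ\langle\langle X_\iota\rangle\rangle$ be the ring of formal power series in the non-commuting indeterminates $X_\iota$, with $I \triangleleft \Lambda$ the two-sided ideal of series with zero constant term. Every $1 + f$ with $f \in I$ is a unit, its inverse being the well-defined power series $\sum_{k \ge 0}(-f)^k$, so there is a homomorphism $\mu \from F \to \Lambda^{\times}$ with $\mu(x_\iota) = 1 + X_\iota$ (and hence $\mu(x_\iota^{-1}) = 1 - X_\iota + X_\iota^2 - \cdots$). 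One observes at the outset that $\bigcap_n I^n = \{0\}$ --- a series in $I^n$ has no terms of degree $< n$ --- and therefore $\bigcap_n (1 + I^n) = \{1\}$.

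First I would check that $\mu(F_n) \subseteq 1 + I^n$ for all $n$, by induction. The set $1 + I^n$ is a subgroup of $\Lambda^\times$, and $\mu(x_\iota^{\pm 1}) \in 1 + I$ gives the base case. For the inductive step, $F_{n+1}$ is generated by commutators $[a,b] = a^{-1}b^{-1}ab$ with $a \in F_n$ and $b \in F$; writing $\mu(a) = 1 + u$ with $u \in I^n$ and $\mu(b) = 1 + v$ with $v \in I$, one has
\[
  \mu([a,b]) - 1 \;=\; \mu(a)^{-1}\mu(b)^{-1}\bigl(\mu(a)\mu(b) - \mu(b)\mu(a)\bigr) \;=\; \mu(a)^{-1}\mu(b)^{-1}(uv - vu),
\]
and $uv - vu \in I^{n+1}$, so $\mu([a,b]) \in 1 + I^{n+1}$, giving $\mu(F_{n+1}) \subseteq 1 + I^{n+1}$.

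Second --- and this is the step I expect to be the real obstacle --- I would prove $\mu$ is injective. For $w \ne 1$ written in syllable form $w = x_{\iota_1}^{a_1}\cdots x_{\iota_r}^{a_r}$ with $\iota_s \ne \iota_{s+1}$ and each $a_s$ a nonzero integer, consider the coefficient in $\mu(w) = \prod_s (1 + X_{\iota_s})^{a_s}$ of the monomial $X_{\iota_1}^{|a_1|}\cdots X_{\iota_r}^{|a_r|}$. Using $(1 + X_{\iota_s})^{a_s} = \sum_{d \ge 0}\binom{a_s}{d}X_{\iota_s}^{d}$, this coefficient equals $\sum \prod_s \binom{a_s}{d_s}$ summed over all ways of writing that monomial as a concatenation $X_{\iota_1}^{d_1}\cdots X_{\iota_r}^{d_r}$; since adjacent syllables use different indices, a short induction on $s$ shows the only such decomposition is $d_s = |a_s|$ for every $s$. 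Hence the coefficient is $\prod_s \binom{a_s}{|a_s|}$, which is nonzero because $\binom{a}{|a|}\ne 0$ for every nonzero integer $a$. So $\mu(w) \ne 1$.

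Finally, if $w \in \bigcap_n F_n$ then $\mu(w) \in \bigcap_n (1 + I^n) = \{1\}$, so $\mu(w) = 1$ and $w = 1$ by injectivity. Note that the argument is insensitive to the rank of $F$: only the finitely many generators occurring in a given word are ever used, so the infinite-rank case needs no separate treatment. The formal parts --- that $\mu$ is a well-defined homomorphism, the containments $\mu(F_n) \subseteq 1 + I^n$, and $\bigcap_n(1+I^n) = \{1\}$ --- are routine; the combinatorial lemma isolating the monomial $X_{\iota_1}^{|a_1|}\cdots X_{\iota_r}^{|a_r|}$ is the one place demanding care.
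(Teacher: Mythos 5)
The paper does not prove this statement; it is cited as a theorem of Magnus and followed immediately by a QED mark, so there is no argument in the paper to compare against. Your proof is a correct reconstruction of the classical Magnus-embedding argument. The structural pieces all check out: $\mu(x_\iota) = 1 + X_\iota$ extends to a homomorphism into the unit group since $1 + I$ consists of units; the containment $\mu(F_n) \subseteq 1 + I^n$ follows by induction from the identity $\mu([a,b]) - 1 = \mu(a)^{-1}\mu(b)^{-1}(uv - vu)$ with $u \in I^n$, $v \in I$, which indeed lands in $I^{n+1}$; and injectivity of $\mu$ together with $\bigcap_n(1 + I^n) = \{1\}$ finish the proof. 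The remark that only finitely many generators occur in a given word correctly handles infinite rank, although the paper only needs rank two.

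The one spot I would tighten is the uniqueness claim in the injectivity step. Your phrase ``a short induction on $s$'' undersells the care needed: one cannot immediately conclude $d_1 = |a_1|$, because if some later $d_j = 0$ then a still-later block with $\iota_{j'} = \iota_1$ could contribute to the leading run of $X_{\iota_1}$'s. The clean argument is a syllable count. The target word $X_{\iota_1}^{|a_1|}\cdots X_{\iota_r}^{|a_r|}$ has exactly $r$ maximal runs, since all $|a_s| > 0$ and adjacent $\iota_s$ differ. The word $X_{\iota_1}^{d_1}\cdots X_{\iota_r}^{d_r}$, after discarding blocks with $d_s = 0$, is a concatenation of $\#\{s : d_s > 0\}$ blocks and therefore has at most that many runs. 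Equality of the two words forces $\#\{s : d_s > 0\} \geq r$, hence every $d_s > 0$; then adjacent blocks have distinct letters by hypothesis, so the blocks are precisely the runs, and matching them gives $d_s = |a_s|$ for all $s$. With that repaired, the coefficient is $\prod_s \binom{a_s}{|a_s|}$, which is nonzero because $\binom{a}{|a|} = 1$ for $a > 0$ and $\binom{a}{-a}$ is a product of $|a|$ negative integers over $|a|!$ for $a < 0$, so $\mu(w) \neq 1$ and the proof is complete.
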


The following definition is due to Rolfsen~\cite[Appendix A]{KinRolf18}.

\begin{definition}
\label{Def:Standard}
	A \emph{standard bi-order} on a free group $G$ is one constructed as follows.
	Applying \refthm{Levi}, choose a positive cone $P_n$ on each lower central series quotient $A_n$.
	By \refthm{TrivialIntersection}, for each nontrivial element $g\in G$ there is a unique positive integer $n(g)$ such that $g\in G_{n(g)}-G_{n(g)+1}$.
	Let $[g]_n$ be the class of $g$ in $A_n$.
	We define a positive cone $P$ on $G$ by:
	\begin{equation}
    		\label{positivecone}
    		P:=\{g\in G \mid g \neq 1 \text{ and } [g]_{n(g)}\in P_{n(g)}\}
    		. \qedhere
	\end{equation}

\end{definition}

\begin{proposition}
	\label{Prop:StandardConvexDef}
    A bi-ordering of the free group is standard if and only if every term of the group's lower central series is convex with respect to that bi-order.
\end{proposition}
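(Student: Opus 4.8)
The plan is to prove both implications via the coset characterization of convexity from \refprop{CosetOrder}: a subgroup $C$ of a bi-ordered group $(G,<,P)$ is convex if and only if every left coset $gC$ is contained in $P$, contained in $P^{-1}$, or equal to $C$. Throughout I write $P$ for the positive cone of the bi-order under consideration, recall from \refthm{TrivialIntersection} that each $g\neq 1$ has a well-defined depth $n(g)$ with $g\in G_{n(g)}-G_{n(g)+1}$, and write $[g]_n$ for the image of $g$ in $A_n=G_n/G_{n+1}$ (using additive notation for this abelian group).

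For the forward direction, assume $<$ is standard, built from positive cones $P_n$ on the $A_n$ via \eqref{positivecone}, and fix $k$; I would show $G_k$ is convex. Given $g$ with $gG_k\neq G_k$ we have $g\notin G_k$ and $g\neq 1$, so (the series being decreasing) $n\defeq n(g)<k$, hence $G_k\subseteq G_{n+1}$. For every $c\in G_k$ one checks routinely that $gc\in G_n-G_{n+1}$, so $n(gc)=n$, and $[gc]_n=[g]_n$ since $[c]_n=0$. Therefore, by \eqref{positivecone}, whether $gc\in P$ depends only on whether $[g]_n\in P_n$, uniformly in $c$. As $[g]_n\neq 0$ it lies in $P_n$ or in $P_n^{-1}$, so $gG_k\subseteq P$ or $gG_k\subseteq P^{-1}$, and convexity of $G_k$ follows from \refprop{CosetOrder}.

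For the converse, assume every $G_k$ is convex with respect to $<$; I would produce the data witnessing standardness. Each $G_{n+1}$ is characteristic, hence normal in $G_n$, and it is convex in $G_n$ (a subgroup convex in $G$ is convex in any intermediate subgroup). So \refcor{ConvexQuotientBiorder}, applied to $G_n$ with the restriction of $<$ and to its normal convex subgroup $G_{n+1}$, yields a bi-order on $A_n$ with positive cone $P_n\defeq\{\,gG_{n+1}:g\in G_n,\ gG_{n+1}\subseteq P\,\}$. It remains to check that the standard positive cone determined by these $P_n$ via \eqref{positivecone} equals $P$. For $g\neq 1$ with $n=n(g)$, the coset $gG_{n+1}$ differs from $G_{n+1}$, so by \refprop{CosetOrder} (using convexity of $G_{n+1}$ in $G$) it lies in $P$ or in $P^{-1}$; since $g\in gG_{n+1}$, it lies in $P$ iff $g\in P$. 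Hence $[g]_n\in P_n\iff gG_{n+1}\subseteq P\iff g\in P$, which is exactly the defining condition \eqref{positivecone}, so $<$ is standard.

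I do not anticipate a genuine obstacle: the argument is bookkeeping around \refprop{CosetOrder} and \refcor{ConvexQuotientBiorder}. The one point needing care is the equivalence ``$g\in P\iff gG_{n+1}\subseteq P$'' for $g$ of depth $n$ in the converse, which is precisely what ties the convexity hypothesis to the induced order on $A_n$ and makes the two constructions coincide. (That any positive cone $P_n$ on $A_n$ automatically yields a legitimate conjugation-invariant cone in \refdef{Standard} uses that $G$ acts trivially on each $A_n$, since $[g,x]\in G_{n+1}$ whenever $x\in G_n$; but this is needed for \refdef{Standard} to make sense, not for the proof of the proposition.)
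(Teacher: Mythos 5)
Your proof is correct. Your backward direction mirrors the paper's (extract $P_n$ on each $A_n$ via \refcor{ConvexQuotientBiorder} using convexity of $G_{n+1}$, then check that the resulting standard cone equals $P$), though you spell out the final identification $[g]_{n(g)}\in P_{n(g)}\iff g\in P$ via \refprop{CosetOrder}, which the paper leaves as an assertion. Your forward direction takes a somewhat different route: the paper recognizes the standard cone restricted to $G_n$ as an instance of \reflem{constructpositivecone} inducing $P_n$ on $A_n$, invokes \refprop{ConvexInduced} to get $G_{n+1}$ convex in $G_n$, and then closes by induction (implicitly, transitivity of convexity) to conclude $G_n$ is convex in $G$. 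You instead verify convexity of $G_k$ in $G$ directly: for $g\notin G_k$ and $c\in G_k$ you observe $n(gc)=n(g)$ and $[gc]_{n(g)}=[g]_{n(g)}$, so the defining formula \eqref{positivecone} makes membership in $P$ constant across the coset $gG_k$, and \refprop{CosetOrder} gives convexity. This is more elementary and self-contained---it avoids both intermediate propositions and the transitivity step---while the paper's version foregrounds how the standard construction fits the general positive-cone-building machinery it has already set up. Both arguments are sound; yours arguably makes the mechanism of the forward implication more transparent.
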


\begin{proof}
    Let $(G,<,P)$ be a bi-ordered free group.
    If $P$ is standard, then by \reflem{constructpositivecone} the restriction of the bi-order to $G_{n-1}$ induces the bi-ordering of $A_n$ given by $P_n$.
	Thus by \refprop{ConvexInduced}, the next term $G_n$ is convex in $G_{n-1}$.
    It follows by induction that $G_n$ is convex in $G$.
    
    Suppose that $G_n$ is convex for all $n$.
    Then the bi-order of $G$ induces a bi-order $<_{n}$ on $A_n:=G_n/G_{n+1}$ for each $n$.
    Define $P_n$ to be the positive cone of $(A_n,<_{n})$.
    $P$ can be described as the standard bi-order of $G$ determined by the positive cones $P_n$.
\end{proof}

\begin{proposition} \label{Prop:PreserveStandardOrders}
	Suppose that $G$ is a finite rank free group
	and $\phi,\phi'\in\aut(G)$ are automorphisms such that the induced maps on the abelianization of $G$ are the same.
	Then, $\phi$ preserves a standard bi-ordering of $G$ if and only if $\phi'$ does.
\end{proposition}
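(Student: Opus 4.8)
The plan is to reduce everything to the induced maps on the lower central series quotients $A_n = G_n/G_{n+1}$. Each $G_n$ is a characteristic subgroup of $G$, so every $\phi \in \aut(G)$ carries $G_n$ to $G_n$ and hence induces an automorphism $\phi_n$ of $A_n$; note that $\phi_1$ is precisely the induced map on the abelianization $G^{\mathrm{ab}}$.

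First I would record the following reformulation of what it means for $\phi$ to preserve a standard bi-order. If $P$ is the standard bi-order determined by positive cones $(P_n)_{n\ge 1}$ as in \eqref{positivecone}, then for a nontrivial $g \in G$ we have $n(\phi(g)) = n(g)$, since $\phi(G_k) = G_k$ for all $k$, and $[\phi(g)]_{n(g)} = \phi_{n(g)}\big([g]_{n(g)}\big)$ by the definition of $\phi_n$. Since every nonzero element of $A_n$ is $[g]_n$ for some $g \in G_n \setminus G_{n+1}$, it follows that $\phi$ preserves $P$ if and only if, for every $n$ and every nonzero $x \in A_n$, we have $x \in P_n \iff \phi_n(x) \in P_n$; and as $P_n$ is a positive cone and $\phi_n$ a bijection fixing $0$, this says exactly that $\phi_n(P_n) = P_n$ for all $n$.

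The crucial point is that $\phi_n$ depends only on $\phi_1$. I would prove this by induction on $n$, the base case $n=1$ being a tautology. From the definition $G_n = [G_{n-1}, G]$, the quotient $A_n$ is generated as an abelian group by the classes of commutators $[g,h]$ with $g \in G_{n-1}$ and $h \in G$; moreover the class of $[g,h]$ in $A_n$ depends only on the class of $g$ in $A_{n-1}$ and the class of $h$ in $A_1$, so we obtain a well-defined pairing $A_{n-1} \times A_1 \to A_n$ whose image generates $A_n$. This is standard lower-central-series calculus; in fact $\bigoplus_n A_n$ is the free Lie ring on $A_1$ by Magnus and Witt, but only the generation statement is needed. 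The pairing is natural for automorphisms, since $\phi([g,h]) = [\phi(g),\phi(h)]$, so $\phi_n$ is determined on generators by $\phi_{n-1}$ and $\phi_1$, and hence by the inductive hypothesis by $\phi_1$ alone.

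Combining the two steps finishes the proof: if $\phi$ and $\phi'$ induce the same map on $G^{\mathrm{ab}}$ then $\phi_1 = \phi_1'$, hence $\phi_n = \phi_n'$ for all $n$, hence a family of positive cones $(P_n)$ is fixed by all the $\phi_n$ precisely when it is fixed by all the $\phi_n'$. Therefore $\phi$ preserves some standard bi-order if and only if $\phi'$ does. The main obstacle is the inductive claim that the action on $A_n$ is governed by the action on the abelianization; once the naturality of the commutator-induced pairing $A_{n-1}\times A_1 \to A_n$ is set up, everything else is bookkeeping with \eqref{positivecone}.
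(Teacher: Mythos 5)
Your proof is correct, but it takes a genuinely different route from the paper's. Both arguments reduce the question to showing that the induced maps $\phi_n$ on the lower central series quotients $A_n = G_n/G_{n+1}$ are determined by $\phi_1$, and both observe that preservation of a standard positive cone $P$ is equivalent to $\phi_n(P_n)=P_n$ for all $n$. Where you differ is in how you establish that $\phi_n$ is determined by $\phi_1$. The paper invokes Perron--Rolfsen's Lemma~4.5, which supplies a natural injection $\iota_n\colon A_n \hookrightarrow A_1^{\otimes n}$ intertwining $\phi_n$ with $\phi_1^{\otimes n}$; from the injectivity of $\iota_n$ it follows at once that $\phi_n = \phi'_n$ whenever $\phi_1 = \phi'_1$. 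You instead argue by induction, using the classical fact that the commutator induces a well-defined bilinear pairing $A_{n-1}\times A_1 \to A_n$ whose image generates $A_n$, and that this pairing is natural for automorphisms; this gives the same determination of $\phi_n$ by $\phi_1$. Your version is more self-contained (it does not lean on the Magnus-type embedding as a black box, only on routine lower-central-series calculus, e.g.\ $[G_i,G_j]\subseteq G_{i+j}$), while the paper's version is shorter given the cited lemma. Both are valid; the underlying mechanism --- naturality of the associated graded structure on the lower central series --- is the same.
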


\begin{proof}
	Consider a standard positive cone $P$ of $G$ determined by a collection of positive cones $P_n$ of $A_n=G_n/G_{n+1}$.
	Given an automorphism $\psi\in\aut(G)$, denote by $\psi_n$ the induced automorphisms on $A_n$.
	Then $\psi$ preserves $P$ if and only if $\psi_n(P_n)=P_n$ for all positive integers $n$.
	By a lemma of Perron and Rolfsen \cite[Lemma 4.5]{PerRolf03},
	there are canonical embeddings $\iota_n\from A_n \hookrightarrow A_1^{\otimes n}$ such that the following diagram commutes.
	\[
  	  \begin{tikzcd}
    		A_n \arrow[hookrightarrow]{r}{\iota_n} \arrow[swap]{d}{\psi_n} & A_1^{\otimes n} \arrow[swap]{d}{\psi_1^{\otimes n}}\\
		A_n \arrow[hookrightarrow]{r}{\iota_n} & A_1^{\otimes n}
  	  \end{tikzcd}
	\]
	
	Consider two automorphisms $\phi$ and $\phi'$ of $G$ that induce the same automorphism of the abelianization of $G$.
	In other words, we have that $\phi_1=\phi_1'$.
	Suppose that $\phi(P)=P$.
	Then for each $n$, we have that $\phi_n(P_n)=P_n$
	Thus, it follows that $\phi_1^{\otimes n}(\iota_n(P_n))=\iota_n(P_n)$.
	Since $\phi_1$ is equal to $\phi_1'$, it follows that $\phi'_n(P_n)=P_n$ for each $n$.
	Therefore, $\phi'(P)=P$.
\end{proof}

\section{Punctured torus bundles}
\label{Sec:PuncTorusBundles}

\begin{definition}
We define the \emph{(untwisted) punctured torus bundle} $M$ over the circle with monodromy $h$ as
\[
	M \cong \frac{T \times [0,1]}{(x,1) \sim (h(x), 0)} 
\]
where $T$ is a torus with a boundary component
and $h$ is an automorphism of $T$ which fixes $\partial T$ pointwise.
\end{definition}

\begin{proposition}
\label{Prop:BiOrderablePuncturedTorusBundles}
Suppose that $M$ is a hyperbolic punctured torus bundle. Then $\pi_1(M)$ is bi-orderable if and only if $M$ is untwisted.
\end{proposition}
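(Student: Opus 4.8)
The plan is to read off the Alexander polynomial of $M$ from its monodromy and then quote the Perron--Rolfsen and Clay--Rolfsen criteria (\refthm{PerRolf} and \refthm{ClayRolf}). First I would fix notation: write $\pi_1(M)\cong G\rtimes_{h_*}\ZZ$ with $G\cong\pi_1(T)$ free of rank two, and let $h_+\from H_1(T;\ZZ)\to H_1(T;\ZZ)$ be the automorphism induced by $h$ on first homology. Since $h$ fixes $\partial T$ pointwise it preserves orientation, so under an identification $H_1(T;\ZZ)\cong\ZZ^2$ we have $h_+\in\SL_2(\ZZ)$; set $a:=\operatorname{tr}(h_+)\in\ZZ$. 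The infinite cyclic cover of $M$ associated with the projection $\pi_1(M)\to\ZZ$ deformation retracts onto a fiber copy of $T$, with the generating deck transformation acting on $H_1(T;\ZZ)\cong\ZZ^2$ as $h_+$; hence the Alexander polynomial is
\[
	\Delta_M(t)\doteq\det\!\left(tI-h_+\right)=t^2-at+1 .
\]

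Next I would bring in hyperbolicity. By the Nielsen--Thurston classification on the once-punctured torus together with Thurston's hyperbolization theorem for mapping tori, $M$ is hyperbolic exactly when $h$ is pseudo-Anosov, and on the once-punctured torus this holds exactly when $h_+$ is an Anosov matrix, i.e.\ $|a|>2$. As $a$ is an integer, hyperbolicity forces $|a|\ge 3$, so $\Delta_M$ has discriminant $a^2-4>0$ and thus two distinct real roots $\lambda_\pm=\tfrac12\bigl(a\pm\sqrt{a^2-4}\bigr)$ with $\lambda_+\lambda_-=1$ and $\lambda_++\lambda_-=a$; in particular both roots are nonzero and share the sign of $a$. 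I would then recall that $M$ is untwisted precisely when $a>0$ (equivalently, when $h_+$ has positive eigenvalues); combined with $|a|\ge 3$, this says $M$ is untwisted iff $a\ge 3$ and twisted iff $a\le -3$, and exactly one of these occurs.

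Finally I would split into the two cases. If $M$ is untwisted then $a\ge 3$, so both roots of $\Delta_M$ are real and positive, and \refthm{PerRolf} gives that $\pi_1(M)$ admits a (standard) bi-ordering, in particular is bi-orderable. If $M$ is twisted then $a\le -3$, so both roots of $\Delta_M$ are negative and $\Delta_M$ has no positive real root; the contrapositive of \refthm{ClayRolf} then shows $\pi_1(M)$ is not bi-orderable. Since hyperbolicity of $M$ guarantees exactly one of the two cases, this gives the asserted equivalence.

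I do not anticipate a genuine obstacle, since the two cited theorems carry the argument; the only points needing care are (i) the identification $\Delta_M(t)\doteq\det(tI-h_+)$ for a surface bundle over the circle, (ii) the implication ``$M$ hyperbolic $\Rightarrow |a|>2$'', which is where the hyperbolicity hypothesis is consumed and which relies on the Nielsen--Thurston trichotomy for the once-punctured torus plus Thurston hyperbolization, and (iii) translating the definition of ``untwisted'' into the inequality $a>0$. Step (ii) is the one I would state most carefully.
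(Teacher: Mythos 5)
Your proposal is correct and follows essentially the same route as the paper: identify the Alexander polynomial roots as the two real eigenvalues of $h_+$, observe they share a sign determined by whether the bundle is twisted or untwisted, and then apply Theorems~\ref{Thm:PerRolf} and~\ref{Thm:ClayRolf} for the two directions. You simply spell out the computation ($\Delta_M(t)=t^2-at+1$, hyperbolicity forces $|a|>2$) that the paper treats as known.
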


\begin{proof}
For hyperbolic punctured torus bundles the Alexander polynomial has two real roots.
These roots have the same sign, which is positive in the untwisted case and negative in the twisted case.
For untwisted punctured torus bundles then, $\pi_1(M)$ is bi-orderable by \refthm{PerRolf}.
Conversely, if  $\pi_1(M)$ is bi-orderable then by \refthm{ClayRolf} at least one of the roots is positive, and therefore the bundle is untwisted.
\end{proof}

Up to isotopy, the monodromy $h$ is a product of Dehn twists about two simple closed curves on $T$ which intersect once. By choosing a base point $x$ on $\partial T$, $h$ induces a well-defined automorphism $h_*$ of $\pi_1(T,x)$ which is a rank 2 free group.
The fundamental group of $M$ is given by
\[
    \pi_1(M) \cong G \rtimes_{h_*} \ZZ
\]

Here $G \cong \langle \alpha, \beta \rangle$ is the fundamental group of the punctured torus, generated by loops $\alpha$ and $\beta$ as shown in \reffig{PuncturedTorus}.  
Fixing notation, let $\ZZ \cong \langle \tau \rangle$.

\begin{figure}[htbp]
\centering 
\labellist
\small\hair 2pt
\pinlabel $\tau$ [r] at 80 142
\pinlabel $\alpha$ [br] at 154 65
\pinlabel $\beta$ [r] at 78 52
\pinlabel $h$ [l] at 263 140
\endlabellist
\includegraphics[width = 0.4\textwidth]{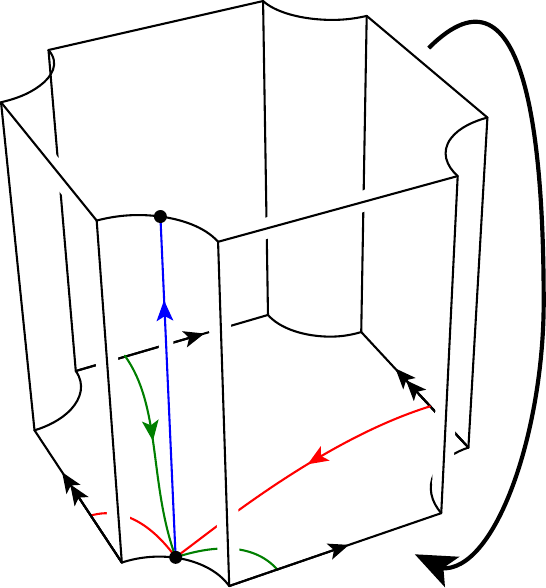}
\caption{Generators of the fundamental group for a punctured torus bundle.} 
\label{Fig:PuncturedTorus}
\end{figure}

\begin{remark}
\label{Rem:Fig8Twists}
Consider the figure eight knot complement, with monodromy given by a single Dehn twist around a curve parallel to $\beta$ (as in \reffig{PuncturedTorus}), followed by a single Dehn twist around a curve parallel to $\alpha$.
One can check that the images of $\alpha$ and $\beta$ under the monodromy are $\tau^{-1}\alpha\tau = h(\alpha) = \alpha\beta\alpha$ and $\tau^{-1}\beta\tau = h(\beta) = \beta\alpha$. 
\end{remark}

We will use the following well-known fact a few times.
See Farb-Margalit \cite[Section~13.1]{FarMar12} for details.

\begin{lemma}
\label{Lem:Eigenvalues}
A pseudo-Anosov map $A \in \SL(2, \ZZ)$ cannot have eigenvalues of either $1$ or $-1$. \qed
\end{lemma}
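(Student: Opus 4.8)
The plan is to reduce to the Nielsen--Thurston classification of mapping classes of the once-punctured torus, under which a mapping class is pseudo-Anosov precisely when its induced matrix $A\in\SL(2,\ZZ)$ acting on first homology is \emph{hyperbolic}, that is, satisfies $|\operatorname{tr}(A)|>2$; see Farb--Margalit~\cite[Section~13.1]{FarMar12}. Granting this, it suffices to show that if $A$ has $1$ or $-1$ as an eigenvalue then $|\operatorname{tr}(A)|=2$.

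First I would use that $\det A=1$, so the characteristic polynomial of $A$ is $\lambda^2-\operatorname{tr}(A)\lambda+1$. Substituting $\lambda=1$ gives $\operatorname{tr}(A)=2$, in which case the polynomial is $(\lambda-1)^2$ and both eigenvalues equal $1$; substituting $\lambda=-1$ gives $\operatorname{tr}(A)=-2$, in which case the polynomial is $(\lambda+1)^2$ and both eigenvalues equal $-1$. In either case $|\operatorname{tr}(A)|=2$, so $A$ is not hyperbolic and the corresponding mapping class is not pseudo-Anosov --- a contradiction.

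Alternatively, and more self-containedly, one can argue straight from the definition of a pseudo-Anosov map: such a map expands one member of a transverse pair of invariant measured foliations by a stretch factor $\lambda>1$, which must appear as a real eigenvalue of the linear action $A$. But if $1$ (respectively $-1$) is an eigenvalue of $A$, then since the two eigenvalues multiply to $\det A=1$ the other eigenvalue is also $1$ (respectively $-1$); hence every eigenvalue of $A$ has absolute value $1$ and no stretch factor $\lambda>1$ can occur. I do not anticipate any genuine obstacle: the one point to handle with care is invoking the standard dictionary between mapping classes of the once-punctured torus and elements of $\SL(2,\ZZ)$, which is entirely routine.
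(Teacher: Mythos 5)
Your proposal is correct and matches the route the paper intends: the paper offers no proof of its own and simply cites Farb--Margalit \cite[Section~13.1]{FarMar12}, which is exactly the trace/hyperbolicity dictionary for mapping classes of the once-punctured torus that you invoke. The elementary determinant-and-trace calculation you give (eigenvalue $\pm1$ together with $\det A=1$ forces $|\operatorname{tr}A|=2$, so $A$ is not hyperbolic and hence not pseudo-Anosov) is the standard argument and fills in the citation correctly.
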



\begin{lemma}
	\label{Lem:H1Bundle}
	$\rk H_1(M;\ZZ)=1$
\end{lemma}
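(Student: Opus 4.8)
The plan is to compute $H_1(M;\ZZ)$ directly from the semidirect product presentation $\pi_1(M) \cong G \rtimes_{h_*} \ZZ$, where $G = \langle \alpha, \beta \rangle$ is free of rank $2$. Abelianizing, we have $H_1(M;\ZZ) = \pi_1(M)^{\absup} \cong (\ZZ^2 / \Image(h_+ - \Id)) \oplus \ZZ$, where $h_+ \from H_1(T;\ZZ) \cong \ZZ^2 \to \ZZ^2$ is the map induced by the monodromy on the abelianization of $G$ and $\Id$ is the identity. This is the standard computation for a mapping torus: the $\ZZ$ factor (generated by the image of $\tau$) always survives, and the contribution of the fiber is the cokernel of $h_+ - \Id$ acting on $H_1$ of the fiber.

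First I would record this cokernel description carefully, noting that $\tau^{-1} x \tau = h_*(x)$ in $\pi_1(M)$, so in the abelianization the relations become $\bar{x} = \overline{h_+(x)}$ for all $x \in H_1(T;\ZZ)$, i.e. $(h_+ - \Id)(x) = 0$. Hence $H_1(M;\ZZ) \cong \ZZ \oplus \coker(h_+ - \Id)$. Then I would argue that $\coker(h_+ - \Id)$ is finite, which is equivalent to $\det(h_+ - \Id) \neq 0$, i.e. $1$ is not an eigenvalue of $h_+$. This is exactly where \reflem{Eigenvalues} comes in: since $M$ is hyperbolic (we are in the running assumption that our punctured torus bundles are untwisted hyperbolic), the monodromy is pseudo-Anosov, so $h_+ \in \SL(2,\ZZ)$ is a pseudo-Anosov matrix and therefore has no eigenvalue equal to $1$ (nor $-1$). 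Thus $h_+ - \Id$ is injective with nonzero determinant, so its cokernel is a finite abelian group, and $\rk H_1(M;\ZZ) = \rk(\ZZ \oplus \text{finite}) = 1$.

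The main obstacle is essentially bookkeeping rather than any deep point: one must make sure the abelianization of the semidirect product is computed correctly (that the $\ZZ$ factor contributes freely and is not killed, and that the fiber's $H_1$ contributes only its cokernel under $h_+ - \Id$), and one must invoke the hyperbolicity hypothesis to know $h$ is pseudo-Anosov so that \reflem{Eigenvalues} applies. A clean way to present the first part is to use the Wang sequence (or the five-term exact sequence in homology for $1 \to G \to \pi_1(M) \to \ZZ \to 1$): the sequence $H_1(T;\ZZ) \xrightarrow{h_+ - \Id} H_1(T;\ZZ) \to H_1(M;\ZZ) \to \ZZ \to 0$ gives $\rk H_1(M;\ZZ) = 1 + \dim_{\QQ} \ker(h_+ - \Id \otimes \QQ)$, and the kernel is trivial precisely because $1$ is not an eigenvalue. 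Either route works; I would pick the explicit abelianization since the presentation is already in hand.
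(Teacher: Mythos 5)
Your proof is correct and takes essentially the same route as the paper: abelianize the semidirect product to get $H_1(M;\ZZ) \cong \coker(h_+ - \Id) \oplus \ZZ$, then invoke \reflem{Eigenvalues} to conclude that $1$ is not an eigenvalue of $h_+$, so the cokernel is torsion. The paper writes the cokernel as the presented abelian group $\langle [\alpha],[\beta] \st [\alpha]=h_+[\alpha],\ [\beta]=h_+[\beta]\rangle$, but this is the same object and the same argument.
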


\begin{proof}
	Let $[\alpha]$ and $[\beta]$ be the integral homology classes of the curves $\alpha$ and $\beta$, as in \reffig{PuncturedTorus}.
	Let $A\in\SL(2,\ZZ)$ be the map $h_+$ on homology with respect to the basis $[\alpha],[\beta]$.
	The homology group $H_1(M;\ZZ)$ decomposes as $H_1(M;\ZZ) \cong M_T \oplus \langle t\rangle$ where
	\[
		M_T=\langle [\alpha],[\beta] : [\alpha]=h_+[\alpha],[\beta]=h_+[\beta]\rangle .
	\]
	$M_T$ is torsion if and only if $\det(I-A)\neq 0$.
	Since $h$ is pseudo-Anosov, 1 cannot be an eigenvalue of $A$ by \reflem{Eigenvalues}.
	Therefore $\det(I-A)=\ch_A(1)\neq 0$.
	Therefore, $M_T$ is torsion and $\rk H_1(M;\ZZ)=1$.
\end{proof}

\section{Maximal convex subgroups} 
\label{Sec:G2MaxConvex}

\subsection{Proof of \refthm{toporder}}
\label{Sec:ProofOfTopOrder}
Recall that $G$ is the free group on two elements.
The following proposition tells us that any pair of bi-orders on a group $\pi_1(M) \cong G \rtimes_h \ZZ$ either agree or precisely disagree at the level of the $\ZZ$ term.
(This is not the case for a general semidirect product of a group with the integers.)

\begin{proposition}
\label{Prop:MaxConvexPi1M}
    The maximal convex subgroup of any bi-ordering of $\pi_1(M)$ is $G$, the fundamental group of the punctured torus.
\end{proposition}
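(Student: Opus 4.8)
The plan is to show that $G$ is convex with respect to any bi-ordering $<$ of $\pi_1(M) \cong G \rtimes_h \ZZ$ and then argue it is maximal. Since $\pi_1(M)$ is finitely generated, \refprop{FiniteGeneratorMaxConvex} guarantees a unique maximal convex subgroup $C$ with $\pi_1(M)/C$ abelian, $C$ normal, and $C$ preserved by any order-preserving automorphism. The quotient map $f \from \pi_1(M) \to H_1(\pi_1(M)) = H_1(M;\ZZ)$ kills $[\pi_1(M),\pi_1(M)]$, and since abelian groups are bi-orderable exactly when torsion-free (\refthm{Levi}), the map onto the free part $H_1(M;\ZZ)/\mathrm{torsion}$ — which by \reflem{H1Bundle} has rank $1$, so is just $\ZZ$ — has convex kernel by \refprop{ConvexInduced}. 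Call this kernel $N$; it contains $[\pi_1(M),\pi_1(M)]$ and the torsion, hence contains $G_2 = [G,G]$, and more importantly $N$ is a convex subgroup with $\pi_1(M)/N \cong \ZZ$.

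First I would identify $N$ precisely. The abelianization of $\pi_1(M) = G\rtimes_h \ZZ$ is $(G^{\mathrm{ab}}/(h_+ - I)G^{\mathrm{ab}}) \oplus \ZZ$, and by the proof of \reflem{H1Bundle} the first summand is finite (torsion), because $1$ is not an eigenvalue of $h_+$ by \reflem{Eigenvalues}. Therefore the free quotient $\pi_1(M) \twoheadrightarrow \ZZ$ factors through the projection $G \rtimes_h \ZZ \to \ZZ$ onto the second factor — that is, $N$ is exactly the subgroup $\{(g,n) : n = 0\} = G$ together possibly with... no: precisely, the composite $G \rtimes_h \ZZ \to (G\rtimes_h\ZZ)^{\mathrm{ab}} \to \ZZ$ sends $(g,n)$ to $n$ (up to sign), so its kernel is exactly $G$. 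Hence $N = G$, and $G$ is convex with respect to $<$.

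Next I would show $G$ is the \emph{maximal} proper convex subgroup, i.e. that it equals $C$. Since convex subgroups are totally ordered by inclusion (\refprop{NestedConvex}) and $G$ is convex and proper, we have $G \subseteq C$. If $G \subsetneq C$, then $C$ contains some $(g,n)$ with $n \neq 0$, and since $C$ is a subgroup containing all of $G$ and some element projecting to a nonzero integer, $C$ projects onto a nonzero (hence finite-index) subgroup of $\ZZ$; combined with $G \subseteq C$ this forces $C$ to have finite index in $\pi_1(M)$. But a proper convex subgroup of a bi-ordered group has infinite index (the cosets are linearly ordered with no least element of a nontrivial quotient — more precisely $\pi_1(M)/C$ is a nontrivial bi-orderable, hence infinite, group by \refcor{ConvexQuotientBiorder} applied after noting $C$ is normal via \refprop{FiniteGeneratorMaxConvex}), a contradiction. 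Hence $C = G$.

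The main obstacle I anticipate is making the identification $N = G$ fully rigorous — specifically, checking that the convex kernel of the map to $\ZZ = H_1(M;\ZZ)/\mathrm{torsion}$ really is the fiber subgroup $G$ and not something larger that only agrees with $G$ after abelianizing. This is where \reflem{H1Bundle} and \reflem{Eigenvalues} do the essential work: they ensure $H_1(M;\ZZ)$ has rank $1$ so that the abelianization map detects precisely the $\ZZ$-coordinate, which is exactly the statement (flagged in the remark before the proposition) that any two bi-orders of $\pi_1(M)$ agree or exactly disagree on the $\ZZ$ factor. The rest is bookkeeping with the Conrad results on convex subgroups.
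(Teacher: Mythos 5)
Your argument has a genuine gap at the very first step: the claim that the kernel $N$ of the map $\pi_1(M) \to H_1(M;\ZZ)/\mathrm{torsion} \cong \ZZ$ is convex ``by \refprop{ConvexInduced}.'' Read \refprop{ConvexInduced} carefully: it says that \emph{if} the given bi-ordering $<$ of $\pi_1(M)$ induces a bi-ordering on the target, \emph{then} the kernel is convex; the ``in particular'' clause only gives that the target is bi-orderable if and only if the kernel is convex with respect to \emph{some} bi-ordering of $\pi_1(M)$. Neither version says that $N$ is convex with respect to an \emph{arbitrary} fixed bi-ordering $<$, and the mere bi-orderability of $\ZZ$ (via \refthm{Levi}) does not give you this. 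In fact ``$<$ induces a bi-order on $\ZZ$, i.e.\ $G$ is convex'' is precisely the nontrivial content of the proposition; you cannot assume it. Since the rest of your proof (using \refprop{NestedConvex} to conclude $G \subseteq C$, then the finite-index contradiction) relies on already knowing $G$ is convex, the gap propagates through the whole argument.

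The fix is to reverse the order, which is what the paper does. Start from the maximal convex subgroup $C$ guaranteed by \refprop{FiniteGeneratorMaxConvex}: it is normal, $\pi_1(M)/C$ is abelian, and by \refcor{ConvexQuotientBiorder} the quotient $\pi_1(M)/C$ is a nontrivial bi-orderable (hence torsion-free) group. Being abelian and torsion-free and a quotient of $H_1(M;\ZZ)$, which has rank $1$ by \reflem{H1Bundle}, the quotient must be $\ZZ$. Now both $C$ and $G$ are kernels of surjections $\pi_1(M) \twoheadrightarrow \ZZ$, and because $\rk H_1(M;\ZZ) = 1$ there is only one such surjection up to sign, so $C = G$. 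Your computation identifying the kernel of $\pi_1(M) \to H_1(M;\ZZ)/\mathrm{torsion}$ with the fiber subgroup $G$, and your use of \reflem{Eigenvalues} to see that $1$ is not an eigenvalue of $h_+$, are both correct and exactly what is needed for this last uniqueness step; you just need to place them after the identification of $\pi_1(M)/C$ rather than before it.
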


\begin{proof}
	Let $<$ be a bi-order of $\pi_1(M)$.
	Since $\pi_1(M)$ is finitely generated, it has a maximal convex subgroup $C$ by Proposition \ref{Prop:FiniteGeneratorMaxConvex}.

	By \refcor{ConvexQuotientBiorder}, $H:=\pi_1(M)/C$ is a nontrivial bi-orderable subgroup.
	By \refthm{Holder}, $H$ is abelian.
	Thus, $H$ is isomorphic to a nontrivial bi-orderable quotient of $H_1(M;\ZZ)$.
	By Lemma \ref{Lem:H1Bundle}, we have that $\rk H_1(M;\ZZ) = 1$ and $H\cong\ZZ$.
	Thus $\pi_1(M)/C \cong \ZZ$.
	Also $\pi_1(M) \cong G \rtimes_h \ZZ$, so both $C$ and $G$ are kernels of surjections from $\pi_1(M)$ to $\ZZ$.
	But because $\rk H_1(M;\ZZ) = 1$, there is a unique (up to sign) surjection from $\pi_1(M)$ to $\ZZ$.
	Therefore $C=G$.
\end{proof}

Thus a bi-ordering of $\pi_1(M) \cong G \rtimes_h \ZZ$ is determined by a bi-ordering of the free group $G$ (together with a choice of which way we order $\ZZ$).
The next proposition tells us that when bi-ordering the free group $G$ subject to invariance under $h$, we initially  have no choice but to follow the standard construction, as given in \refdef{Standard}.

\begin{proposition}
\label{Prop:MaxConvexG}
	Consider any bi-order $<$ on $\pi_1(M)$ and the induced bi-order $<_G$ on $G$.
    The maximal convex subgroup of $G$ with respect to $<_G$ is $G_2=[G,G]$.
\end{proposition}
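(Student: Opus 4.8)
The plan is to run the same argument as in \refprop{MaxConvexPi1M} but now applied to the free group $G$ with the bi-order $<_G$. Since $G$ is finitely generated, \refprop{FiniteGeneratorMaxConvex} gives a unique maximal convex subgroup $C \triangleleft G$, and \refthm{Holder} (via \refcor{ConvexQuotientBiorder}) tells us that $G/C$ is a nontrivial torsion-free abelian group, hence a nontrivial bi-orderable quotient of the abelianization $A_1 = H_1(G;\ZZ) \cong \ZZ^2$. So $C \supseteq G_2 = [G,G]$, and $G/C$ is isomorphic to either $\ZZ^2$ or $\ZZ$. The goal is to rule out the case $G/C \cong \ZZ^2$, i.e.\ $C = G_2$, which would force $C$ to be a strictly larger proper subgroup — but $G_2$ is already maximal among proper subgroups of the form containing the commutator subgroup only if we can show no rank-one intermediate quotient survives the $h$-invariance. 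Actually the real point is the opposite: if $C \supsetneq G_2$ then $C$ is the kernel of a surjection $G \to \ZZ$, i.e.\ $C$ corresponds to a rank-one quotient of $A_1$; I must show this is incompatible with $h_*$-invariance of $<_G$.

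So the key step is: because $<$ is a bi-order on $\pi_1(M) \cong G \rtimes_{h_*} \ZZ$, conjugation by $\tau$ acts on $G$ as $h_*$ and preserves $<_G$ (as in the proof of \refprop{BOInvariantOrder}). By the last clause of \refprop{FiniteGeneratorMaxConvex}, an automorphism preserving $<_G$ must fix the maximal convex subgroup $C$, so $h_*(C) = C$. If $G/C \cong \ZZ$, then $C$ is the kernel of a surjection $\phi\from G \to \ZZ$ that factors through $A_1 \cong \ZZ^2$; the subgroup $\ker(\bar\phi) \subseteq A_1$ is then a rank-one direct summand of $\ZZ^2$ invariant under the induced map $h_+ = A \in \SL(2,\ZZ)$. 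That means $A$ has an eigenvector in $\ZZ^2$, hence a rational (indeed integer) eigenvalue; since $\det A = 1$ the eigenvalues are reciprocal integers, so they are both $1$ or both $-1$. This contradicts \reflem{Eigenvalues}, since $h$ is pseudo-Anosov. Therefore $G/C \cong \ZZ^2$, i.e.\ $C = G_2$.

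I expect the main obstacle to be pinning down the eigenvector/summand argument cleanly: one needs that an $h_+$-invariant rank-one subgroup of $\ZZ^2$ gives a genuine eigenvector of $A$ over $\QQ$ (clear: a generator $v$ of the subgroup satisfies $Av = \lambda v$ for some $\lambda \in \QQ$ since $Av$ lies in the same rank-one subgroup), and then that $\lambda \in \ZZ$ with $\lambda \cdot \lambda' = \det A = 1$ for the two eigenvalues, forcing $\lambda = \lambda' = \pm 1$, which \reflem{Eigenvalues} forbids. A small point to be careful about is that the rank-one subgroup $\ker(\bar\phi)$ need not a priori be a direct summand, but the kernel of a surjection $\ZZ^2 \to \ZZ$ always is; alternatively, one can just say $C/G_2$ is a rank-one subgroup of $A_1$ with torsion-free quotient, which is automatically a summand. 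Once the contradiction is in hand, the proposition follows immediately, and combined with \refprop{MaxConvexPi1M} this gives \refthm{toporder}.
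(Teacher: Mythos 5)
Your proof is correct and takes essentially the same approach as the paper's: both use \refprop{FiniteGeneratorMaxConvex} and \refthm{Holder} to reduce to ruling out $G/C \cong \ZZ$, and then both derive a contradiction with \reflem{Eigenvalues} from the observation that the $h_+$-invariant rank-one subgroup $p(C) \subseteq \ZZ^2$ would force $h_+$ to have eigenvalue $\pm 1$. Your write-up is somewhat more explicit in the eigenvalue bookkeeping (rational eigenvalue, then integer, then reciprocal integers), but these are the same steps the paper leaves implicit.
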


\begin{proof}
	Let $<$ be a bi-order of $\pi_1(M)$.
	By Proposition \ref{Prop:BOInvariantOrder}, this induces a bi-order $<_G$ on $G$ which is invariant under $h$.
	Since $G$ is finitely generated, it has a maximal convex subgroup $C$ by Proposition \ref{Prop:FiniteGeneratorMaxConvex}.

	By \refthm{Holder}, $H:=G/C$ is a nontrivial abelian bi-orderable subgroup.
	Thus, $H$ is a nontrivial bi-orderable quotient of $H_1(T;\ZZ)$
	so either $H\cong\ZZ$ or $H\cong\ZZ^2$.
	
	Assume for a contradiction that $H\cong\ZZ$.
	Let $p \from G\to H_1(T;\ZZ)$ be the abelianization map.
	The monodromy $h$ induces a map $h_+$ on $H_1(T;\ZZ)$.
	By \refprop{FiniteGeneratorMaxConvex}, $h(C)=C$ so $h_+(p(C))=p(h(C))=p(C)$.
	Since $H\cong\ZZ$ then $p(C)\cong\ZZ$,
	and since $h_+(p(C))=p(C)$ the eigenvalues are either $1$ or $-1$. 
	This is then a contradiction to \reflem{Eigenvalues}.
	Therefore, $H\cong\ZZ^2$.
	It follows that $C=G_2$.
\end{proof}

\begin{proof}[Proof of \refthm{toporder}]
This follows from Propositions~\ref{Prop:MaxConvexPi1M} and~\ref{Prop:MaxConvexG}.
\end{proof}

\subsection{Monodromy action on $G_2$}

Consider the abelian cover $\pi \from \hat{T} \to T$ corresponding to the subgroup $G_2$.
We orient the components of $\partial \hat{T}$ counterclockwise.
Choose a component of $\partial \hat{T}$ and call it $\gamma_0$.
Let $\hat{x}$ be the lift of the basepoint $x$ on $\gamma_0$.
Elements of $G$ can be lifted to homotopy classes of paths in $\hat{T}$ based at $\hat{x}$.

\begin{definition}
\label{Def:hHat}
Let $\hat{h}\from \hat{T}\to \hat{T}$ be the lift of $h$ that preserves $\gamma_0$. 
Let $\hat{h}_+$ be the map induced on $H_1(\hat{T};\ZZ)$ by $\hat{h}$.
\end{definition}

\reffig{ActionOfh} illustrates $\hat{h}$ in the example of the figure eight knot complement. 

\begin{figure}[htbp]
\subfloat[The lift of the commutator $\alpha\beta\alpha^{-1}\beta^{-1}$ is homotopic to a loop $\gamma_0$ in $\hat{T}$. ]{
\centering 
\labellist
\small\hair 2pt
\pinlabel $\gamma_0$ [tr] at 150 150
\pinlabel $\hat{x}$ [bl] at 162 162
\endlabellist
\includegraphics[height = 5cm]{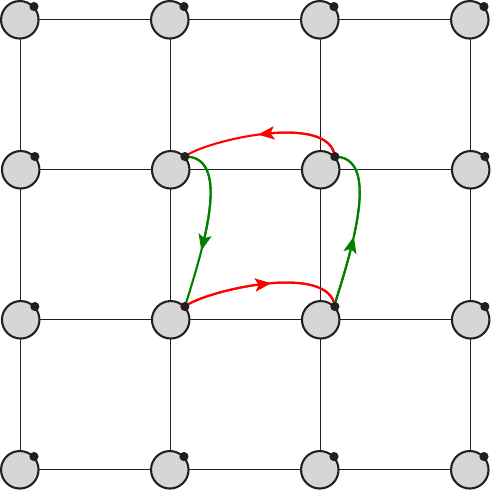}
}
\quad
\subfloat[The image $\hat{h}(\gamma_0)$ is homotopic to $\gamma_0$. ]{
\centering 
\labellist
\small\hair 2pt
\pinlabel $\gamma_0$ [tr] at 222 150
\pinlabel $\hat{x}$ [bl] at 234 162
\endlabellist
\includegraphics[height = 5cm]{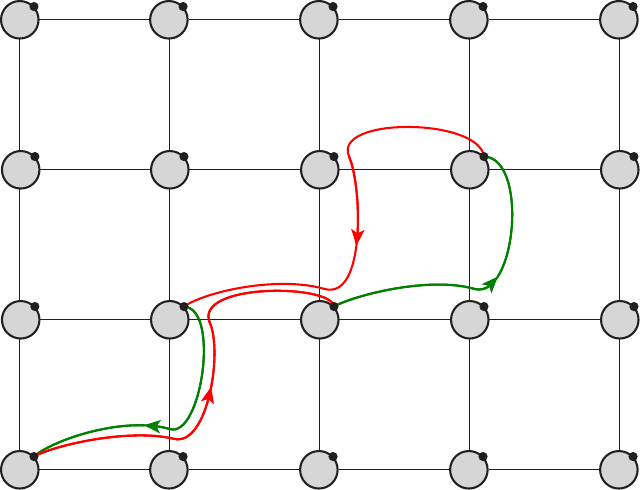}
}

\caption{The map $\hat{h}$ acts on the abelian cover $\hat{T}$, and preserves $\gamma_0$. Here we show the image for the figure eight knot complement. By \refrem{Fig8Twists}, $h(\alpha) = \alpha\beta\alpha$ and $h(\beta) = \beta\alpha$. Note that $\gamma_0$ is fixed by $\hat{h}$.} 
\label{Fig:ActionOfh}
\end{figure}

\begin{definition} \label{Def:Basis}
Let $\calB$ be the countable basis of $H_1(\hat{T},\ZZ)$  consisting of the homology classes corresponding to the components of $\partial \hat{T}$.
\end{definition}
\begin{notation}
To lighten the notation, we will use the symbol $\gamma$ for a generic component of $\partial \hat{T}$ and also for its homology class.
\end{notation}

Recall that $\pi$ is the covering map from $\hat{T}$ to $T$.

\begin{lemma}
\label{Lem:q}
Suppose that $\gamma$ is a component of $\partial \hat{T}$. 
Let $\eta$ be a path in $\hat{T}$ from $\gamma_0$ to $\gamma$.
Then the class $[\pi(\eta)] \in H_1(T; \ZZ)$ is independent of the choice of $\eta$.
\end{lemma}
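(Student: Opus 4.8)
Lemma \ref{Lem:q} asserts that for a path $\eta$ in $\hat{T}$ running from the chosen boundary component $\gamma_0$ to another boundary component $\gamma$, the homology class $[\pi(\eta)] \in H_1(T;\ZZ)$ does not depend on the choice of $\eta$. Here's how I'd prove it.

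First I would pin down the meaning of $[\pi(\eta)] \in H_1(T;\ZZ)$. Although $\eta$ is a path rather than a loop, each component of $\partial\hat{T}$ maps onto $\partial T$, so both endpoints of $\pi(\eta)$ lie on $\partial T$, and we close $\pi(\eta)$ up to a loop by appending an arc of $\partial T$. The class of the resulting loop in $H_1(T;\ZZ)$ does not depend on the appended arc, because any two choices differ by a multiple of $[\partial T]$, and $[\partial T] = \bigl[[\alpha,\beta]\bigr] = 0$ in $H_1(T;\ZZ)$. The same computation shows that $\pi_*\from H_1(\hat{T};\ZZ)\to H_1(T;\ZZ)$ is identically zero, since $\pi_1(\hat{T}) = G_2 = [G,G]$ lies in the kernel of the abelianization $G\to H_1(T;\ZZ)$. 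This vanishing is the engine of the whole argument.

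The plan is then to identify the closed-up class of $\pi(\eta)$ with a deck transformation of the cover $\hat{T}\to T$: namely the unique one carrying $\gamma_0$ to $\gamma$, which is manifestly independent of $\eta$. Recall that this cover is regular with deck group $G/G_2 = H_1(T;\ZZ)$, and that the monodromy action sends a loop in $T$ to the deck transformation labelled by its class in $H_1(T;\ZZ)$. Given $\eta$ running from $p_0 \in \gamma_0$ to $p_1 \in \gamma$, write $L$ for the loop obtained by closing $\pi(\eta)$ up with an arc $c\subset\partial T$, and let $g\in H_1(T;\ZZ)$ denote $[L]$. Lifting $L$ starting at $p_0$, the lift ends at $g\cdot p_0$; but this lift is $\eta$ followed by the lift of $c$ beginning at $p_1$, and since $c\subset\partial T$ while $p_1\in\gamma$, that lift of $c$ stays inside the component $\gamma$. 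Hence $g\cdot p_0\in\gamma$, so $g$ carries $\gamma_0$ to $\gamma$.

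Finally I would check that the deck transformation carrying $\gamma_0$ to $\gamma$ is unique, i.e.\ that the deck group acts freely on the set of components of $\partial\hat{T}$ lying over $\partial T$. Since $\partial T$ represents $[\alpha,\beta] \in G_2 = \pi_1(\hat{T})$, the curve $\partial T$ lifts to loops and each component of $\partial\hat{T}$ maps homeomorphically onto $\partial T$; a deck transformation fixing such a component setwise therefore fixes it pointwise, and a nontrivial deck transformation of a covering space has no fixed points. So $g$ is determined by $\gamma$ alone, which is the claim.

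I expect no serious obstacle — the statement is essentially formal. The one place that demands care is the bookkeeping around the closing-up convention for $[\pi(\eta)]$; alternatively one can sidestep the monodromy discussion by splicing two competing paths $\eta_1,\eta_2$ together with arcs in $\gamma$ and in $\gamma_0$ into a single loop $\lambda$ of $\hat{T}$ and invoking $\pi_*[\lambda]=0$ directly, at the cost of a slightly fussier chain-level comparison of the closed-up classes.
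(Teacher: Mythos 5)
Your proof is correct, but it takes a genuinely different and more elaborate route than the paper. The paper's argument is the one you mention in passing at the end as an "alternative": take two competing paths $\eta$ and $\eta'$, observe that $\eta'\cdot\eta^{-1}$ (or more carefully, $\eta'$ spliced to $\eta^{-1}$ with boundary arcs) is a loop in $\hat{T}$, hence its image in $T$ lies in $\pi_1(\hat{T}) = [G,G]$, hence it maps to $0$ in $H_1(T;\ZZ)$, giving $[\pi(\eta)] = [\pi(\eta')]$ immediately. Your main argument instead identifies the closed-up class of $\pi(\eta)$ with the deck transformation carrying $\gamma_0$ to $\gamma$ via the monodromy action of $G/G_2 = H_1(T;\ZZ)$, then shows this deck transformation is unique because the deck group acts freely on $\pi_0(\partial\hat{T})$. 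This buys you a cleaner conceptual picture (the class $q(\gamma)$ literally \emph{is} the deck element taking $\gamma_0$ to $\gamma$, which is a useful way to think about \refdef{q} and \refdef{Shift} later in the paper), and your careful discussion of the closing-up convention and the vanishing of $[\partial T]$ makes explicit a point that the paper's proof glosses over — as written, $\eta'\cdot\eta^{-1}$ need not be a concatenable loop since the endpoints of $\eta$ and $\eta'$ on $\gamma_0$ and $\gamma$ need not agree, so one must implicitly insert boundary arcs and note those contribute nothing homologically. The cost is that your proof requires more machinery (monodromy action, freeness of the deck action on boundary components) for what is, in the end, a two-line lemma. Both arguments are sound.
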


\begin{proof}
Suppose that $\eta$ and $\eta'$ are two such paths. 
Then the concatenation $\eta' \cdot \eta^{-1}$ is a loop in $\hat{T}$.
Thus $\pi(\eta' \cdot \eta^{-1}) \in [G, G]$.
Therefore $0 = [\pi(\eta' \cdot \eta^{-1})] =   [\pi(\eta') \cdot \pi(\eta^{-1})] =  [\pi(\eta') \cdot \pi(\eta)^{-1}] =  [\pi(\eta')] - [\pi(\eta)]$.
Thus $[\pi(\eta)] = [\pi(\eta')]$. 
\end{proof}

We may therefore make the following definition.
\begin{definition}
\label{Def:q}
For any choice of $\eta$ as in \reflem{q},
we define 
\[
q:\mathcal{B}\to H_1(T;\ZZ) \cong \ZZ^2 \subset \RR^2 \quad \mbox{by}\quad q(\gamma)= [\pi(\eta)].\qedhere
\]
\end{definition}

\begin{lemma}
The map $q$ is a bijection.
\end{lemma}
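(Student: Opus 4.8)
The plan is to identify $q$ with the composition of two elementary bijections: the map matching each component of $\partial\hat T$ with a point of the fiber $\pi^{-1}(x)$, and the map matching $\pi^{-1}(x)$ with the deck group $\Deck(\hat T/T)$, which for this cover is canonically $G/G_2 = H_1(T;\ZZ)$.

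The crucial geometric input is that every component of $\partial\hat T$ maps homeomorphically onto $\partial T$ under $\pi$. To see this, recall that $[\partial T] = [\alpha,\beta] \in [G,G] = G_2$, so the boundary loop of $T$ based at $x$ lifts to a loop based at $\hat x$; this lifted loop lies in the component $\gamma_0$ and traverses a single copy of $\partial T$, so it can close up only if $\pi$ restricted to $\gamma_0$ is a homeomorphism. Since $\hat T\to T$ is a regular cover, its deck group acts transitively on the components of $\pi^{-1}(\partial T)=\partial\hat T$, and deck transformations commute with $\pi$, so every component maps homeomorphically onto $\partial T$. In particular each component $\gamma$ contains exactly one lift $\hat x_\gamma$ of $x$ (with $\hat x_{\gamma_0}=\hat x$), and because $\pi^{-1}(x)\subset\partial\hat T$ and distinct components are disjoint, $\gamma\mapsto\hat x_\gamma$ is a bijection from the set of components of $\partial\hat T$ to $\pi^{-1}(x)$. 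The deck group acts freely and transitively on $\pi^{-1}(x)$, so sending a deck transformation $d$ to $d(\hat x)$ gives a bijection $\Deck(\hat T/T)\to\pi^{-1}(x)$; and the standard identification $\Deck(\hat T/T)\cong G/G_2=H_1(T;\ZZ)$ sends $d$ to the class $[g]$, where $g$ is any loop at $x$ whose lift from $\hat x$ ends at $d(\hat x)$.

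It remains to check that, under these identifications, $q$ is exactly this composition. Given a component $\gamma$, the path $\eta$ of \reflem{q} may be chosen to run from $\hat x$ to $\hat x_\gamma$ (moving an endpoint around a boundary component changes $[\pi(\cdot)]$ by $[\partial T]=0$, so this does not affect $q(\gamma)$); then $\pi(\eta)$ is a loop at $x$ whose lift from $\hat x$ is $\eta$, ending at $\hat x_\gamma$, so by definition $q(\gamma)=[\pi(\eta)]$ is precisely the $H_1(T;\ZZ)$-class attached to the deck transformation carrying $\hat x$ to $\hat x_\gamma$. Hence $q$ is a composition of bijections and is itself a bijection. Equivalently, one can argue directly: surjectivity holds because any class $c\in H_1(T;\ZZ)$ is represented by a loop $g$ at $x$, whose lift from $\hat x$ ends at $\hat x_\gamma$ for a unique $\gamma$ with $q(\gamma)=c$; injectivity holds because $q(\gamma)=q(\gamma')$ makes $\pi(\eta)^{-1}\pi(\eta')$ a loop at $x$ of trivial homology class, hence an element of $G_2=\pi_*\pi_1(\hat T,\hat x_\gamma)$ (using normality of $G_2$ to shift basepoints), so it lifts to a loop at $\hat x_\gamma$, forcing $\hat x_\gamma=\hat x_{\gamma'}$ and $\gamma=\gamma'$.

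The main obstacle is the geometric claim that the boundary components of $\hat T$ are embedded copies of $\partial T$; once that is in hand, the rest is bookkeeping with the standard dictionary between fibers, deck transformations, and $H_1$, with only mild care needed about basepoints.
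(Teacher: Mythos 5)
Your proposal is correct. The paper proves the lemma directly by lifting paths: for surjectivity, lift $\alpha^m\beta^n$ from $\hat{x}$ and observe the endpoint lies on a boundary component $\gamma$ with $q(\gamma)=m[\alpha]+n[\beta]$; for injectivity, run the calculation of the preceding lemma backwards to conclude $\pi(\eta')\pi(\eta)^{-1}\in G_2$, so the lift closes up, forcing $\gamma=\gamma'$. Your ``direct'' alternative at the end of your proposal is precisely this argument, so you've independently reproduced it. Your primary route --- identifying $q$ as the composite of the bijections
\[
\{\text{components of }\partial\hat T\} \longleftrightarrow \pi^{-1}(x) \longleftrightarrow \Deck(\hat T/T) \cong G/G_2 = H_1(T;\ZZ)
\]
--- packages the same idea more structurally. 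What it buys is a cleaner conceptual picture (boundary components are indexed by the deck group, essentially for free), at the cost of needing the auxiliary fact that each component of $\partial\hat T$ maps homeomorphically onto $\partial T$ and so contains exactly one lift of $x$. Your justification of that fact ($[\alpha,\beta]\in G_2$ forces degree one on $\gamma_0$, and regularity of the cover propagates this to all components) is correct and is a useful observation that the paper leaves implicit. Both approaches ultimately rest on the same lifting criterion: a loop in $T$ lifts to a loop in $\hat T$ exactly when it lies in $G_2$.
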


\begin{proof}
Recall that the curves $\alpha$ and $\beta$ are our generators for $G$, the fundamental group of the punctured torus $T$,  as shown in \reffig{PuncturedTorus}.
For any element $m[\alpha] +n[\beta]$ of $H_1(T;\ZZ)$, if we lift the loop $\alpha^m\cdot\beta^n$ to $\hat{T}$ we find a path based at $\hat{x}$ which ends on a loop $\gamma$ such that $q(\gamma) = m[\alpha] +n[\beta]$.
Thus $q$ is surjective.

Now suppose that $\gamma$ and $\gamma'$ are elements of $\bdy \hat{T}$ with $q(\gamma) = q(\gamma')$.
Let $\eta$ and $\eta'$ be paths for $\gamma$ and $\gamma'$ as in \reflem{q}.
So $[\pi(\eta)] = [\pi(\eta')]$.
Following the calculation in the proof of \reflem{q} backwards, we deduce that $\pi(\eta' \cdot \eta^{-1}) \in [G, G]$. 
This implies that $\eta' \cdot \eta^{-1}$ is a loop in $\hat{T}$, and so $\gamma = \gamma'$.
Thus $q$ is injective.
\end{proof}

\begin{lemma}
\label{Lem:ActionOfHatH+}
We have the following.
\begin{enumerate}
\item 
\label{Itm:Contained}
$\hat{h}_+(\calB) \subset \calB$
\item 
\label{Itm:Commutes}
$q \circ \hat{h}_+ = h_+ \circ q$
\end{enumerate}
\end{lemma}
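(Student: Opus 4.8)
The plan is to prove both statements essentially by unwinding the definitions of $\hat h$, $\calB$, and $q$, using the fact that $\hat h$ is a homeomorphism of $\hat T$ that is a lift of $h$.

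For part \refitm{Contained}: a basis element of $H_1(\hat T;\ZZ)$ is, by \refdef{Basis}, the homology class of a component $\gamma$ of $\bdy\hat T$. Since $\hat h\from\hat T\to\hat T$ is a homeomorphism, it permutes the components of $\bdy\hat T$; hence $\hat h(\gamma)$ is again a boundary component, and so $\hat h_+[\gamma]=[\hat h(\gamma)]\in\calB$. (One should note that $\hat h$ preserves the orientations we chose on $\bdy\hat T$: it is orientation-preserving on $\hat T$, being a lift of the orientation-preserving $h$, so it sends counterclockwise boundary orientations to counterclockwise ones, and no sign appears.) This gives $\hat h_+(\calB)\subset\calB$.

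For part \refitm{Commutes}: fix a component $\gamma$ of $\bdy\hat T$ and a path $\eta$ in $\hat T$ from $\gamma_0$ to $\gamma$, so that $q(\gamma)=[\pi(\eta)]$ by \refdef{q}. To compute $q(\hat h_+(\gamma))=q([\hat h(\gamma)])$, I need a path from $\gamma_0$ to $\hat h(\gamma)$. The natural candidate is $\hat h\circ\eta$, which runs from $\hat h(\gamma_0)$ to $\hat h(\gamma)$; since $\hat h$ preserves $\gamma_0$ by \refdef{hHat}, the endpoint $\hat h(\hat x)$ lies on $\gamma_0$, and after adjusting by an arc inside $\gamma_0$ (which does not change the class $[\pi(\cdot)]$, by \reflem{q}) we may use $\hat h\circ\eta$ as a valid choice of path. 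Then
\[
q(\hat h_+(\gamma)) = [\pi(\hat h\circ\eta)] = [(\pi\circ\hat h)(\eta)] = [(h\circ\pi)(\eta)] = h_+[\pi(\eta)] = h_+(q(\gamma)),
\]
where the middle equality is exactly the statement that $\hat h$ is a lift of $h$, i.e. $\pi\circ\hat h=h\circ\pi$. This is the identity $q\circ\hat h_+=h_+\circ q$.

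I expect the only genuine subtlety — the "main obstacle" — to be the basepoint bookkeeping in part \refitm{Commutes}: $\hat h\circ\eta$ need not start exactly at $\hat x$, so one must invoke \reflem{q} to see that the basepoint-adjustment arc lives in $\gamma_0$ and therefore contributes nothing to $[\pi(\cdot)]$, and one must make sure $\hat h\circ\eta$ genuinely goes from (a point of) $\gamma_0$ to $\hat h(\gamma)$. Everything else is immediate from the definitions. I would carry out the steps in the order above: first the orientation remark, then part \refitm{Contained}, then the path-tracking argument for part \refitm{Commutes}.
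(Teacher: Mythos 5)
Your proof is correct and follows essentially the same route as the paper's: part (1) by noting $\hat h$ is a self-homeomorphism and so permutes boundary components, and part (2) by applying $\pi\circ\hat h = h\circ\pi$ to a path $\eta$ from $\gamma_0$ to $\gamma$. Your extra remarks — that $\hat h$ is orientation-preserving so no sign appears, and that basepoint bookkeeping for $\hat h\circ\eta$ is handled by the well-definedness in \reflem{q} — are sound but slightly more cautious than necessary, since $h$ fixes $\partial T$ pointwise, which forces $\hat h$ to be the identity on $\gamma_0$ (so $\hat h\circ\eta$ already starts at $\hat x$, as the paper implicitly uses).
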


\begin{proof}
Since $\hat{h}$ is a self-homeomorphism of $\hat{T}$, it sends boundary loops to boundary loops.  
Thus $\gamma \in \calB$ implies that $\hat{h}_+(\gamma) \in \calB$ and we have \refitm{Contained}.

Recall that $\pi\from \hat{T} \to T$ is the covering map.  
Then by definition, $h \circ \pi = \pi \circ \hat{h}$.
Let $\gamma \in \calB$ and let $\eta$ be a path in $\hat{T}$ from $\gamma_0$ to $\gamma$. Then
\[
h_+ \circ q(\gamma) = h_+([\pi(\eta)]) = [h \circ \pi(\eta)] = [\pi (\hat{h}(\eta))]
\]
Note that $\hat{h}(\eta)$ is a path based at $\hat{x}$ which ends on $\hat{h}(\gamma)$.
So we get that $ [\pi (\hat{h}(\eta))] = q([\hat{h}(\gamma)]) = q \circ \hat{h}_+[(\gamma)]$ and we have proved \refitm{Commutes}.
\end{proof}

\begin{definition} 
\label{Def:Shift}
Let $\gamma \in \calB$. The \emph{shift} of $\gamma$ by $(m,n)$ is 
\[
s_{(m,n)}(\gamma) = q^{-1}\big((m,n) + q(\gamma)\big). \qedhere
\] 
\end{definition}

\begin{definition}
Let $p_1:G\to\ZZ^2$ be the abelianization map.
Let $p_2:G_2\to G_2/[G_2,G_2]$ be the quotient map.
\end{definition}

\begin{lemma} 
	\label{Lem:Shift}
	Suppose that $g \in G_2$ and $x \in G$. 
	Suppose that the homology class $p_1(x)$ in $H_1(T,\ZZ)$ is  $m[\alpha]+n[\beta]$
	and that the homology class $p_2(g)$ in $H_1(\hat{T},\ZZ)$ is
	\[
		\sum_{\gamma\in\calB} k_\gamma \gamma.  
	\]
	Then the homology class of $p_2(x g x^{-1})$ in $H_1(\hat{T},\ZZ)$ is 
	\[
		\sum_{\gamma\in\calB} k_\gamma s_{(m,n)}(\gamma)
	\]
	where $s_{(m,n)}(\gamma)$ is the shift of $\gamma$ by $(m,n)$, as in \refdef{Shift}.
\end{lemma}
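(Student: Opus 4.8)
The plan is to unwind the statement through the covering space $\pi \from \hat{T} \to T$ and track how conjugation by $x$ permutes the boundary components of $\hat{T}$. First I would reduce to the case where $g$ is a single commutator, or rather to the case where $p_2(g)$ is a single basis element $\gamma \in \calB$; since $p_2(xgx^{-1})$ depends linearly (additively) on $p_2(g)$ — conjugation is an automorphism of $G_2$, hence induces an automorphism of the abelianization $H_1(\hat T;\ZZ)$ — it suffices to prove that this induced automorphism sends a basis element $\gamma$ to $s_{(m,n)}(\gamma)$ when $p_1(x) = m[\alpha] + n[\beta]$.

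The key geometric observation is the following. A loop $\gamma_g$ in $\hat T$ representing $\gamma = p_2(g) \in \calB$ is the lift, based at $\hat x$, of a loop in $T$ representing $g \in G_2$; because $\gamma$ is a \emph{boundary} component, I would arrange $\gamma_g$ to actually be (freely homotopic to) that boundary loop. Now conjugating $g$ by $x$ corresponds, on the level of based paths in $\hat T$, to prepending the lift $\tilde x$ of $x$ at $\hat x$, traversing (a translate of) $\gamma_g$, and then returning along $\tilde x$. Since $x \in G$ acts on $\hat T$ as the deck transformation determined by $p_1(x) = (m,n) \in H_1(T;\ZZ) \cong \ZZ^2 \cong \Deck(\hat T / T)$, the loop $\gamma_g$ gets carried to the boundary component obtained by applying that deck transformation. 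Identifying $\Deck(\hat T/T)$ with $H_1(T;\ZZ)$ acting on $\calB$ via the bijection $q$ from \refdef{q} — this is exactly the content that $q$ intertwines the relevant actions, analogous to \reflem{ActionOfHatH+}\refitm{Commutes} — the deck transformation by $(m,n)$ sends the boundary component with $q$-value $q(\gamma)$ to the one with $q$-value $(m,n) + q(\gamma)$, which is precisely $s_{(m,n)}(\gamma)$ by \refdef{Shift}. Summing over $\gamma$ weighted by the coefficients $k_\gamma$ then gives the claimed formula.

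The main obstacle I anticipate is bookkeeping the basepoints carefully: $p_2$ is defined on $G_2$ as the quotient map to $H_1(\hat T;\ZZ)$ using \emph{based} loops at $\hat x$, whereas the action of the deck group on $\calB$ is most naturally described using \emph{free} homotopy classes of boundary curves, and one must check that the based-loop class of $xgx^{-1}$ really does map, under $p_2$, to the shifted basis element rather than to some translate differing by a boundary term — i.e.\ that the correction paths $\tilde x$ genuinely cancel in homology. The cleanest way around this is to verify the identity $q \circ (\text{conjugation action of } p_1(x)) = (\text{translation by } q\text{-image of } p_1(x)) \circ q$ directly, mimicking the proof of \reflem{ActionOfHatH+}, using that $\pi \circ \tilde x$ is a loop in $T$ representing $x$ and that the concatenation $\tilde x \cdot (\text{translate of }\eta) \cdot \tilde x^{-1}$ projects to $x \pi(\eta) x^{-1}$; the homology computation is then the same two-line cancellation as in \reflem{q}. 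Everything else is linearity and the definition of the shift.
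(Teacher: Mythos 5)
Your proposal is correct and takes essentially the same route as the paper: both reduce the problem to seeing how conjugation by $x$ permutes the boundary components of $\hat T$, and then identify that permutation with $s_{p_1(x)}$ via the definition of $q$. The paper makes the reduction algebraically, decomposing $g$ over the generating set $\{\,y\,\alpha\beta\alpha^{-1}\beta^{-1}\,y^{-1} : y \in G\,\}$ of $G_2$ and comparing the paths $xy$ and $y$; you instead invoke linearity of the induced automorphism of $H_1(\hat T;\ZZ)$ and the deck-group action of $p_1(x)$ on $\hat T$, but the underlying computation is identical.
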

	
\begin{proof}
	First note that the set 
	$
	\{ y \alpha \beta \alpha^{-1} \beta^{-1} y^{-1} \mid y \in G \}
	$
	generates $G_2$.
	Therefore we can write 
	\[
	    g = \prod_i y_i (\alpha \beta \alpha^{-1} \beta^{-1})^{\pm1} y_i^{-1}
	\]
	for some finite collection of $y_i \in G$.
	For any $y \in G$, we have that the element $\gamma = p_2 (y \alpha \beta \alpha^{-1} \beta^{-1} y^{-1})$ is a member of $\calB$.	
	Thus we can write 
	\[
	    p_2(g) = \sum_i p_2( y_i (\alpha \beta \alpha^{-1} \beta^{-1})^{\pm1} y_i^{-1}) = \sum_{\gamma\in\calB} k_\gamma \gamma
	\]
         where $k_\gamma$ is found by summing the $\pm1$ exponents of the terms in which the path $y_i$ connects $\gamma_0$ to $\gamma$ in $\hat{T}$.
	Conjugating by $x$, we can write
	\[
	    x g x^{-1} =  \prod_i x y_i (\alpha \beta \alpha^{-1} \beta^{-1})^{\pm1} y_i^{-1} x^{-1}
	\]
	Recall that the homology class $p_1(x)$ is $m[\alpha]+n[\beta]\in H_1(T,\ZZ)$.
	It follows that if $p_2 (y \alpha \beta \alpha^{-1} \beta^{-1} y^{-1}) = \gamma$ then
	$p_2 (x y \alpha \beta \alpha^{-1} \beta^{-1} y^{-1} x^{-1})$ is the shift $s_{(m,n)}(\gamma)$.
	Thus 
	\[
		p_2(xgx^{-1}) = \sum_i p_2( x y_i (\alpha \beta \alpha^{-1} \beta^{-1})^{\pm1} y_i^{-1} x^{-1}) = \sum_{\gamma\in\calB} k_\gamma s_{(m,n)}(\gamma). \qedhere
	\]
\end{proof}

\subsection{Proof of \refthm{G3Convex}}
\label{Sec:ProofOfG3Convex}

We will prove \refthm{G3Convex} through a series of lemmas.
First note that since $C$ is convex, $<_2$ induces a bi-ordering of $G_2/C$.
By \refthm{Holder}, there is an injective homomorphism $f\from G_2/C\to\RR$ respecting the bi-orders.
Also, since $G_2/C$ is abelian the quotient map factors through quotient maps $p_2\from G_2 \to H_1(\hat{T};\ZZ)\cong G_2/[G_2,G_2]$ and $\psi\from H_1(\hat{T};\ZZ) \to G_2/C$.
Define $\mu\from H_1(\hat{T};\ZZ)\to\RR$ to be $f\circ\psi$ as in the following diagram.

\[
  \begin{tikzcd}
    G_2 \arrow{r}{p_2}  & H_1(\hat{T};\ZZ) \arrow[swap]{d}{\psi} \arrow{dr}{\mu} \\
	& G_2/C \arrow[hookrightarrow,swap]{r}{f} & \RR
  \end{tikzcd}
\]

Let $P$ be the positive cone associated to $<_2$.
Let $Q^+=\mu^{-1}(\RR^+)$ and $Q^-=\mu^{-1}(\RR^-)$.
Since $\mu\circ p_2$ and $<_2$ induce the bi-ordering of the image of $\mu$, we have
\[
Q^+=p_2(P-C), \quad Q^-=p_2(P^{-1}-C), \quad \mbox{and}\quad \ker\mu = p_2(C).
\]

\begin{lemma}\label{Lem:PreserveSign}
	Suppose that $\phi\in\aut(G_2)$ preserves $C$ and the bi-ordering $(<_2,P)$.
	Let $\phi_+\in\aut(H_1(\hat{T};\ZZ))$ be the automorphism induced by $\phi$.
	Then the sets $Q^+$, $Q^-$, and $\ker\mu$ are invariant under $\phi_+$.
\end{lemma}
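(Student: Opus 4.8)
The plan is to deduce the lemma formally from the naturality of the abelianization map $p_2\from G_2\to H_1(\hat T;\ZZ)$ together with the identifications $Q^+=p_2(P-C)$, $Q^-=p_2(P^{-1}-C)$, and $\ker\mu=p_2(C)$ recorded just before the statement. So there is no real computation to do; the work is purely bookkeeping.

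First I would observe that $[G_2,G_2]$ is a characteristic subgroup of $G_2$, so every $\phi\in\aut(G_2)$ descends to an automorphism of the quotient $H_1(\hat T;\ZZ)\cong G_2/[G_2,G_2]$. This descended map is exactly what is meant by $\phi_+$, and by construction it satisfies the intertwining relation $p_2\circ\phi=\phi_+\circ p_2$; in particular $\phi_+$ is a bijection of $H_1(\hat T;\ZZ)$.

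Next I would unpack the two hypotheses on $\phi$. Preserving the bi-order $(<_2,P)$ means precisely that $\phi(P)=P$, and since $\phi$ is a homomorphism this also gives $\phi(P^{-1})=\phi(P)^{-1}=P^{-1}$; preserving $C$ means $\phi(C)=C$. Because $\phi$ is a bijection of $G_2$, these equalities pass through the set-difference operation, so $\phi(P-C)=P-C$ and $\phi(P^{-1}-C)=P^{-1}-C$. Now combining this with the intertwining relation,
\[
\phi_+(Q^+)=\phi_+\big(p_2(P-C)\big)=p_2\big(\phi(P-C)\big)=p_2(P-C)=Q^+,
\]
and the identical computation with $P^{-1}-C$ in place of $P-C$ gives $\phi_+(Q^-)=Q^-$, while the computation with $C$ in place of $P-C$ gives $\phi_+(\ker\mu)=\ker\mu$.

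I do not expect a genuine obstacle here: the statement is a direct consequence of the definitions. The only points that require (minor) care are checking that $\phi_+$ is well defined, i.e.\ that $\phi$ carries $[G_2,G_2]$ to itself (immediate, since it is characteristic), and that applying the bijection $\phi$ commutes with the set differences and with inversion used to express $Q^+$, $Q^-$, and $\ker\mu$ in terms of $P$ and $C$.
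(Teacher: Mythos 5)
Your proof is correct and takes essentially the same route as the paper's (very brief) argument, which likewise starts from $\phi(P)=P$, $\phi(C)=C$ and the identities $Q^+=p_2(P-C)$, $Q^-=p_2(P^{-1}-C)$, $\ker\mu=p_2(C)$; the paper simply leaves the bookkeeping with set differences and the intertwining relation $p_2\circ\phi=\phi_+\circ p_2$ implicit, while you spell it out.
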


\begin{proof}
	By assumption $\phi$ preserves $P$ and $C$.
	Therefore, by the definition of an induced map, $\phi_+$ preserves $p_2(P-C)$, $p_2(P^{-1}-C)$, and $p_2(C)$.
\end{proof}

In other words, for all $\Homv\in H_1(\hat{T};\ZZ)$, the sign of $\mu(\Homv)$ is the same as the sign of $\mu(\phi_+(\Homv))$.

\begin{lemma}\label{Lem:PreserveRatios}
	Let $\Homv,\Homw\in H_1(\hat{T};\ZZ)$ with $\mu(\Homv)\neq 0$ and $\mu(\Homw)\neq 0$.
	Suppose that $\phi\in\aut(G_2)$ preserves $C$ and the bi-ordering $<_2$.
	Let $\phi_+\in\aut(H_1(\hat{T};\ZZ))$ be the automorphism induced by $\phi$.
	Then
	\[
		\frac{\mu(\Homv)}{\mu(\Homw)}=\frac{\mu(\phi_+(\Homv))}{\mu(\phi_+(\Homw))}.
	\]
\end{lemma}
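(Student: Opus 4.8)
The plan is to exploit the sign-preservation property established in \reflem{PreserveSign} together with the density of $\QQ$ in $\RR$. Observe first that $\mu$ is a group homomorphism (being $f\circ\psi$, a composite of homomorphisms) and $\phi_+$ is a group automorphism, so $\mu\circ\phi_+$ is again a homomorphism $H_1(\hat{T};\ZZ)\to\RR$. I would begin with a normalization: if $\mu(\Homv)<0$, replace $\Homv$ by $-\Homv$; since $\mu$ and $\phi_+$ are homomorphisms this negates both $\mu(\Homv)$ and $\mu(\phi_+(\Homv))$, leaving the two ratios $\mu(\Homv)/\mu(\Homw)$ and $\mu(\phi_+(\Homv))/\mu(\phi_+(\Homw))$ unchanged. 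Doing the same for $\Homw$, we may assume $\mu(\Homv)>0$ and $\mu(\Homw)>0$; then \reflem{PreserveSign} forces $\mu(\phi_+(\Homv))>0$ and $\mu(\phi_+(\Homw))>0$ as well. Write $r=\mu(\Homv)/\mu(\Homw)$ and $r'=\mu(\phi_+(\Homv))/\mu(\phi_+(\Homw))$, both positive reals.

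Next I would argue by contradiction. Suppose $r\neq r'$, and choose positive integers $p,q$ with $p/q$ strictly between $r$ and $r'$. Since $q\,\mu(\Homv)-p\,\mu(\Homw)<0$ if and only if $r<p/q$, and $q\,\mu(\phi_+(\Homv))-p\,\mu(\phi_+(\Homw))<0$ if and only if $r'<p/q$, and exactly one of $r<p/q$, $r'<p/q$ holds, the two integers $\mu(q\Homv-p\Homw)=q\,\mu(\Homv)-p\,\mu(\Homw)$ and $\mu(\phi_+(q\Homv-p\Homw))=q\,\mu(\phi_+(\Homv))-p\,\mu(\phi_+(\Homw))$ are both nonzero and have opposite signs. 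This contradicts \reflem{PreserveSign}, which says precisely that for every element of $H_1(\hat{T};\ZZ)$ --- here the element $q\Homv-p\Homw$ --- applying $\mu$ and applying $\mu\circ\phi_+$ yield values of the same sign. Hence $r=r'$, which is exactly the identity claimed. The argument is symmetric in $r$ and $r'$, so no case split on which of them is larger is needed.

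I do not expect any real obstacle; the only points demanding a little care are the sign normalization and the trivial check that $q\Homv-p\Homw$ genuinely lies in $H_1(\hat{T};\ZZ)$ so that \reflem{PreserveSign} applies (it does, as $\Homv,\Homw\in H_1(\hat{T};\ZZ)$ and $p,q\in\ZZ$). \emph{Alternatively}, the statement can be packaged more structurally: $\mu$ and $\mu\circ\phi_+$ are injective homomorphisms $H_1(\hat{T};\ZZ)\to\RR$ with common kernel $p_2(C)$ (since $\phi_+$ preserves $\ker\mu$ by \reflem{PreserveSign}) that induce the same ordering on the quotient, so by the uniqueness-up-to-positive-scaling refinement of \refthm{Holder} they differ by a positive scalar $\lambda$, whence $\mu(\phi_+(\Homv))/\mu(\phi_+(\Homw))=\lambda\mu(\Homv)/\lambda\mu(\Homw)=\mu(\Homv)/\mu(\Homw)$; but the elementary rational-separation argument above avoids invoking anything beyond \reflem{PreserveSign}.
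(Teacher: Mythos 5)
Your proof is correct and rests on the same core idea as the paper's: apply \reflem{PreserveSign} to integer linear combinations $q\Homv-p\Homw$ and use the resulting sign constraints to pin down the ratio via rational numbers. The paper organizes this as a sandwich argument (showing $\mu(\phi_+(\Homv))/\mu(\phi_+(\Homw))$ is a lower bound for $\QQ_{>d}$ and an upper bound for $\QQ_{<d}$, with a case split on the sign of $\mu(\Homw)$), whereas you normalize signs up front and derive a contradiction from a single separating rational --- slightly cleaner, but the same mechanism. One cosmetic slip: you call $\mu(q\Homv-p\Homw)$ and $\mu(\phi_+(q\Homv-p\Homw))$ ``integers,'' but $\mu$ takes values in $\RR$; they are just real numbers, which is all the argument needs.
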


\begin{proof}
	Let $d=\mu(\Homv)/\mu(\Homw)$, and define the set $\QQ_{>d}$ as follows.
	\[
		\QQ_{>d}=\{r\in\QQ\mid r>d\}
	\]
	Thus, $d=\inf \QQ_{>d}$.
	Consider an arbitrary element of $\QQ_{>d}$. 
	Write this element as $a/b$ where $a,b$ are integers with $b>0$. 
	Thus $d < a/b$.
	We first assume that $\mu(\Homw) > 0$.
	Then we have that
	\[
		0=d\mu(\Homw)-\mu(\Homv) < (a/b)\mu(\Homw)-\mu(\Homv) =  a\mu(\Homw)-b\mu(\Homv)
	\]
	\[
		=\mu\big(a\Homw-b\Homv\big).
	\]
	
	By \reflem{PreserveSign},
	\[
		0<\mu\big(\phi_+(a\Homw)-b\Homv)\big)=a\mu\big(\phi_+(\Homw)\big)-b\mu\big(\phi_+(\Homv)\big)
	\]
Thus,
	\[
		\frac{\mu(\phi_+(\Homv))}{\mu(\phi_+(\Homw))}<\frac{a}{b}
	\]
	Since $a/b$ was arbitrary,  $\mu(\phi_+(\Homv))/\mu(\phi_+(\Homw))$ is a lower bound for $\QQ_{>d}$.
	Therefore,
	\[
		\frac{\mu(\phi_+(\Homv))}{\mu(\phi_+(\Homw))}\leq d
	\]
	Similarly, $\mu(\phi_+(\Homv))/\mu(\phi_+(\Homw))$ is an upper bound for
	\[
		 \QQ_{<d}=\{r\in\QQ\mid r<d\}
	\]
	so
	\[
		\frac{\mu(\phi_+(\Homv))}{\mu(\phi_+(\Homw))}=d=\frac{\mu(\Homv)}{\mu(\Homw)}.
	\]
	A similar argument goes through when $\mu(\Homw) < 0$.
\end{proof}

The map $\mu$ is determined by its value on the basis $\calB$.
For each pair of integers $m,n$, define $\gamma_{m,n}$ to be $q^{-1}(m,n)$, where $q$ is as given in \refdef{q}.
Thus,
\[
	\gamma_{m,n}=s_{m,n}(\gamma_{0,0})
\]

\begin{lemma} \label{Lem:NuExponential}
	For all $m,n\in\ZZ$,
	\[
		\mu(\gamma_{m,n})=\mu(\gamma_{0,0})a^mb^n
	\]
	for some $a,b\in\RR^+$.
\end{lemma}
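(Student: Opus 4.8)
The plan is to exploit the ``horizontal'' and ``vertical'' conjugations of $G_2$ coming from $\alpha$ and $\beta$: these realize the shift maps $s_{(1,0)}$ and $s_{(0,1)}$ on $H_1(\hat T;\ZZ)$, and we play them against Lemmas \ref{Lem:PreserveSign} and \ref{Lem:PreserveRatios}. Since $G_2=[G,G]$ is normal in $G$, conjugation by $\alpha$ and by $\beta$ restricts to automorphisms $c_\alpha,c_\beta\in\aut(G_2)$; being restrictions of inner automorphisms of $\pi_1(M)$, they preserve $<$ and hence the restriction $<_2$ on $G_2$. An order-preserving automorphism carries convex subgroups to convex subgroups, and by \refprop{NestedConvex} the maximal proper convex subgroup $C$ of $G_2$ is unique, so $c_\alpha(C)=c_\beta(C)=C$. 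Because $p_1(\alpha)=[\alpha]$ and $p_1(\beta)=[\beta]$, \reflem{Shift} identifies the induced maps on $H_1(\hat T;\ZZ)$: $(c_\alpha)_+$ is the linear extension of the permutation $s_{(1,0)}$ of $\calB$, and $(c_\beta)_+$ that of $s_{(0,1)}$. In particular $s_{(1,0)}$ and $s_{(0,1)}$ satisfy the hypotheses of Lemmas \ref{Lem:PreserveSign} and \ref{Lem:PreserveRatios}.

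Before computing I would check that $\mu(\gamma_{m,n})\neq 0$ for all $m,n\in\ZZ$. Since $\psi$ is surjective and $f$ injective, $\mu=f\circ\psi$ is not identically zero; and by \reflem{PreserveSign}, $\ker\mu$ is invariant under $s_{(1,0)}$ and $s_{(0,1)}$, hence under every shift $s_{(m,n)}$. So if any $\gamma_{m,n}=s_{(m,n)}(\gamma_{0,0})$ lay in $\ker\mu$, then $\gamma_{0,0}$ would, and hence so would every element of $\calB$, forcing $\mu\equiv 0$, a contradiction.

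For the exponential formula, apply \reflem{PreserveRatios} with $\phi=c_\alpha$, $\Homv=\gamma_{m+1,n}$, $\Homw=\gamma_{m,n}$: this gives $\mu(\gamma_{m+1,n})/\mu(\gamma_{m,n})=\mu(\gamma_{m+2,n})/\mu(\gamma_{m+1,n})$, so this ratio is independent of $m$. Applying \reflem{PreserveRatios} with $\phi=c_\beta$ and the same $\Homv,\Homw$ shows it is also independent of $n$; call the common value $a$. Symmetrically the vertical ratio $\mu(\gamma_{m,n+1})/\mu(\gamma_{m,n})$ equals a constant $b$. Since $\gamma_{m+1,n}=s_{(1,0)}(\gamma_{m,n})$ and $Q^+,Q^-$ are $s_{(1,0)}$-invariant by \reflem{PreserveSign}, $\mu(\gamma_{m+1,n})$ and $\mu(\gamma_{m,n})$ have the same sign, so $a>0$; likewise $b>0$. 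Telescoping then yields $\mu(\gamma_{m,n})=a^m\mu(\gamma_{0,n})=a^mb^n\mu(\gamma_{0,0})$, as required.

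I expect the main obstacle to be the bookkeeping in the first step — confirming that $c_\alpha$ and $c_\beta$ fix the convex subgroup $C$, which, since $G_2$ is not finitely generated, we cannot read off directly from \refprop{FiniteGeneratorMaxConvex} but must derive from uniqueness of the maximal convex subgroup, and then correctly matching the conjugation action described by \reflem{Shift} with the shifts $s_{(1,0)}$ and $s_{(0,1)}$.
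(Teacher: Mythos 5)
Your proof is correct and takes essentially the same approach as the paper: realize the $\gamma_{m,n}$ as $p_2$ of conjugates of $[\alpha,\beta]$, use \reflem{PreserveSign} to get constant (nonzero) sign of $\mu$ on $\calB$, and use \reflem{PreserveRatios} with the conjugation automorphisms to get constant ratios $a$ and $b$, then telescope. You are in fact a bit more careful than the paper on the hypothesis of Lemmas \ref{Lem:PreserveSign} and \ref{Lem:PreserveRatios} — verifying via \refprop{NestedConvex} that the conjugations $c_\alpha, c_\beta$ fix the unique maximal convex subgroup $C$ — a step the paper leaves implicit when it invokes those lemmas.
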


\begin{proof}
	For each pair of integers $m,n$,
	\[
		\gamma_{m,n}=p_2(\alpha^m\beta^n[\alpha,\beta]\beta^{-n}\alpha^{-m}]).
	\]
	Since conjugation preserves every positive cone of the free group $G$,
	\reflem{PreserveSign} implies that either $\mu(\calB)\subset\RR^+$, $\mu(\calB)\subset\RR^-$, or $\mu(\calB)=\{0\}$.
	Since $\mu$ is not a trivial map, $\mu(\gamma_{m,n})\neq 0$ for all integer pairs $m$ and $n$.
	
	Let $a$ and $b$ be defined as follows.
	\[
		a=\frac{\mu(\gamma_{1,0})}{\mu(\gamma_{0,0})}\quad \mbox{and}\quad b=\frac{\mu(\gamma_{0,1})}{\mu(\gamma_{0,0})}
	\]
	By \reflem{PreserveRatios},
	\[
		a=\frac{\mu(\gamma_{m+1,n})}{\mu(\gamma_{m,n})}\quad \mbox{and}\quad b=\frac{\mu(\gamma_{m,n+1})}{\mu(\gamma_{m,n})}
	\]
	for all $m$ and $n$.
	It follows that $\mu(\gamma_{m,n})=\mu(\gamma_{0,0})a^mb^n$.	
\end{proof}

The following lemma is proved using \emph{basic commutators}, see \cite[Chapter~3]{ClementMajewicsZyman17} for the details.

\begin{lemma}
\label{Lem:G2G3Z}
$G_2/G_3\cong\ZZ$.
\end{lemma}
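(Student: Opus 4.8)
The plan is to invoke the theory of basic commutators on the free generating set $\{\alpha,\beta\}$ of $G$, exactly the tool the paper points to in \cite[Chapter~3]{ClementMajewicsZyman17}. Fix a total order on basic commutators that refines the order by weight, with $\alpha<\beta$ among the two weight-one commutators. The key input is the P.~Hall--Magnus--Witt theorem: for each $n$, the images in $A_n=G_n/G_{n+1}$ of the basic commutators of weight $n$ form a basis of the free abelian group $A_n$. So the whole statement reduces to counting basic commutators of weight two.

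First I would enumerate the basic commutators of low weight. In weight one there are precisely the two generators $\alpha$ and $\beta$. A basic commutator of weight two has the form $[c,c']$ with $c,c'$ basic commutators of weight one and $c>c'$, so the unique possibility is $[\beta,\alpha]$. (Consistently with this, the number of basic commutators of weight $n$ on two generators is the necklace/Witt count $\tfrac1n\sum_{d\mid n}\mu(d)\,2^{n/d}$, which equals $2$ for $n=1$ and $1$ for $n=2$.) Applying the basis theorem to $n=2$ gives that $G_2/G_3=A_2$ is free abelian of rank one, hence isomorphic to $\ZZ$, generated by the class of $[\alpha,\beta]=\alpha\beta\alpha^{-1}\beta^{-1}$.

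There is no genuine obstacle here; the content is entirely in quoting the basic commutator basis theorem. The only thing requiring a moment's care is bookkeeping: different references order the two arguments of a commutator differently and index weights slightly differently, but under any of the standard conventions there is exactly one basic commutator of weight two on two generators, so the conclusion $G_2/G_3\cong\ZZ$ is unaffected. (If one wished to avoid the combinatorial machinery, an alternative is the Magnus embedding $G\hookrightarrow\ZZ\langle\langle X,Y\rangle\rangle$, $\alpha\mapsto 1+X$, $\beta\mapsto 1+Y$, under which $G_n=\{g:g-1\in I^n\}$ for $I=(X,Y)$ the augmentation ideal; the induced injection $G_2/G_3\hookrightarrow I^2/I^3$ sends $[\alpha,\beta]$ to $XY-YX$ and has image the rank-one lattice $\ZZ\,(XY-YX)$, again giving $G_2/G_3\cong\ZZ$.)
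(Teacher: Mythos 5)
Your proof is correct and takes essentially the same route as the paper: both reduce the claim to counting basic commutators of weight two on two generators (there is exactly one, $[\beta,\alpha]$) and cite the basic-commutator basis theorem for the lower central quotients of a free group. If anything, your explicit appeal to the Hall--Magnus--Witt statement that these classes form a \emph{basis} of $A_2$ is slightly more careful than the paper's ``generated\dots the result follows,'' since one needs freeness (not just cyclicity) to conclude $A_2\cong\ZZ$ rather than a finite cyclic quotient.
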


\begin{proof}
By Theorem 3.1 of Clement-Majewics-Zyman \cite{ClementMajewicsZyman17}, any element of $G_2/G_3$ is generated modulo $G_3$ by the basic commutators of weight two.
In our case there is only one commutator of weight two, namely $[\beta, \alpha]$.
The result follows.
\end{proof}

\begin{lemma} \label{Lem:ConstantG3}
	If $\mu$ is constant on $\calB$,
	then $C=G_3$.
\end{lemma}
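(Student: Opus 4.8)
The plan is to compute the homomorphism $\mu\circ p_2\from G_2\to\RR$ explicitly and recognise its kernel as $G_3$. Let $c$ be the common value of $\mu$ on $\calB$; since $C$ is a proper convex subgroup of $G_2$, the map $\mu$ is nontrivial, and hence $c\neq 0$. Because $\calB$ is a $\ZZ$-basis of $H_1(\hat T;\ZZ)$, for every $v=\sum_{\gamma\in\calB}k_\gamma\gamma$ we get $\mu(v)=c\sum_\gamma k_\gamma$; in other words $\mu=c\cdot\epsilon$, where $\epsilon\from H_1(\hat T;\ZZ)\to\ZZ$ is the augmentation homomorphism sending each basis element to $1$. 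From the stated relation $\ker\mu=p_2(C)$, together with the fact that $G_2/C$ is abelian (so that $\ker p_2=[G_2,G_2]\subseteq C$), we obtain $\ker(\mu\circ p_2)=p_2^{-1}(p_2(C))=C$, and since $c\neq 0$ this equals $\ker(\epsilon\circ p_2)$.

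The next step is to show $G_3\subseteq C$ by verifying that $\epsilon\circ p_2$ kills each generator $[g,x]$ of $G_3=[G_2,G]$, with $g\in G_2$ and $x\in G$. Since $G_2/[G_2,G_2]$ is abelian, $p_2([g,x])=p_2(g)-p_2(xgx^{-1})$; writing $p_1(x)=m[\alpha]+n[\beta]$ and $p_2(g)=\sum_\gamma k_\gamma\gamma$, \reflem{Shift} gives $p_2(xgx^{-1})=\sum_\gamma k_\gamma\,s_{(m,n)}(\gamma)$, so that $p_2([g,x])=\sum_\gamma k_\gamma\bigl(\gamma-s_{(m,n)}(\gamma)\bigr)$. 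Applying $\epsilon$ and using that $s_{(m,n)}$ permutes $\calB$, every summand contributes $k_\gamma(1-1)=0$, hence $(\epsilon\circ p_2)([g,x])=0$. As $\epsilon\circ p_2$ is a homomorphism into an abelian group, it vanishes on all of $G_3$, i.e.\ $G_3\subseteq C$.

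For the reverse inclusion I would invoke \reflem{G2G3Z}. Because $G_3\subseteq C=\ker(\epsilon\circ p_2)$, the map $\epsilon\circ p_2$ descends to a homomorphism $G_2/G_3\to\RR$, and $G_2/G_3\cong\ZZ$. This descended map is nonzero, since $\mu$ (hence $\mu\circ p_2=c\,(\epsilon\circ p_2)$) is nonzero while $p_2$ is surjective; but every nonzero homomorphism $\ZZ\to\RR$ is injective, so $\ker(\epsilon\circ p_2)/G_3$ is trivial. Therefore $C=\ker(\epsilon\circ p_2)=G_3$. The commutator computation in the middle paragraph is routine once \reflem{Shift} is in hand, and the last paragraph is purely formal given \reflem{G2G3Z}; the only genuine idea is the first observation that a $\mu$ constant on $\calB$ is, up to scaling, the augmentation map, so I anticipate no real obstacle — the one point requiring care is tracing the defining diagram (and using that $\ker p_2\subseteq C$) to see that $C$ really is the kernel of $\mu\circ p_2$.
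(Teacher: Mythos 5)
Your proof is correct and follows essentially the same route as the paper's: you identify $C=\ker(\mu\circ p_2)$, use \reflem{Shift} to show $\mu\circ p_2$ kills the generators $[g,x]$ of $G_3$ (hence $G_3\subseteq C$), and then combine $G_2/G_3\cong\ZZ$ from \reflem{G2G3Z} with the nontriviality of the map to $\RR$ to force $C=G_3$. The only cosmetic difference is that you package ``$\mu$ constant on $\calB$'' as $\mu=c\cdot\epsilon$ with $\epsilon$ the augmentation map, and you finish with ``nonzero $\ZZ\to\RR$ is injective'' rather than the paper's ``$\ZZ$ is the only nontrivial bi-orderable cyclic group''; both are instances of the same observation.
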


\begin{proof}
	Suppose that $\mu$ is constant on $\calB$.
	First we show that $G_3\subset C$.
	By the first isomorphism theorem, $C=\ker(\phi \circ p_2) = \ker \mu\circ p_2$.
	It is sufficient to show that for arbitrary elements $g\in G_2$ and $x\in G$, the element $gxg^{-1}x^{-1} \in G_3$ is in $\ker \mu\circ p_2$.
	To show this, suppose that the homology class $p_1(x)$ in $H_1(T,\ZZ)$ is  $m[\alpha]+n[\beta]$
	and that the homology class $p_2(g)$ in $H_1(\hat{T},\ZZ)$ is
	\[
		\sum_{\gamma\in\calB} k_\gamma \gamma.  
	\]
	By \reflem{Shift},
	\[
		p_2(g x g^{-1} x^{-1})=
		\sum_{\gamma\in\calB} k_\gamma \gamma - \sum_{\gamma\in\calB} k_\gamma s_{(m,n)}(\gamma)
		=\sum_{\gamma\in\calB} k_\gamma (\gamma - s_{(m,n)}(\gamma)).
	\]
	Since $\mu$ is constant on $\calB$, we have that
	\[
		\mu(p_2(g x g^{-1} x^{-1}))
		=\sum_{\gamma\in\calB} k_\gamma \big[\mu(\gamma) - \mu(s_{(m,n)}(\gamma))\big]=0 .
	\]
	Thus, $G_3\subset C$.
	By the third isomorphism theorem,
	\[
		\frac{G_2/G_3}{C/G_3}\cong \frac{G_2}{C}.
	\]
	By \reflem{G2G3Z}, we have that $G_2/C$ is a non-trivial bi-orderable quotient of $\ZZ$.
	Since $\ZZ$ is the only bi-orderable cyclic group, $C/G_3$ must be trivial.
	Therefore, $C=G_3$.
\end{proof}

Recall that from \refdef{hHat}, we lift the monodromy $h$ to a map $\hat{h}$ on the cover $\hat{T}$. Then $\hat{h}_+$ is the induced map on homology.

\begin{lemma} \label{Lem:NuOrbit}
	For all $\Homv\in H_1(\hat{T};\ZZ)$,
	\[
		\mu(\hat{h}_+(\Homv))=\mu(\Homv) .
	\]
\end{lemma}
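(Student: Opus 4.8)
The plan is to derive the identity from \reflem{PreserveRatios} and \reflem{PreserveSign}, applied to the automorphism of $G_2$ coming from the monodromy. First I would observe that $h$ restricts to an automorphism $h_2$ of $G_2$ (it is characteristic in $G$), that $h_2$ preserves the induced bi-order $<_2$ — because $<_G$ is $h$-invariant by \refprop{BOInvariantOrder} — and hence carries convex subgroups of $(G_2,<_2)$ to convex subgroups. Since the convex subgroups of $(G_2,<_2)$ are totally ordered by inclusion (\refprop{NestedConvex}), the maximal one $C$ is unique, so $h_2(C)=C$. Finally $h_2$ induces $\hat{h}_+$ on $H_1(\hat{T};\ZZ)\cong G_2/[G_2,G_2]$. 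With these facts in hand, \reflem{PreserveSign} tells us $\ker\mu$ is $\hat{h}_+$-invariant, and \reflem{PreserveRatios} (with $\phi=h_2$, $\phi_+=\hat{h}_+$) tells us that $\mu(\hat{h}_+(\Homv))/\mu(\hat{h}_+(\Homw))=\mu(\Homv)/\mu(\Homw)$ whenever both denominators are nonzero.

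The key step is then to normalize this ratio by exhibiting a vector on which $\mu$ is nonzero and which $\hat{h}_+$ fixes. The distinguished basis element $\gamma_{0,0}=q^{-1}(0,0)$ is exactly the boundary component $\gamma_0$, which $\hat{h}$ preserves by \refdef{hHat}; equivalently, by \reflem{ActionOfHatH+}\refitm{Commutes} and linearity of $h_+$ we have $q(\hat{h}_+(\gamma_{0,0}))=h_+(q(\gamma_{0,0}))=h_+(0)=0$, so $\hat{h}_+(\gamma_{0,0})=\gamma_{0,0}$. Moreover $\mu(\gamma_{0,0})\neq0$ (the proof of \reflem{NuExponential} shows $\mu$ is nonzero on all of $\calB$). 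Now, for any $\Homv$ with $\mu(\Homv)\neq0$, taking $\Homw=\gamma_{0,0}$ in \reflem{PreserveRatios} gives $\mu(\hat{h}_+(\Homv))/\mu(\gamma_{0,0})=\mu(\Homv)/\mu(\gamma_{0,0})$, hence $\mu(\hat{h}_+(\Homv))=\mu(\Homv)$; and for $\Homv\in\ker\mu$ we get $\mu(\hat{h}_+(\Homv))=0=\mu(\Homv)$ from the invariance of $\ker\mu$ under $\hat{h}_+$. These two cases exhaust $H_1(\hat{T};\ZZ)$, proving the lemma.

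I do not expect any real difficulty once the pieces are assembled — no computation is needed. The one point demanding care is the bookkeeping in the first paragraph: confirming that the automorphism of $G_2/[G_2,G_2]$ induced by $h_2$ really is $\hat{h}_+$, so that \reflem{ActionOfHatH+}, \reflem{PreserveSign}, and \reflem{PreserveRatios} all refer to the same map, together with the observation that $h_2$ fixes $C$ (which rests on uniqueness of the maximal convex subgroup). After that the fixed vector $\gamma_{0,0}$ does all the work.
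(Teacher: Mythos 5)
Your proof is correct and follows essentially the same route as the paper: apply \reflem{PreserveSign} to handle $\ker\mu$, and apply \reflem{PreserveRatios} with the $\hat{h}_+$-fixed basis element $\gamma_{0,0}$ to normalize the ratio and conclude equality on the complement. Your first paragraph spells out the verification that $h_*|_{G_2}$ preserves $C$ (via uniqueness of the maximal convex subgroup from \refprop{NestedConvex}) and that the induced map on $H_1(\hat{T};\ZZ)$ is $\hat{h}_+$ — details the paper leaves implicit when it writes ``by assumption, $h_*$ preserves $P$'' — which is careful bookkeeping but not a departure in method.
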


\begin{proof}
	Let $\Homv\in H_1(\hat{T};\ZZ)$.
	The map $\hat{h}_+$ is induced by $h_*$, and
	by assumption, $h_*$ preserves $P$.
	By \reflem{PreserveSign}, $\mu(\Homv)=0$ if and only if $\mu(\hat{h}_+(\Homv))=0$.
	
	When $\mu(\Homv)\neq 0$,
	\begin{equation} \label{Eq:RationhPlus}
		\frac{\mu(\Homv)}{\mu(\gamma_{0,0})}=\frac{\mu(\hat{h}_+(\Homv))}{\mu(\hat{h}_+(\gamma_{0,0}))}
	\end{equation}
	by \reflem{PreserveRatios}.
	Since $\gamma_{0,0}$ is fixed by $\hat{h}_+$, we have $\mu(\gamma_{0,0})=\mu(\hat{h}_+(\gamma_{0,0}))$.
	Therefore, it follows from (\ref{Eq:RationhPlus}) that $\mu(\hat{h}_+(\Homv))=\mu(\Homv)$.
\end{proof}

\begin{lemma} \label{Lem:NuConstant}
	Suppose for $i=1,2,3$ that $\gamma_{m_i,n_i}\in\calB$ with
	\[
		\mu(\gamma_{m_1,n_1})=\mu(\gamma_{m_2,n_2})=\mu(\gamma_{m_3,n_3}) .
	\]
	If the points $\{ (m_i, n_i) \mid i = 1,2,3 \}$ are not collinear in $\ZZ^2$ then $\mu$ is constant on $\calB$.
\end{lemma}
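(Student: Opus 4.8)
The plan is to turn the multiplicative information supplied by \reflem{NuExponential} into a homogeneous linear system and then let the non-collinearity hypothesis force that system to be non-degenerate. First I would apply \reflem{NuExponential} to write $\mu(\gamma_{m,n})=\mu(\gamma_{0,0})\,a^m b^n$ for fixed $a,b\in\RR^+$, recalling from the proof of that lemma that $\mu$ is not trivial on $\calB$, so $\mu(\gamma_{0,0})\neq 0$ and $\mu$ never vanishes on $\calB$. Then the hypotheses $\mu(\gamma_{m_1,n_1})=\mu(\gamma_{m_2,n_2})$ and $\mu(\gamma_{m_1,n_1})=\mu(\gamma_{m_3,n_3})$ become
\[
	a^{m_1-m_2}b^{n_1-n_2}=1 \qquad\text{and}\qquad a^{m_1-m_3}b^{n_1-n_3}=1 .
\]

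Next I would take logarithms. Writing $s=\log a$ and $t=\log b$, the two relations read
\[
	(m_1-m_2)s+(n_1-n_2)t=0, \qquad (m_1-m_3)s+(n_1-n_3)t=0 ,
\]
a homogeneous $2\times 2$ system in $(s,t)$ whose coefficient determinant is $(m_1-m_2)(n_1-n_3)-(n_1-n_2)(m_1-m_3)$, which up to sign is twice the area of the triangle with vertices $(m_1,n_1),(m_2,n_2),(m_3,n_3)$.

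The crux is simply the observation that non-collinearity of the three lattice points is precisely the statement that this determinant is non-zero; given that, the system admits only the trivial solution $s=t=0$, so $a=b=1$, hence $\mu(\gamma_{m,n})=\mu(\gamma_{0,0})$ for all $m,n$, i.e.\ $\mu$ is constant on $\calB$. There is no real obstacle here: the only point requiring a moment's care is the legitimacy of taking logarithms, which is guaranteed by $a,b>0$ together with the non-vanishing of $\mu$ on $\calB$ from \reflem{NuExponential}.
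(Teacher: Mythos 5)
Your proof is correct and takes essentially the same approach as the paper: both apply \reflem{NuExponential} to get $\mu(\gamma_{m,n})=\mu(\gamma_{0,0})a^mb^n$, take logarithms to linearize, and use the fact that non-collinearity of the three lattice points makes the resulting linear system non-degenerate. (A minor advantage of your phrasing is that by dividing first and taking logs only of ratios $a^{m_i-m_j}b^{n_i-n_j}=1$, you sidestep the fact that $\mu(\gamma_{0,0})$ could be negative, whereas the paper writes $\log\circ\mu$, which implicitly requires choosing a sign convention when $\mu(\calB)\subset\RR^-$.)
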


\begin{proof}
	By \reflem{NuExponential}, there are numbers $a,b\in\RR^+$ such that
	\[
		\mu(\gamma_{m,n})=\mu(\gamma_{0,0})a^mb^n
	\] 
	for all integer pairs $m$ and $n$.
	Then 
	\[
		\log\circ\mu(\gamma_{m,n})=\log \circ \mu(\gamma_{0,0}) + m \log(a) + n \log(b)
	\] 
	Thus $\log\circ\mu$ is a linear function of $m$ and $n$. Therefore if it is constant at three non-collinear points then it is constant.
\end{proof}

\begin{proof}[Proof of \refthm{G3Convex}]
	Suppose that $M$ hyperbolic so $h$ is a pseudo-Anosov map.
	Consider the action $\hat{h}_+$ on $\calB$.
	The induced map $h_+$ doesn't preserve any proper non-trivial subgroups of $H_1(T;\ZZ)=\ZZ^2$.
	Thus, none of the orbits of $\calB$ under $\hat{h}_+$ contain all collinear points unless the orbit is a single fixed point.

	Consider $\gamma_{m,n}\in\calB-\{\gamma_{0,0}\}$.
	Since $\gamma_{m,n}$ is not a fixed point of $\hat{h}_+$, the orbit of $\gamma_{m,n}$ contains three distinct elements $\gamma$, $\gamma'$, and $\gamma''$ which are not collinear.
	By \reflem{NuOrbit},
	\[
		\mu(\gamma)=\mu(\gamma')=\mu(\gamma'') .
	\]
	Therefore, by \reflem{NuConstant} the function $\mu$ is constant on $\calB$,
	and by \reflem{ConstantG3} we have that $C=G_3$.
\end{proof}

\section{A non-standard bi-order}
\label{Sec:nonstandard}

To prove \refthm{nonstandard}, we construct non-standard bi-orders on untwisted hyperbolic punctured torus bundles following \refcon{BiorderConstructionSchemeFin}.

\subsection{The first quotient}
\label{Sec:FirstQuotient}

The map $h \from T \to T$ induces a map $h_+\in\SL(2,\ZZ)$ on $H_1(T; \ZZ) \cong \ZZ^2$  with eigenvalues $(\lambda, 1/\lambda)$ and corresponding eigenbasis $(e_\lambda, e_{1/\lambda})$.
The bundle is untwisted, so the eigenvalues are positive. 
Using this eigenbasis, we can write $h_+$ as the matrix 
\[
    \left(\begin{array}{cc}
        \lambda & 0 \\
        0 & 1/\lambda
    \end{array}\right) .
\]
It follows that $h_+$ preserves four positive cones $Q_1$ of $\ZZ^2$.
Each of these is given by taking one of $\{e_\lambda, e_{1/\lambda}, -e_\lambda, -e_{1/\lambda}\}$ for the vector $v$ in \refexa{OrderZxZ}\refitm{IrrationalSlope}.
See \reffig{PositiveConesForZxZ}.

\begin{figure}[htbp]
\subfloat[$e_\lambda$]
{
\includegraphics[width = 0.4\textwidth]{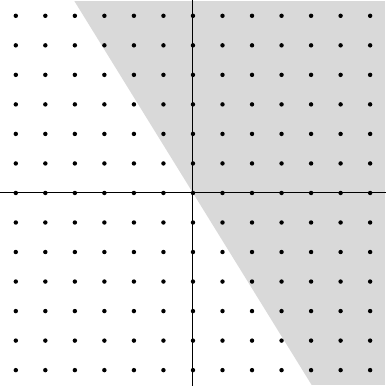}
}
\quad
\subfloat[$-e_\lambda$]{
\includegraphics[width = 0.4\textwidth]{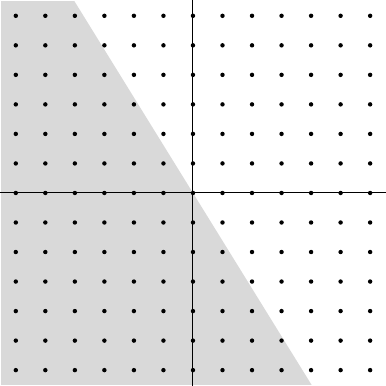}
}

\subfloat[$e_{1/\lambda}$]
{
\includegraphics[width = 0.4\textwidth]{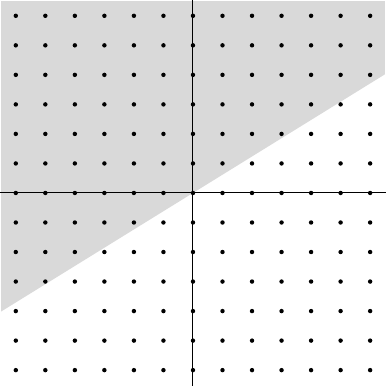}
}
\quad
\subfloat[$-e_{1/\lambda}$]{
\includegraphics[width = 0.4\textwidth]{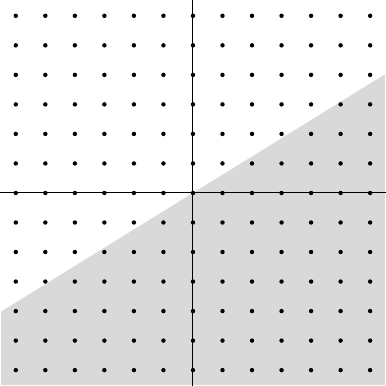}
}
\caption{The four positive cones (shaded) in the example of the figure eight knot complement.}
\label{Fig:PositiveConesForZxZ}
\end{figure}

\subsection{The second quotient}
\label{Sec:SecondQuotient}

Recall the definition of the map $\hat{h}$ and basis of $\calB$ from Definitions~\ref{Def:hHat} and~\ref{Def:Basis}.
The goal is to define a positive cone of $H_1(\hat{T};\ZZ)$ invariant under  $\hat{h}_+$.
In short, the plan is to find an ordering on  $\calB$ that is invariant under $\hat{h}_+$,
and then define a bi-ordering of $H_1(\hat{T};\ZZ)$ in a lexicographic fashion relative to the ordering of $\calB$ as in \refexa{InfiniteLex}.

\begin{definition}
Let $e$ be one of $\{e_\lambda, e_{1/\lambda}, -e_\lambda, -e_{1/\lambda}\}$.
Again using \refexa{OrderZxZ}\refitm{IrrationalSlope}, we generate an order $\prec_e$ on $\ZZ^2$ (possibly different from the choice made in \refsec{FirstQuotient}). 
This order induces an ordering $<_e$ on $\mathcal{B}$ via $q$.
That is, 
$\gamma_1 <_e \gamma_2$ if and only if $q(\gamma_1) \prec_e q(\gamma_2)$.
\end{definition}

\begin{lemma}
\label{Lem:ShiftInvariant}
The order $<_e$ on $\calB$ is invariant under shifting by any vector $(m,n) \in \ZZ^2$. 
\end{lemma}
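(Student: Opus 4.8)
The plan is to unwind the definitions and reduce the statement to the translation-invariance of the bi-order $\prec_e$ on $\ZZ^2$. Recall that $\prec_e$ is the bi-ordering of the abelian group $\ZZ^2$ whose positive cone is $\{u \in \ZZ^2 \mid u \cdot e > 0\}$, as in \refexa{OrderZxZ}\refitm{IrrationalSlope}; equivalently, $u \prec_e v$ if and only if $(v-u)\cdot e > 0$. By definition, the order $<_e$ on $\calB$ is the pullback of $\prec_e$ along the bijection $q$ of \refdef{q}, and the shift $s_{(m,n)}$ of \refdef{Shift} satisfies $q \circ s_{(m,n)} = T_{(m,n)} \circ q$, where $T_{(m,n)}$ denotes translation by $(m,n)$ on $\ZZ^2$.

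First I would observe that for $\gamma_1, \gamma_2 \in \calB$ we have $s_{(m,n)}(\gamma_1) <_e s_{(m,n)}(\gamma_2)$ if and only if $q(s_{(m,n)}(\gamma_1)) \prec_e q(s_{(m,n)}(\gamma_2))$, i.e.\ $(m,n)+q(\gamma_1) \prec_e (m,n)+q(\gamma_2)$. So it suffices to show $\prec_e$ is invariant under translation by $(m,n)$. This is immediate from the description above: for any $u,v,w \in \ZZ^2$, we have $((v+w)-(u+w))\cdot e = (v-u)\cdot e$, so $u \prec_e v \iff u+w \prec_e v+w$. Taking $w=(m,n)$, $u=q(\gamma_1)$, $v=q(\gamma_2)$ gives $\gamma_1 <_e \gamma_2 \iff s_{(m,n)}(\gamma_1) <_e s_{(m,n)}(\gamma_2)$, which is the claim. (Alternatively, one can simply invoke the fact that a bi-order on an abelian group is invariant under the group operation, which is exactly translation invariance here.)

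There is no real obstacle: the content is entirely bookkeeping with the definitions of $q$, $s_{(m,n)}$, and $\prec_e$. The one point worth stating carefully is the identity $q \circ s_{(m,n)} = T_{(m,n)} \circ q$, but this is nothing more than \refdef{Shift} rewritten.
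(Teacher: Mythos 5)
Your proposal is correct and follows the same route as the paper's proof: observe that the bi-order $\prec_e$ on $\ZZ^2$ is translation-invariant, and transfer this to $<_e$ on $\calB$ via the defining identity $q \circ s_{(m,n)} = T_{(m,n)} \circ q$. The paper states this in two sentences; you have simply unpacked the same bookkeeping in full.
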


\begin{proof}
Addition preserves the order $\prec_e$ on $\ZZ^2$.
The result then follows from \refdef{Shift}.
\end{proof}

\begin{lemma}	
\label{Lem:BOrder}
The order $<_e$ on $\calB$ is invariant under $\hat{h}_+$. 
\end{lemma}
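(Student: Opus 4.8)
The plan is to transport the ordering through the bijection $q\colon\calB\to\ZZ^2$ of \refdef{q} and thereby reduce the claim to a statement about $h_+$ acting on $\ZZ^2$. First I would invoke \reflem{ActionOfHatH+}: it gives $\hat h_+(\calB)\subseteq\calB$ together with $q\circ\hat h_+=h_+\circ q$, and since $\hat h$ is a self-homeomorphism of $\hat T$ it carries boundary components bijectively to boundary components, so $\hat h_+$ actually permutes $\calB$. Hence, via $q$, the action of $\hat h_+$ on $\calB$ is conjugate to the action of $h_+$ on $\ZZ^2$. Because $<_e$ is by construction the pullback along $q$ of the order $\prec_e$ (that is, $\gamma_1<_e\gamma_2$ iff $q(\gamma_1)\prec_e q(\gamma_2)$), the assertion ``$<_e$ is $\hat h_+$-invariant'' is equivalent to ``$\prec_e$ is $h_+$-invariant''.

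It then remains to observe that $h_+$ preserves $\prec_e$, and this is already contained in \refsec{FirstQuotient}: for any choice of $e\in\{e_\lambda,e_{1/\lambda},-e_\lambda,-e_{1/\lambda}\}$ the positive cone of $\prec_e$ is one of the four positive cones of $\ZZ^2$ arising from \refexa{OrderZxZ}\refitm{IrrationalSlope}, and $h_+$ was noted there to preserve each of the four. Concretely, in the eigenbasis $h_+$ scales the $e_\lambda$-direction by $\lambda$ and the $e_{1/\lambda}$-direction by $1/\lambda$; since $M$ is untwisted both scalars are positive, so $h_+$ fixes each eigenline of $H_1(T;\RR)$ and preserves its orientation, hence sends each of the four half-planes cut out by the two eigenlines to itself. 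Thus $h_+$ sends the positive cone of $\prec_e$ to itself, i.e.\ $\prec_e$ is $h_+$-invariant. Chaining the equivalences of the previous paragraph then finishes the argument: given $\gamma_1<_e\gamma_2$ one applies $q$, then $h_+$-invariance of $\prec_e$, then rewrites via $q\circ\hat h_+=h_+\circ q$ to conclude $\hat h_+\gamma_1<_e\hat h_+\gamma_2$, and since $\hat h_+$ is a bijection of $\calB$ this is exactly the invariance statement.

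I do not expect a genuine obstacle here; the whole content is \reflem{ActionOfHatH+} together with the bookkeeping already done in \refsec{FirstQuotient}. The one point to handle with a little care is the match-up between the ``vector $e$'' cones of \refexa{OrderZxZ}\refitm{IrrationalSlope} and the half-planes bounded by the eigenlines of $h_+$, and the role played by positivity of the eigenvalues: without positivity $h_+$ would preserve only the union of such a cone with its negative rather than the cone itself, so the hypothesis that $M$ is untwisted is exactly what is used at this step.
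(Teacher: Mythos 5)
Your proof is correct and matches the paper's own argument, which is precisely the short chain of deductions you describe at the end: pull the order back through $q$ using \reflem{ActionOfHatH+}\refitm{Commutes} and invoke the $h_+$-invariance of $\prec_e$ already recorded in \refsec{FirstQuotient}. The extra geometric justification you supply for why $h_+$ preserves $\prec_e$ is not part of the paper's proof of this lemma (it simply cites the earlier discussion), but it is sound, and your instinct to flag the match-up between the ``vector $e$'' cones and the eigenline half-planes as the one delicate point is a good one.
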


\begin{proof}
Let $\gamma_1$ and $\gamma_2$ be elements of $\calB$.
Suppose that $\gamma_1 <_e \gamma_2$.
Therefore $q(\gamma_1) \prec_e q(\gamma_2)$.
Since $\prec_e$ is invariant under the action of $h_+$ we have that 
\[
h_+\circ q(\gamma_1) \prec_e h_+\circ q(\gamma_2).
\]
By \reflem{ActionOfHatH+}\refitm{Commutes}, we have 
\[
q \circ \hat{h}_+(\gamma_1) \prec_e q \circ \hat{h}_+(\gamma_2),
\]
and so
\[
\hat{h}_+(\gamma_1) <_e \hat{h}_+(\gamma_2). \qedhere
\]
\end{proof}

We next define a positive cone $Q_2$ on $H_1(\hat{T};\ZZ)$ as follows.
\begin{definition}
\label{Def:Q2}
Given a non-zero element $\Homv\in H_1(\hat{T};\ZZ)$, we define $\max(\Homv)$ to be the maximum (under $<_e$) basis element $\gamma\in\mathcal{B}$ such that the coefficient of $\gamma$ in $\Homv$ is nonzero.
Let $k(\Homv)$ be this coefficient. 
We then define
\[
Q_2 = \{ \Homv \in H_1(\hat{T};\ZZ) \mid k(\Homv) > 0\}.  \qedhere
\]
\end{definition}

\begin{lemma}
	\label{Lem:OrderInvariant}
	The set $Q_2$ is a positive cone. 
	Moreover $Q_2$ is invariant under the action of $\hat{h}_+$.
\end{lemma}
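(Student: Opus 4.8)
The plan is to verify the three positive-cone axioms of \refdef{PositiveCone} directly from the lexicographic description of $Q_2$, and then deduce $\hat{h}_+$-invariance from the fact that $\hat{h}_+$ is an order-preserving bijection of $\calB$. For the axioms, note that any nonzero $\Homv\in H_1(\hat{T};\ZZ)$ is a finite integer combination of basis elements, so $\max(\Homv)$ and $k(\Homv)$ are well-defined. For closure under addition: given $\Homv,\Homw\in Q_2$, compare $\max(\Homv)$ and $\max(\Homw)$ under $<_e$. If they are distinct, the larger one is $\max(\Homv+\Homw)$ and its coefficient is unchanged, hence positive. If they are equal, the coefficient of that common top basis element in $\Homv+\Homw$ is $k(\Homv)+k(\Homw)>0$, and it is still the maximum among the nonzero coefficients (any cancellation can only occur at or below that level, and the top coefficient does not cancel). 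Either way $\Homv+\Homw\in Q_2$, giving \refdef{PositiveCone}\refitm{Closed}. For the partition axiom: for nonzero $\Homv$, $\max(\Homv)=\max(-\Homv)$ and $k(-\Homv)=-k(\Homv)$, so exactly one of $\Homv,-\Homv$ lies in $Q_2$; together with $0\notin Q_2$ this gives $H_1(\hat{T};\ZZ)=Q_2\sqcup Q_2^{-1}\sqcup\{0\}$, i.e. \refdef{PositiveCone}\refitm{Partition}. The conjugation-invariance axiom \refitm{Invariant} is automatic since $H_1(\hat{T};\ZZ)$ is abelian.

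For the invariance claim, recall from \reflem{ActionOfHatH+}\refitm{Contained} that $\hat{h}_+$ maps $\calB$ into $\calB$; since $\hat{h}$ is a homeomorphism of $\hat{T}$, its inverse also permutes boundary components, so $\hat{h}_+$ restricts to a bijection of $\calB$. By \reflem{BOrder} this bijection is order-preserving for $<_e$, hence it is an order-isomorphism of $(\calB,<_e)$. Consequently, for any nonzero $\Homv$, writing $\Homv=\sum_\gamma k_\gamma\gamma$ we have $\hat{h}_+(\Homv)=\sum_\gamma k_\gamma\,\hat{h}_+(\gamma)$, and since $\hat{h}_+$ preserves $<_e$ it carries the $<_e$-maximal basis element with nonzero coefficient to the $<_e$-maximal basis element with nonzero coefficient of $\hat{h}_+(\Homv)$; that is, $\max(\hat{h}_+(\Homv))=\hat{h}_+(\max(\Homv))$ and $k(\hat{h}_+(\Homv))=k(\Homv)$. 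Therefore $\Homv\in Q_2$ if and only if $\hat{h}_+(\Homv)\in Q_2$, which is exactly $\hat{h}_+(Q_2)=Q_2$.

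I expect the only real subtlety to be the closure-under-addition step in the equal-maximum case, where one must argue that cancellation among lower terms cannot promote a basis element above the (uncancelled) common top term; this is immediate once one observes that every basis element appearing in $\Homv$ or $\Homw$ is $\le_e \max(\Homv)=\max(\Homw)$, so no new larger term can appear in the sum. Everything else is bookkeeping with the lexicographic order, and the $\hat{h}_+$-invariance is a formal consequence of \reflem{BOrder} together with the bijectivity of $\hat{h}_+$ on $\calB$.
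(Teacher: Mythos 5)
Your proof is correct and follows essentially the same route as the paper's: you verify the three positive-cone axioms directly from the lexicographic definition of $Q_2$, then deduce $\hat{h}_+$-invariance from \reflem{BOrder}. Your explicit remark that $\hat{h}_+$ is an order-preserving bijection of $\calB$ (so that $\max(\hat{h}_+(\Homv))=\hat{h}_+(\max(\Homv))$) is a slightly more careful phrasing of the step the paper compresses, but it is the same idea.
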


\begin{proof}
	We check the three properties in \refdef{PositiveCone}.
	First suppose that $\Homv, \Homw \in Q_2$. 
	If $\max(\Homv) = \max(\Homw)$ then $\max(\Homv+\Homw) = \max(\Homv) =  \max(\Homw)$ and $k(\Homv+\Homw) = k(\Homv) + k(\Homw) > 0$. If not, then breaking symmetry, assume that $\max(\Homv) <_e \max(\Homw)$. In this case, $\max(\Homv+\Homw) = \max(\Homw)$ and $k(\Homv+\Homw) = k(\Homw) > 0$.
	This proves \refitm{Closed}.
	Now suppose that $\Homv \in H_1(\hat{T};\ZZ) - \{0\}$. 
	Note that $\max(-\Homv) = \max(\Homv)$ and $k(-\Homv) = -k(\Homv)$.
	Therefore $\Homv \in Q_2$ if and only if $-\Homv \in Q_2^{-1}$.
	By definition, $0 \notin Q_2 \cup Q_2^{-1}$, so we have \refitm{Partition}.
	Since $H_1(\hat{T};\ZZ)$ is abelian, we have \refitm{Invariant}.
	
	Finally we show that $Q_2$ is invariant under  $\hat{h}_+$.
	Let $\Homv\in Q_2$. Thus $k(\Homv) > 0$.
	For some set of coefficients $\{k_\gamma\}_{\gamma\in\mathcal{B}}$,
	\[
		\Homv=\sum_{\gamma\in\calB} k_\gamma \gamma.
	\]
	Thus,
	\[
		\hat{h}_+(\Homv)=\sum_{\gamma\in\calB} k_\gamma \hat{h}_+(\gamma) .
	\]

	By \reflem{BOrder}, the ordering of elements in $\mathcal{B}$ is invariant under $\hat{h}_+$.
	Therefore, $\max(\Homv)=\max(\hat{h}_+(\Homv))$.
	Thus $k(\hat{h}_+(\Homv)) = k(\Homv) > 0$. 
	Therefore $\hat{h}_+(\Homv)\in Q_2$.
\end{proof}

By the definition of $\hat{T}$, the fundamental group $\pi_1(\hat{T})$ is isomorphic to $G_2$. 
Abelianizing, we see that  $G_2/[G_2,G_2]$ is naturally isomorphic to $H_1(\hat{T},\ZZ)$.
Under this isomorphism, $Q_2$ defines a positive cone of $G_2/[G_2,G_2]$.

\subsection{The remainder}

We are now ready to build a bi-ordering of the free group $G$ that is invariant under the action of $h$.
We require one more piece of data: we choose an arbitrary positive cone of $\pi_1(M)$ (recall that in the statement of \refthm{nonstandard} we assume that $\pi_1(M)$ is bi-orderable). 
We refer to the restriction of this arbitrary positive cone to $[G_2,G_2]$ as $Q_3$.

\subsection{Putting it all together}

We now apply \refcon{BiorderConstructionSchemeFin} to build a bi-ordering of the free group $G$, invariant under the action of $h$, as follows.
Recall that $p_1:G\to\ZZ^2$ is the abelianization map and that $p_2:G_2\to G_2/[G_2,G_2]$ is the quotient map.

\begin{definition}
\label{Def:ConstructedPositiveCone}
Define $P$ to be the set of all $g\in G$ such that either
\begin{enumerate}
    \item $p_1(g)\in Q_1$, or
    \item $g\in G_2$ and $p_2(g)\in Q_2$, or
    \item $g\in Q_3$. \qedhere
\end{enumerate}
\end{definition}

\begin{lemma}
\label{Lem:PPositiveCone}
    $P$ is a positive cone of $G$.
\end{lemma}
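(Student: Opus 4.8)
The plan is to recognize $P$ as the output of \refcon{BiorderConstructionSchemeFin} applied to the nested sequence of characteristic (hence normal) subgroups $G = G_1 \triangleright G_2 \triangleright [G_2,G_2]$, whose consecutive quotients $G/G_2 \cong \ZZ^2$ and $G_2/[G_2,G_2] \cong H_1(\hat T;\ZZ)$ are free abelian (by \refthm{Levi} and the discussion around \refdef{Basis}), and then to prove the assertion by applying \reflem{constructpositivecone} twice. Since \refcon{BiorderConstructionSchemeFin} is only a recipe, the content of the argument is precisely those two applications, so I would present it in that form. Note that for this lemma we do not need the $h$-invariance of $Q_1$ or $Q_2$; only the relevant conjugation-invariance properties are used.

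First I would build a positive cone $P'$ on $G_2$. Apply \reflem{constructpositivecone} to the quotient map $p_2 \from G_2 \to G_2/[G_2,G_2]$, whose kernel is $[G_2,G_2]$, with $Q_2$ (a positive cone by \reflem{OrderInvariant}) on the quotient and $Q_3$ on $[G_2,G_2]$. Here $Q_3$ is the restriction to $[G_2,G_2]$ of the chosen positive cone of $\pi_1(M)$; since $[G_2,G_2]$ is characteristic in $G$ and $G$ is normal in $\pi_1(M)$, the subgroup $[G_2,G_2]$ is normal in $\pi_1(M)$, so $Q_3$ is invariant under conjugation by every element of $\pi_1(M)$, in particular by every element of $G_2$. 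Thus \reflem{constructpositivecone} yields the positive cone
\[
	P' = \{\, g \in G_2 \mid p_2(g) \in Q_2 \text{ or } g \in Q_3 \,\}.
\]

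Next I would apply \reflem{constructpositivecone} a second time, to the abelianization map $p_1 \from G \to \ZZ^2$, whose kernel is $G_2$, with $Q_1$ on $\ZZ^2$ and $P'$ on $G_2$. For this I must verify that $P'$ is invariant under conjugation by every $x \in G$. If $g \in Q_3$ this again follows from normality of $[G_2,G_2]$ in $\pi_1(M)$. If instead $p_2(g) \in Q_2$, write $p_1(x) = m[\alpha] + n[\beta]$ and $p_2(g) = \sum_{\gamma \in \calB} k_\gamma \gamma$; by \reflem{Shift}, $p_2(xgx^{-1}) = \sum_{\gamma \in \calB} k_\gamma s_{(m,n)}(\gamma)$, and since $s_{(m,n)}$ is an order-preserving bijection of $(\calB, <_e)$ by \reflem{ShiftInvariant}, both $\max(\,\cdot\,)$ and its leading coefficient $k(\,\cdot\,)$ are preserved, so $p_2(xgx^{-1}) \in Q_2$. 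Hence $P'$ is invariant under conjugation by all of $G$, and \reflem{constructpositivecone} produces a positive cone
\[
	P'' = \{\, g \in G \mid p_1(g) \in Q_1 \text{ or } g \in P' \,\}
\]
of $G$. Finally I would unwind the definitions — using $Q_3 \subseteq [G_2,G_2] \subseteq G_2$, so that $g \in Q_3$ forces $g \in G_2$ — to see that $P''$ is exactly the set $P$ of \refdef{ConstructedPositiveCone}, which completes the proof.

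I expect the only genuine obstacle to be the verification, in the second application, that $P'$ is invariant under conjugation by all of $G$ and not merely by $G_2$; this is exactly the point at which the shift machinery of \reflem{Shift} and \reflem{ShiftInvariant} is essential, because conjugation by $x \in G$ acts on $G_2/[G_2,G_2] = H_1(\hat T;\ZZ)$ by the shift $s_{p_1(x)}$, and $Q_2$ was constructed precisely so as to be shift-invariant. Everything else is routine bookkeeping with \reflem{constructpositivecone}.
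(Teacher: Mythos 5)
Your proposal is correct and follows essentially the same route as the paper: two applications of \reflem{constructpositivecone}, first to $p_2\from G_2\to G_2/[G_2,G_2]$ and then to $p_1\from G\to\ZZ^2$, with the key step being the verification via \reflem{Shift} and \reflem{ShiftInvariant} that the intermediate positive cone on $G_2$ is invariant under conjugation by all of $G$. Your remark that $[G_2,G_2]$ is normal in $\pi_1(M)$ (characteristic in $G$, normal in $\pi_1(M)$) makes explicit a point the paper leaves implicit, but the argument is the same.
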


\begin{proof}
	The goal is to show that $P$ is a positive cone by repeatedly applying \reflem{constructpositivecone}.
	By definition, $Q_3$ is a positive cone of $[G_2, G_2]$ and is invariant under conjugation.
	We first apply \reflem{constructpositivecone} with $p_2$ as the surjective homomorphism, mapping from $G_2$ to $G_2/[G_2, G_2]$.  
	We obtain that $P \cap G_2$ is a positive cone on $G_2$.
	
	Next, we want to apply \reflem{constructpositivecone} with $p_1$ as the surjective homomorphism, mapping from $G$ to $G/G_2 \cong H_1(T;\ZZ) \cong \ZZ^2$.  
	In order to do so, we need to show that $P \cap G_2$ is invariant under conjugation by the elements of $G$.
	
	Suppose that $g \in G_2 \cap P$ and that $x \in G$.
	There are two cases. 
	If $g \in Q_3$ then again by definition $x g x^{-1}  \in Q_3 \subset G_2 \cap P$.
	Otherwise we have that $g \in G_2$ and $p_2(g) \in Q_2$.
	In this case, suppose that the homology class $p_1(x)$ is $m[\alpha]+n[\beta]\in H_1(T,\ZZ)$,
	and the homology class $p_2(g)$ in $H_1(\hat{T},\ZZ)$ is
	\[
		\sum_{\gamma\in\calB} k_\gamma \gamma.  
	\]
	By \reflem{Shift}, the homology class of $p_2(x g x^{-1})$ in $H_1(\hat{T},\ZZ)$ is 
	\[
		\sum_{\gamma\in\calB} k_\gamma s_{(m,n)}(\gamma).
	\]
		
	\reflem{ShiftInvariant} implies that the maximal element of $\calB$ appearing in the sum for $x^{-1}gx$ is the shift of the maximal element appearing in the sum for $g$. The coefficients for these elements are equal, and therefore have the same sign.
	By \refdef{Q2}, we have that $p_2(g)\in Q_2$ if and only if $p_2(x^{-1}gx)\in Q_2$.
	Thus we have obtained the hypotheses of \reflem{constructpositivecone} and we are done.
\end{proof}

\begin{lemma}
    The positive cone $P$ is invariant under the map induced by the monodromy $h_*$.
\end{lemma}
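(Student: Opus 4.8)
The plan is to check the defining property $h_*(P)=P$ by splitting $G$ along the filtration $G\supset G_2\supset[G_2,G_2]$ that underlies \refdef{ConstructedPositiveCone}. The starting observation is that $h_*$, being an automorphism of the free group $G$, fixes the characteristic subgroups $G_2=[G,G]$ and $[G_2,G_2]$; as a bijection it then maps each of the sets $G\setminus G_2$, $G_2\setminus[G_2,G_2]$ and $[G_2,G_2]$ onto itself, and it suffices to prove $g\in P\iff h_*(g)\in P$ separately on each. Moreover these three sets are exactly the sets on which clauses (1), (2) and (3) of \refdef{ConstructedPositiveCone} can respectively apply, since $p_1$ kills $G_2$, $p_2$ kills $[G_2,G_2]$, and $Q_3\subseteq[G_2,G_2]$; so on each piece membership in $P$ is governed by a single clause, for $g$ and $h_*(g)$ alike.

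On $G\setminus G_2$ the task reduces to $p_1(g)\in Q_1\iff p_1(h_*(g))\in Q_1$, and this is immediate: $p_1$ is the abelianization so $p_1\circ h_*=h_+\circ p_1$, and $Q_1$ was chosen in \refsec{FirstQuotient} to be one of the four positive cones of $\ZZ^2$ with $h_+(Q_1)=Q_1$. On $G_2\setminus[G_2,G_2]$ the task reduces to $p_2(g)\in Q_2\iff p_2(h_*(g))\in Q_2$; here the key input is that the automorphism of $G_2/[G_2,G_2]\cong H_1(\hat T;\ZZ)$ induced by $h_*|_{G_2}$ is precisely $\hat h_+$. This follows from the construction of $\hat h$ in \refdef{hHat}: since $h$ fixes $\partial T$ pointwise, the lift $\hat h$ fixes $\gamma_0$ and in particular the basepoint $\hat x$, so $\hat h$ realizes $h_*|_{G_2}$ on $\pi_1(\hat T,\hat x)=G_2$ and hence $\hat h_+$ on its abelianization; thus $p_2\circ h_*|_{G_2}=\hat h_+\circ p_2$. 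Combined with $\hat h_+(Q_2)=Q_2$ from \reflem{OrderInvariant}, this gives the claim on this piece.

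The case $g\in[G_2,G_2]$ is where the freedom in choosing $Q_3$ pays off. Here $g\in P\iff g\in Q_3$, and $Q_3$ is the restriction to $[G_2,G_2]$ of a positive cone $P_M$ of $\pi_1(M)\cong G\rtimes_{h_*}\ZZ$. Inside $\pi_1(M)$ the automorphism $h_*$ of $G$ is conjugation by $\tau^{\pm1}$ (compare \refrem{Fig8Twists}), and since positive cones are conjugation invariant (\refdef{PositiveCone}\refitm{Invariant}) we get $g\in P_M\iff h_*(g)\in P_M$; intersecting with $[G_2,G_2]$ (which $h_*$ preserves) yields $g\in Q_3\iff h_*(g)\in Q_3$. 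Assembling the three cases gives $h_*(P)=P$. The only step that is more than bookkeeping is the identification of the induced action on $G_2/[G_2,G_2]$ with $\hat h_+$, which is where the covering-space setup of \refsec{G2MaxConvex} enters; the $Q_1$ and $Q_3$ cases are immediate from how those cones were selected.
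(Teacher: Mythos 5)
Your proof is correct and follows essentially the same three-case decomposition as the paper (split along the filtration $G \supset G_2 \supset [G_2,G_2]$, use $h_+(Q_1)=Q_1$, $\hat h_+(Q_2)=Q_2$ via \reflem{OrderInvariant}, and the conjugation-invariance of the positive cone on $\pi_1(M)$ for $Q_3$). You supply somewhat more detail than the paper — in particular making explicit why the induced map on $G_2/[G_2,G_2]$ is $\hat h_+$ and why $Q_3$ is $h_*$-invariant — but the underlying argument is the same.
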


\begin{proof}
	Suppose that $g\in P$.
	There are three possibilities, as given in \refdef{ConstructedPositiveCone}.
	First suppose that $p_1(g)\in Q_1$.
	Since $p_1 \circ h_*(g)=h_+ \circ p_1(g)$ and by definition, $h_+$ preserves $Q_1$, we have that $p_1(h_*(g))\in Q_1$.
	
	Next suppose that $g\in G_2$ and $p_2(g)\in Q_2$.
	Since $G_2$ is characteristic, $h_*(g)\in G_2$.
	The positive cone $Q_2$ is invariant under $\hat{h}_*$ by \reflem{OrderInvariant}, so $\hat{h}_* \circ p_2(g) \in Q_2$.
	Therefore $p_2 \circ h_*(g)=\hat{h}_* \circ p_2(g) \in Q_2$.

	Finally, suppose that $g\in Q_3$.
	Since $Q_3$ is the restriction of a positive cone of $\pi_1(M)$ and $[G_2,G_2]$ is characteristic, $Q_3$ is invariant under $h_*$.
	Thus, $h_*(g)\in Q_3$.
\end{proof}

\begin{lemma}
\label{Lem:G3NotConvex}
	$G_3 = [G_2, G]$  in the lower central series for $G$ is not convex with respect to $P$.
\end{lemma}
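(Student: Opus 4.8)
The plan is to invoke \refprop{CosetOrder}: I will exhibit a single left coset $gG_3 \neq G_3$ that contains both an element of $P$ and an element of $P^{-1}$. Such a coset is neither contained in $P$ nor in $P^{-1}$ and is different from $G_3$, so by \refprop{CosetOrder} the subgroup $G_3$ cannot be convex with respect to $P$.

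First I would pin down the image $p_2(G_3)$ inside $H_1(\hat{T};\ZZ) \cong G_2/[G_2,G_2]$. Since $[G_2,G_2] \subseteq G_3 = [G_2,G]$, the abelian group $p_2(G_3)$ is generated by the $p_2$-images of commutators $[a,y] = a\,y\,a^{-1}\,y^{-1}$ with $a \in G_2$ and $y \in G$. Writing $[a,y] = a\cdot(ya^{-1}y^{-1})$ and applying \reflem{Shift} with $p_2(a) = \sum_\gamma k_\gamma\gamma$, one obtains $p_2([a,y]) = \sum_\gamma k_\gamma\big(\gamma - s_{(m,n)}(\gamma)\big)$, where $(m,n)$ is the homology class of $y$. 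Because each shift $s_{(m,n)}$ permutes $\calB$, the coefficients of such an element sum to zero, so $p_2(G_3)$ is contained in the augmentation-zero subgroup $\{\sum_\gamma k_\gamma\gamma : \sum_\gamma k_\gamma = 0\}$ of $H_1(\hat{T};\ZZ)$. Only this inclusion is needed (equality in fact holds, consistent with $G_2/G_3\cong\ZZ$).

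Next I would construct the coset. Choose $g \in G_2$ with $p_2(g) = \gamma_{0,0}$. The coefficient sum of $\gamma_{0,0}$ is $1\neq 0$, so $p_2(g) \notin p_2(G_3)$, whence $g \notin G_3$ and $gG_3 \neq G_3$; also $\max(\gamma_{0,0}) = \gamma_{0,0}$ with coefficient $1>0$, so $\gamma_{0,0} \in Q_2$ and therefore $g \in P$ by \refdef{ConstructedPositiveCone}. Using that $<_e$ has no largest element, pick $\gamma' \in \calB$ with $\gamma_{0,0} <_e \gamma'$, and pick $x \in G$ with $p_1(x) = q(\gamma')$. Set $c = g\,x\,g^{-1}x^{-1} \in G_3$. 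Feeding $p_2(g^{-1}) = -\gamma_{0,0}$ into \reflem{Shift} gives $p_2(c) = \gamma_{0,0} - s_{q(\gamma')}(\gamma_{0,0}) = \gamma_{0,0} - \gamma'$, hence $p_2(gc) = 2\gamma_{0,0} - \gamma'$. Since $\gamma_{0,0} <_e \gamma'$, the $<_e$-largest basis element appearing in $p_2(gc)$ is $\gamma'$, with coefficient $-1$; equivalently $p_2((gc)^{-1}) = \gamma' - 2\gamma_{0,0}$ has $<_e$-largest term $\gamma'$ with coefficient $+1>0$, so $p_2((gc)^{-1}) \in Q_2$. As $p_2(gc) \neq 0$, the element $gc$ is nontrivial, and \refdef{ConstructedPositiveCone} then yields $(gc)^{-1} \in P$, i.e.\ $gc \in P^{-1}$.

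Finally, $g \in gG_3 \cap P$ and $gc \in gG_3 \cap P^{-1}$ while $gG_3 \neq G_3$, so by \refprop{CosetOrder} the subgroup $G_3$ is not convex with respect to $P$. I expect the one delicate point to be the interplay between the lexicographic order $<_e$ and the shift formula of \reflem{Shift}: the conjugating element $x$ must be chosen so that conjugation pushes the support of $p_2(g)$ strictly past $\gamma_{0,0}$ in the order $<_e$, and one must then confirm that the leading coefficient of $p_2(gc)$ comes out negative. Everything else is routine bookkeeping against \refdef{ConstructedPositiveCone} and \refdef{Q2}.
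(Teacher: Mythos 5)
Your proof is correct and takes essentially the same approach as the paper: you exhibit a nontrivial coset of $G_3$ in $G_2$ meeting both $P$ and $P^{-1}$, certifying nontriviality of the coset via the coefficient-sum (winding number) argument and opposite signs via the leading-coefficient structure of $Q_2$, then invoke \refprop{CosetOrder}. The only difference is the choice of witnesses --- the paper uses the symmetric pair $y=\gamma_0^2\gamma_1^{-1}$, $z=\gamma_0^{-1}\gamma_1^2$ (with $\gamma_1=\beta\alpha^{-1}\beta^{-1}\alpha$) to avoid a case split on the $<_e$-ordering of two fixed boundary classes, while you instead pick $\gamma'>_e\gamma_{0,0}$ explicitly and form the commutator $c=[g,x]$ with $p_1(x)=q(\gamma')$, which achieves the same thing.
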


\begin{proof}
We will apply \refprop{CosetOrder}, finding elements $y \in P$ and $z \in P^{-1}$ in the same nontrivial coset of $G_3$.
	
	Let $\gamma_0=\alpha\beta\alpha^{-1}\beta^{-1}$, and let $\gamma_1=\beta\alpha^{-1}\beta^{-1}\alpha \in G_2 = [G,G]$.
	Since $\gamma_0=[\alpha,\gamma_1]\gamma_1$, we have that $\gamma_0$ and $\gamma_1$ are in the same coset $S = \gamma_0 G_3 = \gamma_1 G_3$. (Recall that $G_3$ is normal, so left and right cosets are the same.)
	Consider the elements $y=\gamma_0^2\gamma_1^{-1}$ and $z=\gamma_0^{-1}\gamma_1^2$.
	Since $\gamma_0$ and $\gamma_1$ are in $S$, we have that $\gamma_0 \gamma_1^{-1}$ and $\gamma_0^{-1}\gamma_1$ are in $G_3$. Thus $y$ and $z$ are also in $S$.
	
	By the definition of $\hat{T}$, we have that $G_2\cong\pi_1(\hat{T},\hat{x})$.
	We define the \emph{winding number} of an element $f \in G_2$ by
	\[
		w(f)=\sum_{\gamma \in \calB}k_\gamma
	\]
	where
	\[
		p_2(f) = \sum_{\gamma\in\calB} k_\gamma \gamma.
	\]
	The group $G_3$ is generated by terms $[g,f]$ where $g\in G$ and $f\in G_2$.
	By \reflem{Shift}, we have that $w(g f g^{-1}) = w(f)$, so $w([g,f])=0$.
	Therefore $G_3$ is contained in $\ker(w)$.
	Since $w(\gamma_0)= w(\gamma_1)=1$, we have that $S \neq G_3$.
	
	The homology class $p_2(y)=2 p_2(\gamma_0) - p_2(\gamma_1)$, and the homology class $p_2(z) = -p_2(\gamma_0) + 2 p_2(\gamma_1)$.
	Since $p_2(\gamma_0)$ and $p_2(\gamma_1)$ are in $\mathcal{B}$ and the signs of the coefficients of $p_2(y)$ and $p_2(z)$ are opposite,
	exactly one of the homology classes is in $Q_2$ and the other is in $Q_2^{-1}$.
	Therefore, one of $y$ and $z$ is in $P$ and the other is in $P^{-1}$.
\end{proof}

\begin{proof}[Proof of \refthm{nonstandard}]
\reflem{PPositiveCone} tells us that $P$ is a positive cone. 
From \reflem{G3NotConvex} and \refprop{StandardConvexDef} we deduce that $P$ is not the positive cone of a standard bi-order of $G$.
\end{proof}

\begin{corollary}
\label{Cor:NoMaximalConvex}
Let $P$ be the positive cone given in \refdef{ConstructedPositiveCone}.
Then $G_2$ has no maximal convex subgroup with respect to $P$.
\end{corollary}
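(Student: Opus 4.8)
The plan is to read this off from \refthm{G3Convex} together with \reflem{G3NotConvex}, so almost no new work is needed. First I would record that the hypotheses of \refthm{G3Convex} genuinely apply to $P$: by \reflem{PPositiveCone} and the lemma following it, $P$ is a positive cone of $G$ that is invariant under $h_*$, so by \refprop{BOInvariantOrder} it is the restriction to $G$ of a bi-ordering $<$ of $\pi_1(M)\cong G\rtimes_{h_*}\ZZ$; the induced bi-ordering $<_G$ on $G$ has positive cone $P$, and the bi-ordering $<_2$ it induces on $G_2$ has positive cone $P\cap G_2$. This is exactly the setup of \refthm{G3Convex}, with the $h_*$-invariance of $P$ playing the role that makes the monodromy preserve $<_2$.

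Next I would argue by contradiction. Suppose $G_2$ has a maximal convex subgroup $C$ with respect to $<_2=<|_{G_2}$. Then \refthm{G3Convex} forces $C=G_3$, so in particular $G_3$ would be convex in $(G_2,<_2)$. To contradict this I would reuse the explicit data from the proof of \reflem{G3NotConvex}: with $\gamma_0=\alpha\beta\alpha^{-1}\beta^{-1}$ and $\gamma_1=\beta\alpha^{-1}\beta^{-1}\alpha$, the elements $y=\gamma_0^2\gamma_1^{-1}$ and $z=\gamma_0^{-1}\gamma_1^2$ both lie in $G_2$ (since $\gamma_0,\gamma_1\in G_2$), they lie in a common coset $S=\gamma_0 G_3=\gamma_1 G_3$ of $G_3$ inside $G_2$, one of $y,z$ is in $P$ and the other in $P^{-1}$, and $S\neq G_3$ (established there via the winding-number homomorphism). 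Hence, viewed inside $G_2$, the coset $\gamma_0 G_3$ is not contained in $P\cap G_2$, not contained in $(P\cap G_2)^{-1}$, and not equal to $G_3$; by \refprop{CosetOrder} applied to $(G_2,<_2)$ this is incompatible with $G_3$ being convex in $G_2$. This contradiction shows no maximal convex subgroup of $G_2$ can exist.

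I do not expect any real obstacle here — the statement is a direct corollary — and the only points requiring a little care are the two bookkeeping checks above: verifying that $<_2$ is honestly induced by a bi-ordering of $\pi_1(M)$ (so that \refthm{G3Convex} is available), and observing that the non-convexity witnesses from \reflem{G3NotConvex} already lie in $G_2$, so that the failure of convexity is a failure of convexity of $G_3$ \emph{as a subgroup of $G_2$}, not merely as a subgroup of $G$.
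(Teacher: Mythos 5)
Your proof is correct and follows the same route as the paper's stated proof, namely combining \reflem{G3NotConvex} with \refthm{G3Convex}; the extra care you take to note that the non-convexity witnesses $y,z$ of \reflem{G3NotConvex} already live in $G_2$ (so the failure of convexity is genuinely a failure inside $(G_2,<_2)$) is a detail the paper leaves implicit, and it is worth spelling out as you do. The paper also sketches a second, more explicit proof — exhibiting a strictly increasing chain of convex subgroups $C_\gamma$ of $G_2$ indexed by $\gamma\in\calB$ — but your argument is exactly the one it appeals to first.
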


\begin{proof}
This follows from \reflem{G3NotConvex} and \refthm{G3Convex}.
It can also be seen directly by exhibiting an increasing sequence of convex subgroups. Namely,
for each $\gamma \in \calB$, let 
\[
	C_\gamma = \{ g \in G_2 \mid \max(p_2(g)) = \gamma \} \cup \{ \text{id}_G \}
\]
where $m$ is the function given in \refdef{Q2}.
Each $C_\gamma$ is a convex subgroup, $\gamma <_e \delta$ implies that $C_\gamma \subsetneq C_\delta \subsetneq G_2$, and $\bigcup_{\gamma \in \calB} C_\gamma = G_2$.
\end{proof}

\bibliographystyle{hyperplain}
\bibliography{non-standard_biorders}

\end{document}